\documentclass[12pt]{article}

\usepackage[T1]{fontenc}
\usepackage{avant}
\usepackage[cp1250]{inputenc}
\usepackage{amsmath,amssymb,amsthm}
\usepackage{caption}
\usepackage{geometry}
\usepackage{float}

\usepackage{makeidx}
\usepackage[matrix,arrow,curve]{xy}
\usepackage{boxedminipage}
\usepackage{epic}
\usepackage{longtable}
\usepackage{ifthen}
\usepackage{tabularx}
\usepackage{parskip}
\usepackage{verbatim}

\title{ Standard compact Clifford-Klein forms and Lie algebra decompositions}
\author{Maciej Boche\'nski and Aleksy Tralle}

\begin{document}

\newtheorem{theorem}{Theorem}
\newtheorem{proposition}{Proposition}
\newtheorem{lemma}{Lemma}
\newtheorem{definition}{Definition}
\newtheorem{example}{Example}
\newtheorem{note}{Note}
\newtheorem{corollary}{Corollary}
\newtheorem{remark}{Remark}
\newtheorem{question}{Question}

\maketitle{}
\abstract{We find relations between real root decompositions of Lie triples $(\mathfrak{g},\mathfrak{h},\mathfrak{l})$ corresponding to compact standard Clifford-Klein forms, under the assumption that $(\mathfrak{g},\mathfrak{h},\mathfrak{l})$ is not a Lie algebra decomposition in the sense of Onishchik. This enables us to find new classes of homogeneous spaces $G/H$ of simple real Lie groups which do not admit standard compact Clifford-Klein forms. In particular, we show that proper R-regular subalgebras $\mathfrak{h}$ of simple real Lie algebras $\mathfrak{g}$ never generate homogeneous spaces $G/H$ which admit compact Clifford-Klein forms. }

\section{Introduction}
\subsection{The problem} Let $G$ be a connected real semisimple linear Lie group and $H\subset G$ a closed connected reductive subgroup.
\begin{definition}[T. Kobayashi, \cite{k-k},\cite{kob}]\label{def:standard}
{\rm A homogeneous space $G/H$ admits a {\it standard Clifford-Klein form}, if there exists a closed reductive subgroup $L\subset G$ such that $L$ acts properly and co-compactly on $G/H.$ }
\end{definition}
\noindent This notion was first introduced in \cite{kob}. A list of such spaces was given in \cite{kob1} and \cite{kob-y}. The term "standard" was first used in \cite{k-k}.
 Standard Clifford-Klein forms are determined by triples $(G,H,L)$, where $G$ is a semisimple real linear  Lie group, and $H$ and $L$ are reductive subgroups. The general theory \cite{kob} shows that we can restrict ourselves to considering triples $(\mathfrak{g},\mathfrak{h},\mathfrak{l})$ of semisimple Lie algebras, and we will do that throughout this article.
\begin{theorem}\label{thm:table}
Assume that $\mathfrak{g}$ is absolutely simple, and $\mathfrak{h}$ and $\mathfrak{l}$ are reductive. All triples $(\mathfrak{g},\mathfrak{h},\mathfrak{l})$  listed in Table 1 yield standard compact Clifford-Klein forms. 
\end{theorem}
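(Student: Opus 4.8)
The plan is to verify, row by row through Table~1, the two conditions that make up Definition~\ref{def:standard} — properness and cocompactness — by means of the two standard tools from Kobayashi's theory of proper actions.

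First, for \emph{properness}, I would fix a Cartan decomposition $\mathfrak{g}=\mathfrak{k}\oplus\mathfrak{p}$ together with a maximal abelian subspace $\mathfrak{a}\subset\mathfrak{p}$, and, after conjugating by inner automorphisms, arrange the maximally $\mathbb{R}$-split abelian subspaces $\mathfrak{a}_{\mathfrak{h}}$ and $\mathfrak{a}_{\mathfrak{l}}$ of $\mathfrak{h}$ and $\mathfrak{l}$ to lie inside $\mathfrak{a}$. Kobayashi's properness criterion \cite{kob}, later sharpened by Benoist and by Kobayashi--Kassel, reduces the properness of the $L$-action on $G/H$ to a purely linear-algebraic transversality condition on Cartan projections: the cones $\overline{\mathfrak{a}^{+}}\cap W\!\cdot\!\mathfrak{a}_{\mathfrak{h}}$ and $\overline{\mathfrak{a}^{+}}\cap W\!\cdot\!\mathfrak{a}_{\mathfrak{l}}$, where $W=W(\mathfrak{g},\mathfrak{a})$ is the restricted Weyl group, must meet only at the origin. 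For each triple I would read off $\mathfrak{a}_{\mathfrak{h}}$, $\mathfrak{a}_{\mathfrak{l}}$ and the restricted root data from the classification of real forms and their symmetric spaces, and then check this intersection condition; in the frequent case where one of $\mathfrak{h},\mathfrak{l}$ has real rank one, the condition collapses to verifying that the two split directions are not $W$-conjugate, which is immediate.

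Second, for \emph{cocompactness} I would invoke the dimension criterion. Writing $d(\mathfrak{g})=\dim\mathfrak{g}-\dim\mathfrak{k}$ for the dimension of the associated Riemannian symmetric space, a proper action of $L$ on $G/H$ automatically yields a compact Clifford-Klein form as soon as
\[
d(\mathfrak{h})+d(\mathfrak{l})=d(\mathfrak{g}),
\]
since a cocompact lattice $\Gamma\subset L$ then acts properly discontinuously and cocompactly on $G/H$. Thus for each row it remains to compute the three non-compact dimensions from the tables of real simple Lie algebras and to confirm the additivity identity above; this bookkeeping is mechanical once the real forms of $\mathfrak{h}$ and $\mathfrak{l}$ have been pinned down.

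The genuinely delicate step is the properness verification, not the dimension count. The difficulty is twofold: one must correctly identify the real form and the precise embedding of each subalgebra, so that $\mathfrak{a}_{\mathfrak{h}}$ and $\mathfrak{a}_{\mathfrak{l}}$ are placed accurately inside $\mathfrak{a}$, and one must control the action of the restricted Weyl group $W$ on these subspaces, since a single $W$-conjugacy collision between the two cones would already destroy properness. For those entries of Table~1 that already occur in the lists of \cite{kob1} and \cite{kob-y} I would simply cite those references; the new content is to run the two criteria above on the remaining rows, after which Definition~\ref{def:standard} holds for every triple and the theorem follows.
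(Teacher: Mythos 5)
Your proposal is correct in outline but takes a genuinely different route from the paper. The paper's proof is essentially one line: every triple in Table 1 is a Lie algebra decomposition in the sense of Onishchik, $\mathfrak{g}=\mathfrak{h}+\mathfrak{l}$, and by Onishchik's theorem \cite{on} this is equivalent to the group-level decomposition $G=HL$; hence $L$ acts transitively on $G/H$, giving a diffeomorphism $G/H\cong L/(L\cap H)$ with $L\cap H$ compact, from which properness and cocompactness of the $L$-action follow simultaneously, with no root-system or dimension bookkeeping at all. You instead propose to run Kobayashi's two criteria row by row: properness via the Cartan-projection/Weyl-group condition, and cocompactness via $d(\mathfrak{g})=d(\mathfrak{h})+d(\mathfrak{l})$ (the latter is exactly Theorem 4.7 of \cite{kob}, which the paper quotes as Theorem \ref{koba}). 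This is a legitimate and historically standard verification — it is how such examples were originally checked in \cite{kob}, \cite{kob-y} — and it has the virtue of being the machinery that still applies when $\mathfrak{g}\neq\mathfrak{h}+\mathfrak{l}$, which is precisely the situation the rest of the paper investigates; but for this particular theorem it is much heavier, because the real content then lies in the case-by-case determination of the embeddings $\mathfrak{a}_{\mathfrak{h}},\mathfrak{a}_{\mathfrak{l}}\subset\mathfrak{a}$, the Weyl orbits, and the dimensions, which your write-up describes as a plan but does not actually carry out. Two small corrections if you do execute it: your claim that properness ``collapses to verifying that the two split directions are not $W$-conjugate'' is accurate only when both $\mathfrak{h}$ and $\mathfrak{l}$ have real rank one, whereas in most rows (e.g. $\mathfrak{h}=\mathfrak{so}(3,4n)$, $\mathfrak{so}(7,8)$, $\mathfrak{g}_{2(2)}$) only $\mathfrak{l}$ has rank one, so the correct check is that the $W$-orbit of the line $\mathfrak{a}_{\mathfrak{l}}$ meets the higher-dimensional subspace $\mathfrak{a}_{\mathfrak{h}}$ only at the origin; and there are no ``remaining rows'' needing new arguments — all entries of Table 1 are known Onishchik decompositions, which is why the paper can dispatch the entire table by citing \cite{on}.
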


\begin{center}
 \begin{table}[h]
 \centering
 {\footnotesize
 \begin{tabular}{| c | c | c |}
   \hline
   \multicolumn{3}{|c|}{ \textbf{\textit{Standard Clifford-Klein forms}}} \\
   \hline                        
   $\mathfrak{g}$ & $\mathfrak{h}$ & $\mathfrak{l}$ \\
   \hline
		$\mathfrak{su}(2,2n)$ & $\mathfrak{sp}(1,n)$ & $\mathfrak{su}(1,2n)$ \\
		\hline
		$\mathfrak{su}(2,2n)$ & $\mathfrak{sp}(1,n)$ & $\mathfrak{u}(1,2n)$ \\
		\hline
		$\mathfrak{so}(2,2n)$ & $\mathfrak{so}(1,2n)$ & $\mathfrak{su}(1,n)$ \\
		\hline
		$\mathfrak{so}(2,2n)$ & $\mathfrak{so}(1,2n)$ & $\mathfrak{u}(1,n)$ \\
		\hline
		$\mathfrak{so}(4,4n)$ & $\mathfrak{so}(3,4n)$ & $\mathfrak{sp}(1,n)$ \\
		\hline
		$\mathfrak{so}(4,4n)$ & $\mathfrak{so}(3,4n)$ & $\mathfrak{sp}(1,n)\times\mathfrak{sp}(1)$ \\
		\hline
		$\mathfrak{so}(4,4n)$ & $\mathfrak{so}(3,4n)$ & $\mathfrak{sp}(1,n)\times\mathfrak{so}(2)$ \\
		\hline
		$\mathfrak{so}(4,4n)$ & $\mathfrak{so}(3,4n)$ & $\mathfrak{sp}(1,n)$ \\
		\hline
		$\mathfrak{so}(3,4)$ & $\mathfrak{g}_{2(2)}$ & $\mathfrak{so}(1,4)\times\mathfrak{so}(2)$ \\
		\hline
		$\mathfrak{so}(3,4)$ & $\mathfrak{g}_{2(2)}$ & $\mathfrak{so}(1,4)$ \\
		\hline
		$\mathfrak{so}(8,8)$ & $\mathfrak{so}(7,8)$ & $\mathfrak{so}(1,8)$ \\
		\hline
		$\mathfrak{so}(4,4)$ & $\mathfrak{so}(3,4)$ & $\mathfrak{so}(1,4)\times\mathfrak{so}(3)$ \\
		\hline
		$\mathfrak{so}(4,4)$ & $\mathfrak{so}(3,4)$ & $\mathfrak{so}(1,4)\times\mathfrak{so}(2)$ \\
		\hline
		$\mathfrak{so}(4,4)$ & $\mathfrak{so}(3,4)$ & $\mathfrak{so}(1,4)$ \\
		\hline
 \end{tabular}
 }
 \caption{
 }
 \label{tttab1}
 \end{table}
\end{center}
The proof of this result is well known: all triples in Table 1 are Lie algebra decompositions in the sense of Onishchik: $\mathfrak{g}=\mathfrak{h}+\mathfrak{l}$ \cite{on}. It follows from this work, that $G=H\cdot L$ as well, and that $L$ acts transitively on $G/H$. One obtains a natural diffeomorphism $G/H\cong L/L\cap H$, and since $H\cap L$ is compact, $L$ acts properly and co-compactly on $G/H$.  
It is important to note that these are {\it the only} known examples of compact standard Clifford-Klein forms (with $\mathfrak{g}$ simple and $\mathfrak{h}$ proper and non-compact). Hence, the main aim of this article is to answer the following  question.
\begin{question}\label{q:non-on} Are there Lie algebra triples $(\mathfrak{g},\mathfrak{h},\mathfrak{l})$ which can yield compact Clifford-Klein forms and which are not  Lie algebra decompositions?
\end{question}
\noindent Our main results establish some properties of the root space decompositions of such hypothetical pairs. In particular, we show that none of the R-regular subalgebras $\mathfrak{h}$ can yield a  non-Onishchik triple $(\mathfrak{g},\mathfrak{h},\mathfrak{l})$ which generates a standard compact Clifford-Klein form. It is well known that all semisimple subalgebras of real simple Lie algebras fall into two classes: R-regular, and S-regular (also in the real case, see \cite{FG}). Thus, we give a complete  answer to Question \ref{q:non-on} for one of these two classes of semisimple subalgebras.    

There are various reasons for studying standard compact Clifford-Klein forms. One of them comes from  spectral analysis on pseudo-Riemannian manifolds \cite{k-k}, \cite{k-k1}. In this area, one is interested in constructing  quotients $M_{\Gamma}=\Gamma \backslash M$ of discrete groups of isometries of a pseudo-Riemannian homogeneous  manifold $M=G/H$. However, it may happen that $M$ does not admit infinite discontinuous groups of isometries (the Calabi-Markus phenomenon).  On the other hand, there exist examples of manifolds $G/H$ admitting large discontinuous groups such that $M_{\Gamma}$ is compact, or of finite volume. In this case, we will call $M_{\Gamma}$ a {\it compact Clifford-Klein form}, or a Clifford-Klein form of finite volume. Thus, it is a challenging problem to understand when do such compact quotients exist. Suppose a Lie subgroup $L\subset G$ acts properly on $G/H$. Then the action of any discrete subgroup $\Gamma$ of $L$  is automatically properly discontinuous, and this action is free whenever $\Gamma$ is torsion-free. Moreover, if $\Gamma$ is co-compact in $L$, and $L$ acts co-compactly on $G/H$, then the quotient $\Gamma\backslash G/H$ is compact. A quotient $\Gamma\backslash G/H$ is called standard if $\Gamma$ is contained in a reductive subgroup $L$ acting properly on $G/H$. Thus, the first question which arises is to find standard quotients, which is equivalent to finding triples $(G,H,L)$ as in Definition \ref{def:standard}. Partial results in this direction were obtained in \cite{bjt} and \cite{tojo}. 

One of the most challenging problems in this area of research is a problem of T. Kobayashi: {\it does  any homogeneous space $G/H$ admitting compact Clifford-Klein form admits also a standard one?} Clearly, one of the ways to attack it is to understand all standard Clifford-Klein forms.

In a more geometric vein, if $\Gamma\subset G$ acts freely and co-compactly on a  homogeneous space $G/H$ with non-compact $G$ and $H$, any $G$-invariant geometric structure descends onto the quotient, which yields compact   manifolds with such structures. The topology of such manifolds can be understood, and therefore, these manifolds can serve as testing examples. For instance, we mention \cite{bt}.

In the context of this work, one should mention that there are many non-existence results for compact Clifford-Klein forms. For example, there are no standard Clifford-Klein forms of exceptional Lie groups \cite{bjt}. Topological obstructions to compact Clifford-Klein forms were found in \cite{Mo},\cite{M1}, \cite{T} together with examples. A procedure of constructing homogeneous spaces with no compact Clifford-Klein forms was obtained in \cite{bjst}. On the positive side, all compact standard Clifford-Klein of symmetric spaces were classified in \cite{tojo}.

\subsection{Results}
In order to formulate our results we need some notation. The details are described separately in Section \ref{sec:prelim}.
\vskip6pt
Assume that $(\mathfrak{g},\mathfrak{h},\mathfrak{l})$ is a standard triple. Consider the complexifications $\mathfrak{g}^c,\mathfrak{h}^c,\mathfrak{l}^c$, and  the compatible Cartan decompositions 
$$\mathfrak{g}=\mathfrak{k}+\mathfrak{p},\mathfrak{h}=\mathfrak{k}_h+\mathfrak{p}_h+\mathfrak{p}_h,\mathfrak{l}=\mathfrak{k}_l+\mathfrak{p}_l.$$
Denote by $\theta$ the corresponding Cartan involution and  by $\tau$ the complex conjugation of $\mathfrak{g}^c$ with respect to the compact real form $\mathfrak{g}_u=\mathfrak{k}+i\mathfrak{p}$. Let $\mathfrak{a}\subset\mathfrak{p}$ be a maximal abelian subspace and $\mathfrak{m}_0$ be a centralizer of $\mathfrak{a}$ in $\mathfrak{k}$. Choose a maximal Cartan subalgebra $\mathfrak{t}$ in $\mathfrak{m}_0$. Let $\Sigma\subset\mathfrak{a}^*$ be the real root system of $\mathfrak{g}$ determined by $\mathfrak{a} .$ The subalgebra $\mathfrak{j}^c=(\mathfrak{t}+\mathfrak{a})^c$ is a Cartan subalgebra of $\mathfrak{g}^{c}$ so we can take the corresponding root system $\Delta$ of $\mathfrak{g}^c$. Without loss of generality we assume that $\mathfrak{a}_h,\mathfrak{a}_l\subset\mathfrak{a}$ (denote by $\Sigma_h$, $\Sigma_l$ be the real root systems of $\mathfrak{h}$ and $\mathfrak{l}$, respectively).  Let 
$$\mathfrak{g}=\mathfrak{k}+\mathfrak{a}+\mathfrak{n},\mathfrak{h}=\mathfrak{k}_h+\mathfrak{a}_h+\mathfrak{n}_h,\mathfrak{l}=\mathfrak{k}_l+\mathfrak{a}_l+\mathfrak{n}_l$$
be the compatible Iwasawa decompositions. Denote by $B$ the Killing form of $\mathfrak{g} .$ For any $\gamma\in\Sigma$ we denote by $\mathfrak{g}_{\gamma}$ the corresponding root space and by $\mathfrak{g}_{\gamma}^c$ its complexification.
\begin{proposition}\label{prop:iwasawa} Assume that $(G',H',L')$ is a standard triple. Then there is an equivalent standard triple $(G,H,L)$ such that 
$$\mathfrak{n}=\mathfrak{n}_h\oplus\mathfrak{n}_l.$$
\end{proposition}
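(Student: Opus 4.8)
The plan is to pass to a well-chosen equivalent triple and then reduce the statement to two dimension (in)equalities together with the elementary fact that a compactly embedded subalgebra contains no nonzero nilpotent elements. After the normalisation the inclusions $\mathfrak{n}_h,\mathfrak{n}_l\subseteq\mathfrak{n}$ will be automatic, so the whole content is that the sum is direct and exhausts $\mathfrak{n}$.

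First I would normalise the triple, using the freedom in the notion of equivalence set up in Section \ref{sec:prelim} (conjugating $H$ and $L$ in $G$ and adjusting the maximal split subspaces and the positive chamber). The aim is the strong compatibility $\mathfrak{a}_h,\mathfrak{a}_l\subset\mathfrak{a}$ with moreover $\mathfrak{n}_h\subset\mathfrak{n}$ and $\mathfrak{n}_l\subset\mathfrak{n}$. Concretely, after placing $\mathfrak{a}_h$ inside $\mathfrak{a}$ one chooses the positive chamber so that a $\Sigma_h$-regular element $H_0\in\mathfrak{a}_h$ lies in the closed chamber; then for every $\gamma\in\Delta$ occurring in a positive $\mathfrak{h}$-root space one has $\gamma(H_0)=\alpha(H_0)>0$ for the corresponding $\alpha\in\Sigma_h^{+}$, so that root space lies in $\mathfrak{n}$, giving $\mathfrak{n}_h\subset\mathfrak{n}$, and symmetrically for $\mathfrak{l}$.

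Next I would prove directness. Since $L$ acts properly on $G/H$ and $H,L$ are reductive, the intersection $\mathfrak{h}\cap\mathfrak{l}$ is a compactly embedded subalgebra of $\mathfrak{g}$ \cite{kob}. Any $X\in\mathfrak{n}_h\cap\mathfrak{n}_l\subset\mathfrak{h}\cap\mathfrak{l}$ lies in $\mathfrak{n}$ and is therefore $\mathrm{ad}$-nilpotent in $\mathfrak{g}$; but a nonzero $\mathrm{ad}$-nilpotent element generates a nontrivial unipotent one-parameter subgroup, which cannot be contained in the compact group with Lie algebra $\mathfrak{h}\cap\mathfrak{l}$, so $X=0$. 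The same dichotomy applied to $\mathfrak{a}_h\cap\mathfrak{a}_l\subset\mathfrak{p}\cap(\mathfrak{h}\cap\mathfrak{l})$ gives $\mathfrak{a}_h\cap\mathfrak{a}_l=\{0\}$, hence $\dim\mathfrak{a}_h+\dim\mathfrak{a}_l\le\dim\mathfrak{a}$. Using $\dim\mathfrak{n}_h=\dim\mathfrak{p}_h-\dim\mathfrak{a}_h$ (and likewise for $\mathfrak{l}$ and $\mathfrak{g}$) together with the equality $\dim\mathfrak{p}=\dim\mathfrak{p}_h+\dim\mathfrak{p}_l$ valid for proper cocompact actions \cite{kob}, I obtain
$$\dim\mathfrak{n}_h+\dim\mathfrak{n}_l=\dim\mathfrak{p}-(\dim\mathfrak{a}_h+\dim\mathfrak{a}_l)\ge\dim\mathfrak{p}-\dim\mathfrak{a}=\dim\mathfrak{n}.$$
Combined with $\mathfrak{n}_h\oplus\mathfrak{n}_l\subseteq\mathfrak{n}$ this forces $\dim(\mathfrak{n}_h\oplus\mathfrak{n}_l)=\dim\mathfrak{n}$, i.e. $\mathfrak{n}=\mathfrak{n}_h\oplus\mathfrak{n}_l$ (and, as a byproduct, $\dim\mathfrak{a}=\dim\mathfrak{a}_h+\dim\mathfrak{a}_l$).

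I expect the main obstacle to be the normalisation: producing a single chamber of $\mathfrak{a}$ that simultaneously induces the positive systems placing \emph{both} $\mathfrak{n}_h$ and $\mathfrak{n}_l$ in $\mathfrak{n}$, and checking that this is achievable within the equivalence class of the triple. The equality $\dim\mathfrak{p}=\dim\mathfrak{p}_h+\dim\mathfrak{p}_l$ for proper cocompact actions is the other essential input; everything else is linear algebra and the nilpotent-versus-compact alternative.
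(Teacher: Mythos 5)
The two halves of your argument that you actually carry out are correct, and they parallel the paper's own proof (Lemma \ref{lemma:nh+nl}): properness of the $L$-action makes $H\cap L$ compact, so $\mathfrak{h}\cap\mathfrak{l}$ is the Lie algebra of a compact group and contains no nonzero nilpotent element and no nonzero element of $\mathfrak{a}$, giving $\mathfrak{n}_h\cap\mathfrak{n}_l=\{0\}$ and $\mathfrak{a}_h\cap\mathfrak{a}_l=\{0\}$; and Kobayashi's criterion $d(\mathfrak{g})=d(\mathfrak{h})+d(\mathfrak{l})$ (Theorem \ref{koba}) then closes the dimension count $\dim\mathfrak{n}_h+\dim\mathfrak{n}_l=\dim\mathfrak{p}-(\dim\mathfrak{a}_h+\dim\mathfrak{a}_l)\geq\dim\mathfrak{n}$. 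The paper runs the same count at the group level: after normalisation, $A_hN_h\cap A_lN_l$ is a compact subgroup of the simply connected solvable group $AN$, hence trivial, and $d(G)=\dim(AN)$ forces $\mathfrak{a}+\mathfrak{n}=(\mathfrak{a}_h\oplus\mathfrak{n}_h)\oplus(\mathfrak{a}_l\oplus\mathfrak{n}_l)$.

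The genuine gap is exactly the step you flag and leave open: the normalisation placing \emph{both} $\mathfrak{n}_h$ and $\mathfrak{n}_l$ inside the same $\mathfrak{n}$. Your chamber argument works for one subalgebra, but ``symmetrically for $\mathfrak{l}$'' is not available once the chamber of $\mathfrak{a}$ has been spent on $\mathfrak{h}$; without the simultaneous inclusions the final count proves nothing, so as written the proof is incomplete. The gap is fillable, in either of two ways, and both rest on the fact that the equivalence of triples (Proposition \ref{prop:isomorphism}) allows $\mathfrak{h}$ and $\mathfrak{l}$ to be conjugated \emph{independently}. Within your route: fix the chamber $C$ adapted to $\mathfrak{h}$; a $\Sigma_l$-regular element of $\mathfrak{a}_l$ lies in the closure of some chamber $C'$, and since the restricted Weyl group $N_K(\mathfrak{a})/Z_K(\mathfrak{a})$ acts transitively on chambers, conjugating $\mathfrak{l}$ by a representative $k\in N_K(\mathfrak{a})$ of the element sending $C'$ to $C$ keeps $\mathfrak{l}$ $\theta$-stable, keeps $\mathfrak{a}_l\subset\mathfrak{a}$, and moves that regular element into $\overline{C}$, after which you define $\Sigma_l^{+}$ by it. The paper instead avoids chambers altogether: by \cite{abis}, any connected solvable subgroup with real eigenvalues is conjugate into $AN$, so $A_hN_h$ and $A_lN_l$ are placed inside $AN$ by two \emph{separate} conjugations by elements of $K$ (which preserve $\theta$-stability), and the inclusions $\mathfrak{a}_h,\mathfrak{a}_l\subset\mathfrak{a}$ are recovered afterwards from $\theta$-stability via $\mathfrak{a}_h\subset(\mathfrak{a}+\mathfrak{n})\cap(\mathfrak{a}+\mathfrak{n}^{-})=\mathfrak{a}$. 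So your ``main obstacle'' dissolves precisely because the two conjugations need not be the same; your proposal stops just short of noticing this, and until that step is supplied it does not constitute a proof.
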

\noindent This Proposition enables us to get relations between real root spaces of $\mathfrak{g},\mathfrak{h}$ and $\mathfrak{l}$. The proof is contained in Subsection \ref{subsec:n=n1+n2}.

Assume that $0\not=X\in\mathfrak{t}$. Define
$$\Delta_m=\{\alpha_c\in\Delta\,|\,\alpha_c|_{\mathfrak{a}}=0\}\setminus\{0\},$$
$$\Delta_m^{+}=\Delta^+\setminus\Delta_m,\Delta_m^{-}=\Delta^{-}\setminus\Delta_m,$$
$$\Delta_p=\{\alpha_c\in\Delta_m^{+}\,|\,\alpha_c(iX)>0\},\,\Delta_n=\{\alpha_c(iX)<0\},$$
$$\Delta_0=\{\alpha_c\in\Delta^+\,|\,\alpha_c(iX)=0\}.$$
\begin{theorem}\label{thm:root-decomp} Let
$$Z=\sum_{\alpha_c\in\Delta_p\cup\Delta_n}\mathfrak{g}_{\alpha_c}\subset\mathfrak{n}^c.$$
Let $\pi:\mathfrak{n}^c=\sum_{\alpha_c\in\Delta_m^+}\mathfrak{g}_{\alpha_c}\rightarrow Z$ be the natural projection, $Z_h=\pi(\mathfrak{n}_h^c), Z_l=\pi(\mathfrak{n}_l^c)$. Assume that $(\mathfrak{g},\mathfrak{h},\mathfrak{l})$ is a standard triple and that there exists a non-zero $X\in\mathfrak{t}$ such that $X\not\in (\mathfrak{h}+\mathfrak{l})$. We may assume that $X$ is $B$-orthogonal to $\mathfrak{h}+\mathfrak{l} .$ Then:
\begin{enumerate}
\item There exists a basis of $Z_h$ of the form
$$S_h^i=x_{\alpha_i}+\sum_{l=1}^ka^i_{k+l}x_{\alpha_{k+l}},a^i_{k+l}\in\mathbb{C},\alpha_i\in\Delta_p,\alpha_{k+l}\in\Delta_n .$$
\item For any $S_h^i\in Z_h$ $\alpha_i|_{\mathfrak{a}_h}\in\Sigma_h$.
\item Each complexified real root space $\mathfrak{h}_{\gamma}^c$ is spanned by vectors of the form
$$S_h^{i_1}+Q_1,...,S_h^{i_s}+Q_s,Q_{s+1},...,Q_{s+w},s+w=\dim\,{h}_{\gamma}^c,$$
where $\alpha_{i_1},...,\alpha_{i_s}$ are all roots from $\Delta_p$ whose restrictions onto $\mathfrak{a}_h$ coincide with $\gamma$, while all $Q_j$ satisfy the conditions
$$Q_j\in\sum_{\alpha_{c}\in\Delta_{0}, \ \alpha_{c}|_{\mathfrak{a}_{h}}=\gamma}\mathfrak{g}_{\alpha_{c}}.$$
\end{enumerate}
Analogous conditions hold for $\mathfrak{l}_{\gamma}^c$.
\end{theorem}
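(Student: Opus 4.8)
The plan is to encode the hypothesis $X\perp\mathfrak{h}+\mathfrak{l}$ into a single Hermitian form on $\mathfrak{n}^c$ whose totally isotropic subspaces are forced to have exactly the graph structure asserted in (1)--(3), and then to recover the refinement by restricted roots from equivariance under $\mathrm{ad}(\mathfrak{a}_h)$. First I would introduce the positive-definite Hermitian inner product $\langle U,V\rangle=-B(U,\tau V)$ on $\mathfrak{g}^c$. Since $X\in\mathfrak{t}\subset\mathfrak{k}\subset\mathfrak{g}_u$, the operator $\mathrm{ad}(iX)$ is self-adjoint for $\langle\,,\rangle$, with the root spaces $\mathfrak{g}_{\alpha_c}$ as eigenspaces and real eigenvalues $\alpha_c(iX)$. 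Restricting to $\mathfrak{n}^c$ this gives the orthogonal decomposition $\mathfrak{n}^c=Z_p\oplus Z_n\oplus K$, where $K=\ker\pi=\sum_{\alpha_c\in\Delta_m^+,\ \alpha_c(iX)=0}\mathfrak{g}_{\alpha_c}$ and $\mathrm{ad}(iX)$ is $>0$ on $Z_p$, $<0$ on $Z_n$, $=0$ on $K$. I then consider the Hermitian form $\phi(U,V)=\langle\mathrm{ad}(iX)U,V\rangle$, whose radical on $\mathfrak{n}^c$ is $K$ and whose induced form $\bar\phi$ on $Z$ is nondegenerate of signature $(\dim Z_p,\dim Z_n)$.

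The central computation is that $\phi$ vanishes identically on $\mathfrak{n}_h^c$ and on $\mathfrak{n}_l^c$. For $U\in\mathfrak{n}_h^c$ one has $\phi(U,U)=-B([iX,U],\tau U)=-B(iX,[U,\tau U])$ by invariance of $B$. Since $\mathfrak{h}$ is real and $\theta$-stable, $\mathfrak{h}^c$ is stable under the conjugation $\sigma$ of $\mathfrak{g}^c$ with respect to $\mathfrak{g}$ and under $\theta$, hence under $\tau=\theta\sigma$; therefore $\tau U\in\mathfrak{h}^c$ and $[U,\tau U]\in\mathfrak{h}^c$, so $X\perp\mathfrak{h}$ gives $\phi(U,U)=0$. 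Polarizing shows $\mathfrak{n}_h^c$ is totally $\phi$-isotropic, and the identical argument with $\mathfrak{l}$ handles $\mathfrak{n}_l^c$. Because $Z$ and $K$ are $\langle\,,\rangle$-orthogonal eigenspaces, isotropy passes to the quotient: $\bar\phi(Z_h,Z_h)=\phi(\mathfrak{n}_h^c,\mathfrak{n}_h^c)=0$ and likewise for $Z_l$, while Proposition \ref{prop:iwasawa} (i.e. $\mathfrak{n}^c=\mathfrak{n}_h^c\oplus\mathfrak{n}_l^c$) yields $Z_h+Z_l=\pi(\mathfrak{n}^c)=Z$.

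Now the linear algebra closes everything. A totally isotropic subspace of the nondegenerate $\bar\phi$ has dimension at most $\min(\dim Z_p,\dim Z_n)$; combined with $\dim Z_h+\dim Z_l\ge\dim Z=\dim Z_p+\dim Z_n$ this forces $\dim Z_p=\dim Z_n=:k$, $\dim Z_h=\dim Z_l=k$, and $Z=Z_h\oplus Z_l$ with $Z_h,Z_l$ Lagrangian. Since $\bar\phi$ is positive definite on $Z_p$ and negative definite on $Z_n$, a Lagrangian meets each trivially, so $\pi_p\colon Z_h\to Z_p$ is an isomorphism; expressing $Z_h$ as the graph of the resulting map $Z_p\to Z_n$ in the root-vector bases $\{x_\alpha\}$ produces precisely the vectors $S_h^i=x_{\alpha_i}+\sum_{l=1}^k a^i_{k+l}x_{\alpha_{k+l}}$ of (1), and symmetrically for $\mathfrak{l}$. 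For (2) and (3) I would use that $X\in\mathfrak{m}_0$ centralizes $\mathfrak{a}\supseteq\mathfrak{a}_h$, so $\mathrm{ad}(iX)$ commutes with $\mathrm{ad}(\mathfrak{a}_h)$ and $\pi,Z_p,Z_n,\pi_p$ are all $\mathrm{ad}(\mathfrak{a}_h)$-equivariant. Decomposing into $\mathfrak{a}_h$-weight spaces sends the $\gamma$-component of $Z_h$ isomorphically onto $\mathrm{span}\{x_\alpha:\alpha\in\Delta_p,\ \alpha|_{\mathfrak{a}_h}=\gamma\}$, giving (2) and the list $S_h^{i_1},\dots,S_h^{i_s}$; lifting back through $\ker(\pi|_{\mathfrak{h}_\gamma^c})=\mathfrak{h}_\gamma^c\cap K$ supplies the correction terms $Q_j$ and the purely-$\Delta_0$ vectors $Q_{s+1},\dots,Q_{s+w}$ lying in $\sum_{\alpha_c\in\Delta_0,\ \alpha_c|_{\mathfrak{a}_h}=\gamma}\mathfrak{g}_{\alpha_c}$.

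I expect the main obstacle to be the vanishing step together with the descent to $Z$: one must check carefully that $\tau$ preserves $\mathfrak{h}^c$ and $\mathfrak{l}^c$ (this uses both reality and $\theta$-stability) and that total isotropy survives projection modulo the radical $K$. Once $\phi$ is constructed, the isotropic-subspace count, the identification of $Z_h,Z_l$ as complementary Lagrangian graphs, and the $\mathrm{ad}(\mathfrak{a}_h)$-equivariant refinement are all routine.
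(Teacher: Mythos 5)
Your proposal is correct and takes essentially the same route as the paper's own proof: your form $\phi(U,V)=\langle \mathrm{ad}(iX)U,V\rangle=-B([iX,U],\tau V)$ is, up to sign, the paper's Hermitian form $\omega$, your isotropy computation for $\mathfrak{n}_h^c,\mathfrak{n}_l^c$ is exactly the paper's key step, and your Lagrangian-graph plus $\mathrm{ad}(\mathfrak{a}_h)$-equivariance arguments reproduce Proposition \ref{prop:zh} and the completion of the proof of parts (1)--(3). The only cosmetic differences are that you deduce $\dim Z_p=\dim Z_n$ from the signature count rather than from $\sigma(\Delta_p)=\Delta_n$ (Lemma \ref{sigma}), and you establish (2)--(3) via the intertwining property of the graph map instead of the paper's contradiction argument re-invoking $\omega$.
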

\noindent This description of the complexified root spaces $\mathfrak{h}_{\gamma}^c$ and $\mathfrak{l}_{\gamma}^c$ yield new classes of homogeneous spaces $G/H$ which do not admit standard compact Clifford-Klein forms. We say that $\mathfrak{h}$ is a regular subalgebra of $\mathfrak{g}$ if $\mathfrak{a}$ normalizes $\mathfrak{h} .$ We say that $\mathfrak{h}$ is a proper regular subalgebra of $\mathfrak{g}$ if $\textrm{rank}_{\mathbb{R}}(\mathfrak{h})<\textrm{rank}_{\mathbb{R}}(\mathfrak{g})$. Analogously to Dynkin \cite{d} we say that a real semisimple subalgebra $\mathfrak{h}\subset\mathfrak{g}$ is a (proper) real R-regular subalgebra, if it is contained in a (proper) regular subalgebra (notice that all semisimple Lie subalgebras in a simple complex Lie algebra also fall into two classes:  R-regular and S-subalgebras, according to Dynkin). Theorem \ref{thm:root-decomp} fully settles the case of real R-regular subalgebras.
\begin{theorem}\label{thm:r-sub} If $\mathfrak{h}$ is a proper real R-regular subalgebra in $\mathfrak{g}$, then no $G/H$ admits a compact Clifford-Klein form.
\end{theorem}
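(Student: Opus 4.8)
The plan is to argue by contradiction, assuming that for some proper real R-regular $\mathfrak{h}\subset\mathfrak{g}$ the space $G/H$ carries a standard compact Clifford-Klein form, i.e. that there is a reductive $\mathfrak{l}$ making $(\mathfrak{g},\mathfrak{h},\mathfrak{l})$ a standard triple (for a general compact Clifford-Klein form one first reduces to a standard triple as in the introductory discussion). First I would extract the numerical consequences of standardness. By Proposition \ref{prop:iwasawa} we may assume $\mathfrak{n}=\mathfrak{n}_h\oplus\mathfrak{n}_l$, so $\dim\mathfrak{n}=\dim\mathfrak{n}_h+\dim\mathfrak{n}_l$. Feeding this into the cocompactness condition $\dim\mathfrak{p}_l=\dim\mathfrak{p}-\dim\mathfrak{p}_h$ (additivity of the non-compact dimensions), rewritten through $\dim\mathfrak{p}=\dim\mathfrak{a}+\dim\mathfrak{n}$, collapses to $\dim\mathfrak{a}_l=\dim\mathfrak{a}-\dim\mathfrak{a}_h$; combined with Kobayashi's properness criterion $\mathfrak{a}_h\cap w\mathfrak{a}_l=\{0\}$ for all $w$ in the real Weyl group, this forces the direct sum $\mathfrak{a}=\mathfrak{a}_h\oplus\mathfrak{a}_l$. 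Since $\mathfrak{h}$ is a \emph{proper} regular subalgebra, $\dim\mathfrak{a}_h<\dim\mathfrak{a}$, so $\mathfrak{a}_l\neq\{0\}$.

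Next I would exploit regularity. Because $\mathfrak{a}$ normalizes $\mathfrak{h}$, the subalgebra $\mathfrak{h}^c$ is $\operatorname{ad}(\mathfrak{a})$-invariant and splits into restricted root spaces, $\mathfrak{n}_h^c=\bigoplus_{\gamma\in\Sigma^+}(\mathfrak{h}^c\cap\mathfrak{g}_\gamma^c)$; in particular $\mathfrak{h}^c\cap\mathfrak{g}_\gamma^c=\{0\}$ whenever $\gamma|_{\mathfrak{a}_h}=0$, since $\mathfrak{n}_h$ carries only nonzero $\mathfrak{a}_h$-weights. The projection $\pi$ of Theorem \ref{thm:root-decomp} annihilates exactly the $\operatorname{ad}(\mathfrak{a})$-invariant subspace spanned by the $\mathfrak{g}_{\alpha_c}$ with $\alpha_c(iX)=0$, so $\pi$ is $\operatorname{ad}(\mathfrak{a})$-equivariant; hence $Z_h=\pi(\mathfrak{n}_h^c)$ is again $\operatorname{ad}(\mathfrak{a})$-invariant, a sum of blocks $Z_h^\gamma\subset Z^\gamma$ in which every root restricts to the same $\gamma$.

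I would then produce the auxiliary element $X$ and confront the two parts of the machinery. If $\mathfrak{t}\not\subset\mathfrak{h}+\mathfrak{l}$, then $\mathfrak{t}\cap(\mathfrak{h}+\mathfrak{l})^{\perp}\neq\{0\}$ supplies a nonzero $X\in\mathfrak{t}$ that is $B$-orthogonal to $\mathfrak{h}+\mathfrak{l}$ and lies outside it, so Theorem \ref{thm:root-decomp} applies. The involution $\alpha_c\mapsto-\theta\alpha_c$ preserves $\Delta_m^{+}$ and every restriction class $\gamma$ while reversing the sign of $\alpha_c(iX)$, so it matches $\Delta_p$ with $\Delta_n$ blockwise and gives $|\Delta_p^\gamma|=|\Delta_n^\gamma|$. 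On the other hand, part (1) says each $\operatorname{ad}(\mathfrak{a})$-invariant block $Z_h^\gamma$ is the graph of a map from its $\Delta_p^\gamma$-part to its $\Delta_n^\gamma$-part, hence projects injectively onto the positive side; using the analogous description of $Z_l$ together with $Z=Z_h\oplus Z_l$ (a consequence of $\mathfrak{n}=\mathfrak{n}_h\oplus\mathfrak{n}_l$) and conditions (2)–(3), I would track the blockwise dimensions and the restrictions $\alpha_i|_{\mathfrak{a}_h}\in\Sigma_h$ to derive an incompatibility with $\mathfrak{a}=\mathfrak{a}_h\oplus\mathfrak{a}_l$. In the remaining case $\mathfrak{t}\subset\mathfrak{h}+\mathfrak{l}$, combining this with $\mathfrak{a}+\mathfrak{n}+\theta\mathfrak{n}\subset\mathfrak{h}+\mathfrak{l}$ (using $\theta$-stability of $\mathfrak{h},\mathfrak{l}$) drives the triple toward the equality $\mathfrak{g}=\mathfrak{h}+\mathfrak{l}$, an Onishchik decomposition; but no factor in Onishchik's classification of factorizations of a simple Lie algebra \cite{on} is a regular subalgebra, which excludes this case.

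The main obstacle I anticipate is twofold and lies in the last paragraph. First, guaranteeing a usable $X$: for a proper regular $\mathfrak{h}$ the missing rank sits in $\mathfrak{a}_l\subset\mathfrak{a}$, not visibly in $\mathfrak{t}$, so the dichotomy on $\mathfrak{t}\subset\mathfrak{h}+\mathfrak{l}$ and the clean handling of the equality case — reducing it to Onishchik's list and the fact that its factors are S-subalgebras — must be argued carefully, especially for non-quasi-split $\mathfrak{g}$ where $\mathfrak{m}_0$ is nonabelian and $\mathfrak{t}\subset\mathfrak{h}+\mathfrak{l}$ does not at once give $\mathfrak{m}_0\subset\mathfrak{h}+\mathfrak{l}$. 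Second, converting the graph description of $Z_h$ in Theorem \ref{thm:root-decomp}(1) — which is a priori \emph{compatible} with regularity, since $\mathfrak{h}^c\cap\mathfrak{g}_\gamma^c$ need only be $\operatorname{ad}(\mathfrak{a})$-homogeneous and not a sum of individual complex root spaces — into a genuine numerical contradiction; this is where conditions (2) and (3) and the exact multiplicities $|\Delta_p^\gamma|$ and $|\Delta_0^\gamma|$ must be used, and it is the technical heart of the argument.
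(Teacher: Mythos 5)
Your proposal stops short of a proof at exactly the two places you flag, and both gaps are real; moreover, the route you sketch for closing them is not the one that works. First, your case distinction is the wrong one. Splitting on whether $\mathfrak{t}\subset\mathfrak{h}+\mathfrak{l}$ leaves the case ``$\mathfrak{t}\subset\mathfrak{h}+\mathfrak{l}$ but $\mathfrak{g}\neq\mathfrak{h}+\mathfrak{l}$'' genuinely open (as you yourself note, a leftover piece of $\mathfrak{m}_0$ does not by itself force a factorization). The paper's Proposition \ref{propx} resolves this differently: if $\mathfrak{g}\neq\mathfrak{h}+\mathfrak{l}$, then since $\mathfrak{a}+\sum_{\alpha\in\Sigma}\mathfrak{g}_{\alpha}\subset\mathfrak{h}+\mathfrak{l}$ (Lemma \ref{lemma:a+n}), any $B$-orthogonal complement vector lies in $\mathfrak{m}_0$, and by compactness of $M_0$ it can be moved into $\mathfrak{t}$ by an $M_0$-conjugation, which preserves goodness of the embeddings (Lemma \ref{ge}). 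So the correct dichotomy is ``$\mathfrak{g}=\mathfrak{h}+\mathfrak{l}$, or (after conjugating the triple) Theorem \ref{thm:root-decomp} applies,'' with no third case; you never invoke the $M_0$-conjugation device, so your second branch cannot be closed as stated.

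Second, and more seriously, the contradiction you aim for in the main case --- a blockwise dimension count against $\mathfrak{a}=\mathfrak{a}_h\oplus\mathfrak{a}_l$ --- is not how R-regularity enters, and you have in fact misused that hypothesis: you write that $\mathfrak{a}$ normalizes $\mathfrak{h}$, but an R-regular subalgebra is only \emph{contained in} a regular subalgebra $\mathfrak{h}'$; $\mathfrak{a}$ need not normalize $\mathfrak{h}$ itself, so $Z_h$ need not be $\operatorname{ad}(\mathfrak{a})$-invariant and your blocks $Z_h^{\gamma}$ need not exist. The consequence of proper R-regularity that the paper actually extracts is the centralizer property: $\mathfrak{h}\subset\mathfrak{h}'\subset\mathfrak{z}_{\mathfrak{g}}(A)$ for some nonzero $A\in\mathfrak{a}$. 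The contradiction is then with this property, not with rank additivity: by simplicity of $\mathfrak{g}$ some root in $\Delta_p\cup\Delta_0\cup\Delta_n$ is nonzero on $A$, and a separate technical lemma (Lemma \ref{lemz}, proved via nilpotent-orbit theory and Lemma \ref{lemma:root-sum}) upgrades this to a root $\alpha\in\Delta_p\cup\Delta_n$ with $\alpha(A)\neq 0$; then parts (1)--(3) of Theorem \ref{thm:root-decomp} exhibit a vector of some $\mathfrak{h}^c_{\gamma}$ whose leading term is $x_{\alpha}$, and such a vector cannot commute with $A$, contradicting $\mathfrak{h}\subset\mathfrak{z}_{\mathfrak{g}}(A)$. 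Your sketch contains no counterpart of the centralizer step or of Lemma \ref{lemz}, and without them the graph description of $Z_h$ alone yields no numerical incompatibility (as you concede, it is a priori compatible with regularity). Finally, your opening parenthesis --- reducing an arbitrary compact Clifford--Klein form to a standard triple --- is not a known reduction; that is precisely Kobayashi's open problem quoted in the introduction, so the argument (like the paper's own) in fact only rules out \emph{standard} compact forms.
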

The following is also an easy corollary to Theorem \ref{thm:root-decomp}.
\begin{theorem}\label{thm:split} If $\mathfrak{g}$ is a simple split Lie algebra, then $(\mathfrak{g}, \mathfrak{h})$ determines a standard compact Clifford-Klein form if and only if it is contained in Table \ref{tttab1}. Also all standard Clifford-Klein forms determined by triples $(\mathfrak{g},\mathfrak{h},\mathfrak{l})$ with both $\mathfrak{h}, \mathfrak{l}$ split are contained in Table \ref{tttab1}.
\end{theorem}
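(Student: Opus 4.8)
The plan is to deduce Theorem \ref{thm:split} as a corollary of Theorem \ref{thm:root-decomp} together with Theorem \ref{thm:table}. The key simplification in the split case is that when $\mathfrak{g}$ is split, a maximal abelian subspace $\mathfrak{a}\subset\mathfrak{p}$ is already a Cartan subalgebra of $\mathfrak{g}$, so the centralizer $\mathfrak{m}_0$ of $\mathfrak{a}$ in $\mathfrak{k}$ is zero, hence $\mathfrak{t}=0$. The real root system $\Sigma$ coincides with the complex root system $\Delta$ (the restriction map $\Delta\to\Sigma$ is a bijection), and the set $\Delta_m$ of roots vanishing on $\mathfrak{a}$ is empty. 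The plan is to exploit this to show that the hypothesis "$\mathfrak{t}\neq0$" needed to invoke Theorem \ref{thm:root-decomp} forces $\mathfrak{h}+\mathfrak{l}=\mathfrak{g}$, i.e. the triple is an Onishchik decomposition, unless $\mathfrak{t}=0$ and we are already forced into a direct analysis.

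First I would argue that if both $\mathfrak{h}$ and $\mathfrak{l}$ are split, then by Proposition \ref{prop:iwasawa} we may assume $\mathfrak{n}=\mathfrak{n}_h\oplus\mathfrak{n}_l$, and since in the split case $\mathfrak{n}$ is the full sum of positive root spaces $\sum_{\alpha\in\Delta^+}\mathfrak{g}_\alpha$ with each $\mathfrak{g}_\alpha$ one-dimensional, the decomposition $\mathfrak{n}=\mathfrak{n}_h\oplus\mathfrak{n}_l$ means that the positive root system $\Delta^+$ of $\mathfrak{g}$ partitions into the positive roots of $\mathfrak{h}$ and those of $\mathfrak{l}$. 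I would then use a dimension count together with the properness/cocompactness constraints (the Kobayashi condition that $\mathfrak{h}$ and $\mathfrak{l}$ be "complementary" in the relevant sense) to conclude that $\dim\mathfrak{h}+\dim\mathfrak{l}=\dim\mathfrak{g}+\dim(\mathfrak{h}\cap\mathfrak{l})$ with $\mathfrak{h}\cap\mathfrak{l}$ compact, which is exactly the Onishchik condition $\mathfrak{g}=\mathfrak{h}+\mathfrak{l}$. Hence the triple appears in Table \ref{tttab1}, proving the second assertion.

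For the first assertion, concerning an arbitrary reductive $\mathfrak{h}$ (not assumed split) with $\mathfrak{g}$ split, I would proceed by contradiction: suppose $(\mathfrak{g},\mathfrak{h},\mathfrak{l})$ is a standard triple not in Table \ref{tttab1}, so in particular it is not an Onishchik decomposition and $\mathfrak{h}+\mathfrak{l}\subsetneq\mathfrak{g}$. Because $\mathfrak{g}$ is split, $\mathfrak{t}=0$ and so there is no nonzero $X\in\mathfrak{t}$, which seems to block Theorem \ref{thm:root-decomp}. The key step is therefore to show that $\mathfrak{h}+\mathfrak{l}\subsetneq\mathfrak{g}$ combined with $\mathfrak{t}=0$ is already contradictory for a standard triple: I would show that the real-rank condition $\mathrm{rank}_\mathbb{R}(\mathfrak{h})+\mathrm{rank}_\mathbb{R}(\mathfrak{l})\geq\mathrm{rank}_\mathbb{R}(\mathfrak{g})$ (forced by properness via Kobayashi's criterion, since $\mathfrak{a}=\mathfrak{a}_h\oplus\mathfrak{a}_l$ when $\mathfrak{n}=\mathfrak{n}_h\oplus\mathfrak{n}_l$) together with the splitting $\mathfrak{j}^c=\mathfrak{a}^c$ forces the whole Cartan subalgebra, hence $\mathfrak{n}$, hence $\mathfrak{g}$, to be accounted for by $\mathfrak{h}+\mathfrak{l}$.

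The hard part will be the last step: ruling out the possibility that a non-split reductive $\mathfrak{h}$ sits inside a split $\mathfrak{g}$ and still yields a standard compact form outside Table \ref{tttab1}. Here one must carefully combine the root-space description in part (3) of Theorem \ref{thm:root-decomp} (which in the split case degenerates because $\Delta_0$ contains only roots vanishing on all of $\mathfrak{a}$, i.e. none) with the constraint that each complexified real root space $\mathfrak{h}_\gamma^c$ must be spanned by the vectors $S_h^i$ alone, forcing $\mathfrak{h}$'s root spaces to inject into $\mathfrak{g}$'s in a way compatible with the partition of $\Delta^+$. I expect this compatibility, together with the classification already encoded in Theorem \ref{thm:table} and the proof of Theorem \ref{thm:r-sub}, to close the remaining cases, since any such $\mathfrak{h}$ would have to be R-regular and thus excluded by Theorem \ref{thm:r-sub} unless it reproduces one of the tabulated triples.
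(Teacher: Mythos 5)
Your handling of the first assertion is essentially the paper's argument, stated less directly: when $\mathfrak{g}$ is split one has $\mathfrak{m}_0=\{0\}$, hence $\mathfrak{t}=0$, and then Lemma \ref{lemma:a+n} (equivalently Proposition \ref{propx}) immediately yields $\mathfrak{g}=\mathfrak{h}+\mathfrak{l}$, so the triple is an Onishchik decomposition and must lie in Table \ref{tttab1}. (Your sketch gets $\mathfrak{a}$ and $\mathfrak{n}$ inside $\mathfrak{h}+\mathfrak{l}$ but forgets $\mathfrak{n}^-$, which comes from $\theta$-stability of well-embedded subalgebras, Lemma \ref{lemma:nh+nl}; that is a small fixable omission.) However, your last paragraph, which you flag as ``the hard part,'' is both unnecessary and unsound: once $\mathfrak{g}=\mathfrak{h}+\mathfrak{l}$ is forced there is nothing left to close, and the claim that any remaining $\mathfrak{h}$ ``would have to be R-regular'' is false --- a split $\mathfrak{g}$ contains S-subalgebras (e.g.\ a principal $\mathfrak{sl}(2,\mathbb{R})$), so Theorem \ref{thm:r-sub} could not be used to finish the first assertion even if a case were left over.

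The genuine gap is in the second assertion, where $\mathfrak{h},\mathfrak{l}$ are split but $\mathfrak{g}$ is not. Three steps of your argument fail there. First, you invoke ``in the split case $\mathfrak{n}$ is the full sum of positive root spaces, each one-dimensional'': that uses splitness of $\mathfrak{g}$, which is not a hypothesis here; in general $\mathfrak{m}_0\neq 0$, $\mathfrak{t}\neq 0$, and $\dim\mathfrak{g}_\alpha>1$. Second, $\mathfrak{n}=\mathfrak{n}_h\oplus\mathfrak{n}_l$ does \emph{not} partition $\Delta^+$ into roots of $\mathfrak{h}$ and roots of $\mathfrak{l}$: elements of $\mathfrak{n}_h$ are in general sums of root vectors of $\mathfrak{g}$ spread over several root spaces --- this is precisely what the vectors $S_h^i=x_{\alpha_i}+\sum_l a^i_{k+l}x_{\alpha_{k+l}}$ of Theorem \ref{thm:root-decomp}(1) encode. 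Third, and most seriously, the step ``dimension count plus properness/cocompactness $\Rightarrow$ $\dim\mathfrak{h}+\dim\mathfrak{l}=\dim\mathfrak{g}+\dim(\mathfrak{h}\cap\mathfrak{l})$'' has no mechanism behind it: Kobayashi's criterion gives only $d(\mathfrak{g})=d(\mathfrak{h})+d(\mathfrak{l})$, an equality of noncompact dimensions, while the full-dimension identity you assert is \emph{equivalent} to $\mathfrak{g}=\mathfrak{h}+\mathfrak{l}$, i.e.\ to the conclusion; for a hypothetical non-Onishchik standard triple it would simply fail, and nothing in your argument excludes that. The paper's proof of this case is where the real work sits: assuming $\mathfrak{g}\neq\mathfrak{h}+\mathfrak{l}$, Proposition \ref{propx} produces $0\neq X\in\mathfrak{t}$ orthogonal to $\mathfrak{h}+\mathfrak{l}$; one-dimensionality of the root spaces of the split algebras $\mathfrak{h}$ and $\mathfrak{l}$ (not of $\mathfrak{g}$) forces $S_h^i=x_{\alpha_i}+a_i\sigma(x_{\alpha_i})$ and $S_l^i=x_{\alpha_i}+b_i\sigma(x_{\alpha_i})$; isotropy of the Hermitian form $\omega$ gives $|a_i|=|b_i|=1$; and conjugating $\mathfrak{h}$ by $\exp(t_0X)\in M_0$ for suitable $t_0$ (Lemma \ref{lemma:ad-root}) aligns $S_h^1$ with $S_l^1$, so that the images in $Z$ no longer satisfy $Z_h\oplus Z_l=Z$, contradicting Lemma \ref{ge} and Proposition \ref{prop:zh}. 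You would need this argument, or a genuine substitute for it, to prove the second assertion.
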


\section{Preliminaries}\label{sec:prelim}
In this section, we provide more details to the formulation of Theorems \ref{thm:root-decomp},\ref{thm:r-sub} and \ref{thm:split}.
\subsection{Equivalence of triples $(\mathfrak{g},\mathfrak{h},\mathfrak{l})$}
  We consider triples $(G,H,L)$ which yield Clifford-Klein forms up to an equivalence which is given by the following well-known  fact.
\begin{proposition}\label{prop:isomorphism} If a triple $(G,H,L)$ yields a standard compact Clifford-Klein form then so does $(G,g_{1}Hg_{1}^{-1},g_{2}Lg_{2}^{-1})$ for any $g_{1},g_{2}\in G.$
\end{proposition}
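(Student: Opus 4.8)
The plan is to transport the entire Clifford-Klein structure from $(G,H,L)$ to $(G,g_1Hg_1^{-1},g_2Lg_2^{-1})$ by a single explicit equivariant diffeomorphism, so that properness and co-compactness are carried over formally rather than re-derived. As a preliminary remark, conjugation by an element of $G$ is a topological group automorphism preserving both closedness and reductivity, so $g_1Hg_1^{-1}$ and $g_2Lg_2^{-1}$ are again closed reductive subgroups and the target triple is a legitimate candidate; the only content of the statement is that the dynamical properties of the $L$-action survive.

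First I would define the map $F:G/H\to G/(g_1Hg_1^{-1})$ induced by $g\mapsto g_2gg_1^{-1}$, that is $F(gH)=g_2gg_1^{-1}(g_1Hg_1^{-1})$. This is well-defined: replacing $g$ by $gh$ with $h\in H$ alters the chosen representative on the right only by the factor $g_1hg_1^{-1}\in g_1Hg_1^{-1}$. Its inverse is the map induced by $g\mapsto g_2^{-1}gg_1$, so $F$ is a diffeomorphism. The point of combining a left translation by $g_2$ with a right translation by $g_1^{-1}$ is that this single map realizes both conjugations at once.

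Next I would verify equivariance through the conjugation isomorphism $c_{g_2}:L\to g_2Lg_2^{-1}$, $\ell\mapsto g_2\ell g_2^{-1}$. With $L$ acting on $G/H$ by $\ell\cdot gH=\ell gH$, one computes $F(\ell\cdot gH)=g_2\ell gg_1^{-1}(g_1Hg_1^{-1})$, while $c_{g_2}(\ell)\cdot F(gH)=(g_2\ell g_2^{-1})g_2gg_1^{-1}(g_1Hg_1^{-1})=g_2\ell gg_1^{-1}(g_1Hg_1^{-1})$. These agree, so $F$ intertwines the $L$-action on $G/H$ with the $g_2Lg_2^{-1}$-action on $G/(g_1Hg_1^{-1})$ via $c_{g_2}$.

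Finally I would transfer the two properties. Since $F$ is a homeomorphism and $c_{g_2}$ is a topological group isomorphism, the defining map $(\ell,x)\mapsto(\ell\cdot x,x)$ of the $L$-action is conjugate, through $c_{g_2}\times F$ on the source and $F\times F$ on the target, to the corresponding map for the $g_2Lg_2^{-1}$-action; hence properness of the former implies properness of the latter. Likewise $F$ descends to a homeomorphism of orbit spaces $L\backslash(G/H)\cong(g_2Lg_2^{-1})\backslash(G/(g_1Hg_1^{-1}))$, so compactness of one quotient gives compactness of the other, completing the argument. I expect no genuine obstacle, the result being essentially formal; the only point requiring care is the bookkeeping of which translation implements which conjugation, so that the single map $F$ is simultaneously well-defined and equivariant through $c_{g_2}$.
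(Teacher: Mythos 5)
Your proof is correct: the map $F(gH)=g_2gg_1^{-1}(g_1Hg_1^{-1})$ is well-defined and equivariant through $c_{g_2}$, and properness and co-compactness transfer formally, exactly as you argue. Note that the paper states this proposition as a well-known fact and gives no proof at all, so your argument is not an alternative route but simply the standard verification the paper implicitly relies on; it fills in the omitted details correctly.
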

\noindent This enables us to choose embeddings $\mathfrak{h}\hookrightarrow\mathfrak{g}$, $\mathfrak{l}\hookrightarrow\mathfrak{g}$ up to conjugacy in $G$, and we will do this throughout without further notice.
\noindent It is sufficient to consider the condition $G=HL$ on the Lie algebra level, because of the classical result of Onishchik.
\begin{theorem}[\cite{on}, Theorem 3.1]
Let $\mathfrak{g}$ be a reductive real Lie algebra and let $\mathfrak{g}' ,$ $\mathfrak{g}''$ be two reductive subalgebras. Let $G$ be a connected Lie subgroup with Lie algebra $\mathfrak{g}$ and let $G'$ and $G''$ be its connected Lie subgroups corresponding to Lie algebras $\mathfrak{g}'$ and $\mathfrak{g}''$. Then $G=G'G''$ if and only if $\mathfrak{g}=\mathfrak{g}'+\mathfrak{g}''.$
\end{theorem}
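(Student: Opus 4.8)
The plan is to prove the two implications separately, observing that a single differential-geometric computation handles the easy direction and also delivers half of the hard one. Write $\mu\colon G'\times G''\to G$ for the multiplication map $\mu(a,b)=ab$. Its differential at the identity is $d\mu_{(e,e)}(X',X'')=X'+X''$, so that $\operatorname{im} d\mu_{(e,e)}=\mathfrak{g}'+\mathfrak{g}''$; moreover $\mu\circ(L_a\times R_b)=L_a\circ R_b\circ\mu$, and since $L_a\times R_b$ carries $(e,e)$ to $(a,b)$ while $L_a,R_b$ are diffeomorphisms of $G$, the rank of $d\mu$ is constant, equal to $\dim(\mathfrak{g}'+\mathfrak{g}'')$. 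For the direction $G=G'G''\Rightarrow\mathfrak{g}=\mathfrak{g}'+\mathfrak{g}''$, a surjective constant-rank map onto the connected manifold $G$ forces that rank to be $\dim G$ (otherwise the rank theorem would confine the image to a countable union of lower-dimensional submanifolds, hence a null set, contradicting surjectivity), so $\mathfrak{g}'+\mathfrak{g}''=\mathfrak{g}$. Conversely, if $\mathfrak{g}=\mathfrak{g}'+\mathfrak{g}''$ then $\mu$ has constant rank $\dim G$, i.e.\ it is a submersion, so $G'G''=\operatorname{im}\mu$ is open in $G$.

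Once openness is in hand, the entire remaining content of the converse is to show that $G'G''$ is \emph{closed}: then, $G$ being connected, the nonempty clopen set $G'G''$ must be all of $G$. This is precisely where reductivity is indispensable, since an open product $G'G''$ need not be closed for arbitrary subgroups. I would fix a Cartan involution $\theta$ of $\mathfrak{g}$ making both $\mathfrak{g}'$ and $\mathfrak{g}''$ $\theta$-stable — exactly the content of choosing compatible Cartan decompositions $\mathfrak{g}=\mathfrak{k}+\mathfrak{p}$, $\mathfrak{g}'=\mathfrak{k}'+\mathfrak{p}'$, $\mathfrak{g}''=\mathfrak{k}''+\mathfrak{p}''$ — and let $K\supset K',K''$ be the corresponding (connected) maximal compact subgroups. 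Since $\mathfrak{g}'+\mathfrak{g}''$ is then $\theta$-stable and splits into its $\pm1$-eigenspaces, the hypothesis $\mathfrak{g}=\mathfrak{g}'+\mathfrak{g}''$ is equivalent to the pair of conditions $\mathfrak{k}=\mathfrak{k}'+\mathfrak{k}''$ and $\mathfrak{p}=\mathfrak{p}'+\mathfrak{p}''$. Applying the submersion argument above inside the compact connected group $K$ shows that $K'K''$ is open, while it is simultaneously the continuous image of the compact set $K'\times K''$, hence closed; thus $K=K'K''$, so the compact case of the theorem holds for free.

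The main obstacle is the descent from $K=K'K''$ (together with $\mathfrak{p}=\mathfrak{p}'+\mathfrak{p}''$) back to $G=G'G''$. I would recast the goal as transitivity of the $G'$-action on $G/G''$: indeed $G=G'G''$ iff $G'$ acts transitively on $G/G''$, and $G'G''$ is closed iff the orbit $G'\cdot o$ of the base point $o=eG''$ is closed. From $\mathfrak{g}=\mathfrak{g}'+\mathfrak{g}''$ this orbit is open, so it remains only to prove it is closed. Here the reductive structure must enter through Mostow's decomposition: $G/G''$ is a $K$-equivariant vector bundle over the compact core $K/K''$ and deformation retracts onto it, while the $K'$-action is proper. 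Writing a general point as $g=k\exp X$ with $k\in K$, $X\in\mathfrak{p}$, and using $\mathfrak{p}=\mathfrak{p}'+\mathfrak{p}''$ together with $K=K'K''$, one sweeps $gG''$ by elements of $G'$ toward the core, where the compact factorization already exhibits it in the orbit, properness guaranteeing that this limiting process stays inside the closed orbit. I expect the technical heart to be making this sweeping-to-the-core argument rigorous — equivalently, proving directly that the proper, constant-rank map $\mu$ has closed image — and this is the single step that genuinely requires both subgroups to be reductive, rather than merely that $\mathfrak{g}'+\mathfrak{g}''$ exhausts $\mathfrak{g}$.
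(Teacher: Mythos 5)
First, note that the paper contains no proof of this statement: it is quoted verbatim from Onishchik \cite{on}, Theorem 3.1, so your attempt must stand on its own. Your soft steps are correct: the constant-rank computation for $\mu(a,b)=ab$ (giving both the implication $G=G'G''\Rightarrow\mathfrak{g}=\mathfrak{g}'+\mathfrak{g}''$ and the openness of $G'G''$ when $\mathfrak{g}=\mathfrak{g}'+\mathfrak{g}''$), and the compact case, where open plus compact forces $K'K''$ to be clopen in the connected group $K$. But the noncompact converse, which is the entire content of the theorem, has two genuine gaps.

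The first is the normalization ``fix a Cartan involution $\theta$ of $\mathfrak{g}$ making both $\mathfrak{g}'$ and $\mathfrak{g}''$ $\theta$-stable.'' Mostow's theorem adapts $\theta$ to \emph{one} reductive subalgebra; for two simultaneously it fails in general. In $\mathfrak{sl}(2,\mathbb{R})$ the Cartan involution $\theta_q$ attached to a point $q$ of the hyperbolic plane stabilizes the compact subalgebra $\mathfrak{so}(2)_p$ (the stabilizer algebra of $p$) only when $q=p$, so two such subalgebras at distinct points admit no common compatible $\theta$. The only repair is to conjugate $\mathfrak{g}'$ and $\mathfrak{g}''$ \emph{separately} into $\theta$-stable position, and then you must prove that the hypothesis survives, i.e.\ that $\mathfrak{g}=\operatorname{Ad}_{g_1}\mathfrak{g}'+\operatorname{Ad}_{g_2}\mathfrak{g}''$. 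That invariance is essentially equivalent to the theorem itself: granting the theorem, it follows because $G=(g_1G'g_1^{-1})(g_2G''g_2^{-1})$ amounts to a single double coset $G'cG''$ filling $G$, which forces $G'G''=G$; without the theorem it is exactly the unproved point. Assuming it is therefore circular. Note the contrast with the present paper: the analogous normalization for Clifford-Klein triples (Lemma \ref{lemma:nh+nl}, Proposition \ref{prop:good}) is licensed by Proposition \ref{prop:isomorphism}, because the Clifford-Klein property is by definition considered up to independent conjugation of $\mathfrak{h}$ and $\mathfrak{l}$; no such license exists for the factorization problem.

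Second, even granting compatibility, the descent from $K=K'K''$ and $\mathfrak{p}=\mathfrak{p}'+\mathfrak{p}''$ to $G=G'G''$ --- your ``sweeping to the core'' --- is a description of a hope rather than an argument, as you yourself concede. Properness of the $K'$-action is vacuous (every action of a compact group is proper) and gives no control on the closure of the noncompact orbit $G'\cdot o\subset G/G''$; Mostow's fibration of $G/G''$ over $K/K''$ controls the topology of the space, not the $G'$-orbit structure away from the core. Nor can one shortcut closedness by arguing that all $G'$-orbits are open (which would make the open orbit clopen): the rank of the orbit map at $gG''$ is $\dim\bigl(\mathfrak{g}'+\operatorname{Ad}_g\mathfrak{g}''\bigr)$, which may drop off the identity coset, precisely because the sum hypothesis is imposed only at $e$. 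So the substantive core of Onishchik's theorem --- closedness of $G'G''$, equivalently the reduction of reductive factorizations to compact ones --- remains entirely unproved in your outline.
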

\begin{theorem}[\cite{bela}, Corollary 3, see also \cite{kob-d}]
If $G/H$ admits a compact Clifford-Klein form then the center of H is compact.
\end{theorem}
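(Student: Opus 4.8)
The statement is a theorem of Benoist, and the natural route is through Kobayashi's properness criterion phrased in terms of Cartan projections; I would argue by contradiction. Assume $Z(H)$ is non-compact. Since $H$ is reductive and, after a $G$-conjugation, $\theta$-stable, its center splits as a product of a compact torus and a non-trivial $\mathbb{R}$-split torus $S=\exp(\mathfrak{s})$ with $\mathfrak{s}\subset\mathfrak{a}\cap\mathfrak{z}(\mathfrak{h})$; fix $0\neq Z\in\mathfrak{s}$ lying in the closed chamber $\overline{\mathfrak{a}^+}$. Centrality gives $\mathfrak{h}\subseteq\mathfrak{z}_{\mathfrak{g}}(\mathfrak{s})$, so every restricted root occurring in $\mathfrak{h}$ vanishes on $\mathfrak{s}$. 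This is exactly what distinguishes a central split torus from a non-central one: a non-central split torus of a semisimple $\mathfrak{h}$ comes with root spaces $\mathfrak{g}_\alpha\subset\mathfrak{h}$ on which the relevant restricted root is non-zero, whereas here $\mathfrak{h}$ admits no motion transverse to $\mathfrak{z}_{\mathfrak{g}}(\mathfrak{s})$ along the $\mathfrak{s}$-directions. Consequently the Cartan projection $\mu(H)\subset\overline{\mathfrak{a}^+}$ stays within bounded Hausdorff distance of the Weyl-saturation of $\mathfrak{a}_H\supseteq\mathfrak{s}$, and it literally contains the central ray $\{\mu(\exp(tZ))\}_{t\ge 0}$ because $\exp(tZ)\in S\subset H$.

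Let $\Gamma\subset G$ be the discrete group giving the compact Clifford--Klein form (replacing it by a finite-index torsion-free subgroup if needed). The plan has two ingredients. First, properness: by the Kobayashi--Benoist criterion, the action of $\Gamma$ on $G/H$ is proper if and only if for every $R>0$ the set $\{\gamma\in\Gamma:\ d(\mu(\gamma),\mu(H))\le R\}$ is finite, so $\mu(\Gamma)$ must diverge from $\mu(H)$, in particular from the central ray. Second, cocompactness: there is a compact $\Omega\subset G$ with $G=\Gamma\,\Omega\,H$, so applying $\mu$ and using its coarse sub-additivity under products, $\mu(\Gamma)$ must asymptotically cover $\overline{\mathfrak{a}^+}$ up to the contribution of $\mu(H)$ and a bounded error. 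The aim is to combine these along the central direction $Z$: because $\mu(H)$ is confined to $\mathfrak{z}_{\mathfrak{g}}(\mathfrak{s})$, there is no room for $H$ to absorb the piece of $\mu(g)$ transverse to this subspace near the $Z$-flat, which should force a sequence $\gamma_n\to\infty$ with $\mu(\gamma_n)$ staying boundedly close to the central ray $\subset\mu(H)$, contradicting properness. The extreme case $\mathfrak{s}=\mathfrak{a}$ (e.g. $\mathfrak{g}=\mathfrak{sl}(2,\mathbb{R})$, $\mathfrak{h}=\mathfrak{a}$) already makes the mechanism transparent: there $\mu(H)=\overline{\mathfrak{a}^+}$ and no infinite group can act properly at all.

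The main obstacle is precisely this last combination: turning the coarse covering statement from cocompactness into sharp quantitative control of $\mu(\gamma_n)$ relative to $\mu(H)$ along the central flat. This requires (i) the sub-additivity and Lipschitz estimates for the Cartan projection on products $\gamma\,\omega\,h$, together with a choice of test directions $\exp(sY)H$ with $Y$ near $\mathbb{R}Z$ that pins the $H$-part $h_s$ and shows $\mu(\gamma_s)$ cannot escape the $R$-neighbourhood of $\mu(H)$; and (ii) handling that $\Gamma$ need not be reductive, which I would address either through Benoist's limit-cone formalism or by passing to a reductive syndetic hull $L\supseteq\Gamma$ and first proving the statement for the proper cocompact reductive triple $(\mathfrak{g},\mathfrak{h},\mathfrak{l})$. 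An alternative, more topological, line worth recording proceeds via cohomological dimension: a compact form forces $\operatorname{cd}(\Gamma)=d(G)-d(H)$ together with Poincar\'e duality on $M=\Gamma\backslash G/H$, and a non-compact split central torus produces a trivial $K_H$-summand $\mathfrak{s}$ inside $\mathfrak{p}_H$ that degenerates the relevant characteristic-number constraints; since extracting the numerical contradiction there is itself delicate, I would keep the Cartan-projection argument as the primary route.
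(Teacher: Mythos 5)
This statement is not proved in the paper at all: it is imported verbatim from Benoist--Labourie (\cite{bela}, Corollary 3; see also \cite{kob-d}), so the only question is whether your sketch could stand on its own. It cannot, and the gap you yourself flag at the end is not a technicality but an obstruction of principle to the whole route. Both tools you allow yourself --- the Kobayashi--Benoist properness criterion, and the ``coarse covering'' consequence of $G=\Gamma\,\Omega\,H$ extracted through sub-additivity of the Cartan projection --- are statements that depend only on the pair of subsets $\mu(\Gamma),\,\mu(H)\subset\overline{\mathfrak{a}^{+}}$. But for any $\theta$-stable reductive $H$ with $\mathfrak{a}_{H}\subset\mathfrak{a}$ one has exactly $\mu(H)=p(\mathfrak{a}_{H})$, the Weyl folding of $\mathfrak{a}_{H}$, whether the split directions of $\mathfrak{a}_{H}$ are central in $\mathfrak{h}$ or not; centrality is invisible at this level (your remark that $\mathfrak{h}\subset\mathfrak{z}_{\mathfrak{g}}(\mathfrak{s})$ ``confines'' $\mu(H)$ is empty, since $\mathfrak{a}\subset\mathfrak{z}_{\mathfrak{g}}(\mathfrak{s})$ automatically). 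Concretely, take $G=SL(2,\mathbb{R})\times SL(2,\mathbb{R})$, let $H_{1}$ be the diagonally embedded $SL(2,\mathbb{R})$, let $H_{2}$ be the diagonally embedded split torus $\exp(\mathbb{R}X_{0})$, $X_{0}=\mathrm{diag}(1,-1)$, and let $\Gamma$ be a cocompact lattice in the first factor. Then $\mathfrak{a}_{H_{1}}=\mathfrak{a}_{H_{2}}$, so $\mu(H_{1})=\mu(H_{2})$ is the diagonal ray; $\Gamma$ acts properly in both cases; $\Gamma\backslash G/H_{1}\cong\Gamma\backslash SL(2,\mathbb{R})$ is a genuine compact Clifford--Klein form, while $\Gamma\backslash G/H_{2}$ is not compact (as the theorem predicts, $Z(H_{2})=H_{2}$ being non-compact). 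If your mechanism could produce, from properness plus coarse covering, a sequence $\gamma_{n}\to\infty$ with $\mu(\gamma_{n})$ boundedly close to $\mu(H_{2})$, then the identical input data $(\mu(\Gamma),\mu(H_{1}))=(\mu(\Gamma),\mu(H_{2}))$ would produce the same sequence for $H_{1}$, contradicting the properness of an actually existing compact form. Hence no argument carried out purely at the level of Cartan projections can prove this theorem: cocompactness must be used in an exact, not coarse, way.

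This is precisely what the real proofs do. Benoist--Labourie take a non-zero hyperbolic element $Z$ of the center of $\mathfrak{h}$, use $B(Z,\cdot)$ to build a $G$-invariant exact differential form on $G/H$ (coming from the coadjoint orbit through $B(Z,\cdot)$), and contradict compactness of $\Gamma\backslash G/H$ by Stokes' theorem; Kobayashi argues via cohomological dimension and Poincar\'e duality. In other words, the ``alternative, more topological line'' you mention and then set aside is essentially the actual argument, not a secondary fallback. Two further specific flaws in your reduction steps: (i) a ``reductive syndetic hull $L\supseteq\Gamma$'' does not exist in general --- a discrete group acting properly and cocompactly need not lie in any reductive subgroup acting properly, and whether it always does (up to replacing the form) is exactly Kobayashi's standardness problem, quoted as open in the introduction of this paper --- so you may not assume the triple $(\mathfrak{g},\mathfrak{h},\mathfrak{l})$ is reductive; (ii) the claim that a non-central split torus is distinguished from a central one by the presence of root spaces of $\mathfrak{g}$ inside $\mathfrak{h}$ is true but irrelevant to your argument, since it changes $\mathfrak{h}$ but not the set $\mu(H)$ on which your two criteria operate.
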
  
Denote by $K,K_{h},K_{l}$ the maximal compact subgroups of $G,H,L,$ respectively. Let 
$$d(G):=\dim\,G/K,\,d(H):=\dim\,H/K_{h},\,  d(L):=\dim\,L/K_{l}.$$
Clearly, we can write $d(\mathfrak{g}), d(\mathfrak{h}),d(\mathfrak{l}).$
\begin{theorem}[\cite{kob}, Theorem 4.7]
Assume that $L$ acts properly on $G/H.$ The triple $(G,H,L)$ induces a standard compact Clifford-Klein form if and only if 
\begin{equation}
d(G)=d(H)+d(L),\,\text{or, equivalently}\,\,d(\mathfrak{g})=d(\mathfrak{h})+d(\mathfrak{l}).
\label{eq2}
\end{equation}
\label{koba}
\end{theorem}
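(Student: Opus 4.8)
The plan is to reduce the statement to a comparison between the dimension and the cohomological dimension of the double-coset quotient, using the elementary fact that a manifold is compact exactly when these two invariants agree. I would begin by recording the topological shape of $G/H$. Since $H$ is reductive and may be taken $\theta$-stable, the Mostow--Cartan decomposition furnishes a $K$-equivariant diffeomorphism $G/H\cong K\times_{K_h}W$, where $W$ is a vector space of dimension $d(G)-d(H)$. In particular $G/H$ deformation retracts onto the compact manifold $K/K_h$, and
$$\dim(G/H)=\dim(K/K_h)+\big(d(G)-d(H)\big).$$
Thus $K/K_h$ is a compact model for the homotopy type of $G/H$, with top cohomological degree $\dim(K/K_h)$.

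Next I would discretise $L$. By Borel's theorem $L$ contains a torsion-free cocompact lattice $\Gamma$; because $\Gamma\backslash L$ is compact, $L$ acts cocompactly on $G/H$ if and only if $\Gamma\backslash G/H$ is compact. As $\Gamma$ is torsion-free and the $L$-action is proper, every $\Gamma$-stabiliser is finite, hence trivial, so $\Gamma$ acts freely and properly discontinuously. Therefore $\Gamma\backslash G/H$ is a manifold of dimension $\dim(G/H)$ which is homotopy equivalent to the Borel construction $E\Gamma\times_\Gamma(G/H)$.

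The core step is to compute $\mathrm{cd}_{\mathbb{Z}/2}(\Gamma\backslash G/H)$ through the Serre spectral sequence of the fibration $G/H\to E\Gamma\times_\Gamma(G/H)\to B\Gamma$, where I take $B\Gamma=\Gamma\backslash L/K_l$, a closed aspherical manifold of dimension $d(L)=\mathrm{cd}(\Gamma)$. The fibre has the homotopy type of $K/K_h$, so $E_2^{p,q}=H^p(B\Gamma;\mathcal H^q)$ vanishes for $p>d(L)$ and for $q>\dim(K/K_h)$, yielding $\mathrm{cd}_{\mathbb{Z}/2}\le d(L)+\dim(K/K_h)$. With $\mathbb{Z}/2$-coefficients the action on the one-dimensional top fibre cohomology is trivial, so the top corner $E_2^{d(L),\dim(K/K_h)}\cong H^{d(L)}(B\Gamma;\mathbb{Z}/2)\cong\mathbb{Z}/2$ is nonzero and, being the top corner of a first-quadrant spectral sequence, survives to $E_\infty$. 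Hence
$$\mathrm{cd}_{\mathbb{Z}/2}(\Gamma\backslash G/H)=d(L)+\dim(K/K_h).$$

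Finally I would invoke the manifold dichotomy: a connected $n$-manifold $M$ has $\mathrm{cd}_{\mathbb{Z}/2}(M)=n$ when $M$ is compact and $\mathrm{cd}_{\mathbb{Z}/2}(M)\le n-1$ when $M$ is open (for an open manifold $H^n(M;\mathbb{Z}/2)=0$). Comparing with $\dim(\Gamma\backslash G/H)=\dim(K/K_h)+(d(G)-d(H))$, compactness is equivalent to $d(L)+\dim(K/K_h)=\dim(K/K_h)+(d(G)-d(H))$, that is, to $d(G)=d(H)+d(L)$; this simultaneously recovers the inequality $d(H)+d(L)\le d(G)$ forced by properness. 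The step I expect to be the main obstacle is the cohomological-dimension computation: one must verify that the homotopy equivalence $G/H\simeq K/K_h$ feeds correctly into the Borel construction and control the local system on the fibre cohomology, both of which I resolve by working with $\mathbb{Z}/2$-coefficients and locating the surviving class in the top corner. Some care is also needed when $\Gamma\backslash G/H$ is disconnected, where the argument is applied componentwise.
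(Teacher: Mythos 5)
The paper does not actually prove this statement: it is imported verbatim as Theorem 4.7 of Kobayashi \cite{kob}, so there is no in-paper argument to compare against, and the right benchmark is Kobayashi's original proof. Measured against that, your proposal is correct and is essentially a reconstruction of Kobayashi's argument: he likewise discretises $L$ by a torsion-free cocompact lattice $\Gamma$ (Borel's theorem), uses the Mostow decomposition $G/H\cong K\times_{K_h}W$ with $\dim W=d(G)-d(H)$, and reduces cocompactness to a comparison of a cohomological dimension with the dimension of $\Gamma\backslash G/H$, via $\mathrm{cd}(\Gamma)=d(L)$ for $\Gamma$ cocompact in $L$ and the criterion that $\Gamma\backslash G/H$ is compact if and only if $\mathrm{cd}(\Gamma)=d(G)-d(H)$. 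Your variant --- running the Serre spectral sequence of the Borel fibration with $\mathbb{Z}/2$-coefficients and invoking the dichotomy that a connected $n$-manifold $M$ has $H^{n}(M;\mathbb{Z}/2)\neq 0$ exactly when it is compact --- is a clean, self-contained way to get both implications at once; you correctly observe that $\mathbb{Z}/2$-coefficients trivialise the local system on the one-dimensional top fibre cohomology, that the top corner of the first-quadrant spectral sequence survives, and that $\mathrm{cd}\leq\dim$ recovers the inequality $d(H)+d(L)\leq d(G)$. Two points are worth pinning down explicitly: Borel's theorem produces cocompact lattices in semisimple groups, so one should first reduce, as the paper does right after Theorem \ref{koba} (the centers of $H$ and $L$ are compact by \cite{bela}), to the situation where $\Gamma$ can be taken torsion-free and cocompact in $L$, and this same reduction guarantees that $L/K_{l}$ is contractible so that $\Gamma\backslash L/K_{l}$ is a valid closed aspherical model of $B\Gamma$ with $\mathrm{cd}(\Gamma)=d(L)$; and since $G$ and $H$ are connected under the paper's standing assumptions, $\Gamma\backslash G/H$ is connected, so your final componentwise caveat is unnecessary.
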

Therefore the centers of $H$ and $L$ are compact. Let $\mathfrak{h}'\subset \mathfrak{h}$ be a semisimple Lie algebra given by the sum of all  simple ideals of $\mathfrak{h}$ which are of  non-compact type. Let $H'\subset H$ be the closed connected subgroup corresponding to $\mathfrak{h}' . $ We see that $L$ acts properly on $G/H'$ and that $d(G)=d(H')+d(L).$ This way the triple $(G,H',L)$ induces a standard compact Clifford-Klein form.

\subsection{Real root systems and standard Clifford-Klein forms}\label{subsec:n=n1+n2}
Throughout we work with root systems of complex semisimple Lie algebras and with real root systems of their real forms. Also, we use some relations between them, which we reproduce here in order to fix notation (see \cite{ov}, Section 4.1).
 
We may assume that there exists a Cartan involution $\theta$ of $\mathfrak{g}$ such that $\theta (\mathfrak{h}) = \mathfrak{h},$ $\theta (\mathfrak{l})=\mathfrak{l}.$ Hence the restriction of $\theta$ is a Cartan involution of $\mathfrak{h}$ and $\mathfrak{l} .$  We obtain the following Cartan decompositions
$$\mathfrak{g}=\mathfrak{k}+\mathfrak{p}, \  \mathfrak{h}=\mathfrak{k}_{h}+\mathfrak{p}_{h}, \  \mathfrak{l}=\mathfrak{k}_{l}+\mathfrak{p}_{l}, \ \ \mathfrak{k}_{h},\mathfrak{k}_{l}\subset \mathfrak{k}, \ \mathfrak{p}_{h}, \mathfrak{p}_{l}\subset \mathfrak{p}.$$
Denote by $\mathfrak{g}^c, \mathfrak{h}^c, \mathfrak{l}^c$ the complexifications of $\mathfrak{g},\mathfrak{h},\mathfrak{l},$ respectively. Let $\mathfrak{a}\subset \mathfrak{p}$ be a maximal abelian subspace of $\mathfrak{p}$ and denote by $\mathfrak{m}_{0}$ the centralizer of $\mathfrak{a}$ in $\mathfrak{k}.$ Choose a Cartan subalgebra $\mathfrak{t}$ of $\mathfrak{m}_{0}.$ Then $\mathfrak{j}:=\mathfrak{t}+\mathfrak{a}$ is a Cartan subalgebra of $\mathfrak{g}$ and the complexification $\mathfrak{j}^c$ of $\mathfrak{j}$ is a Cartan subalgebra of $\mathfrak{g}^{c}.$  Let  $\Delta$ be a root system of $\mathfrak{g}^c$ with respect to $\mathfrak{j}^c.$  Consider the root decomposition 
$$\mathfrak{g}^c=\mathfrak{j}^c+\sum_{\alpha_{c}\in \Delta} \mathfrak{g}_{\alpha_{c}}.$$
The Lie algebra $\mathfrak{j}:=i\mathfrak{t}+\mathfrak{a}$ is a real form of $\mathfrak{j}^c$ with
$$\mathfrak{j}= \{ A\in \mathfrak{j}^c \ | \ \alpha_{c}(A)\in \mathbb{R} \ \textrm{for any} \ \alpha_{c}\in \Delta  \} .$$
 The weights of the adjoint representation of $\mathfrak{a}$ constitute a root system $\Sigma\subset\mathfrak{a}^*$ (not necessarily reduced). There is a natural map $pr:\Delta\rightarrow\Sigma$ given by $\alpha_c\rightarrow \alpha_c|_{\mathfrak{a}}$. It is easy to see that $pr(\Delta)=\Sigma \cup \{ 0 \} .$ Therefore, we will call $pr(\alpha_c)=\alpha_c|_{\mathfrak{a}}\in\mathfrak{a}^*$ (if non-zero) the restricted roots. Thus,
$\Sigma = \{\alpha := \alpha_{c}|_{\mathfrak{a}} \ | \ \alpha_{c}\in\Delta   \}\setminus\{ 0\} , $
$$\mathfrak{g}_{\alpha} := \sum_{\substack{\alpha_{c}\in \Delta \\ \alpha_{c}|_{\mathfrak{a}} = \alpha}} \mathfrak{g}_{\alpha_{c}} \cap \  \mathfrak{g} .  $$
The reader should keep in mind, that we use the following convention. Roots of complex Lie algebra $\mathfrak{g}^c$ are denoted by $\alpha_c,\beta_c,,...$, the corresponding restrictions are $\alpha,\beta,...$ and so on. Vectors from the root spaces $\mathfrak{g}_{\alpha_c}$ of $\mathfrak{g}^c$ and $\mathfrak{g}_{\alpha}$ from $\mathfrak{g}$ are always denoted by $x_{\alpha}, x_{\beta},...$. The meaning of these will be always clear from the context. However, to avoid clumsy notation, in several cases we write $\alpha_i$ for complex roots. In  each case we state explicitly, which root (complex or its restriction) is used.

We get the following decomposition of $\mathfrak{g}$
$$\mathfrak{g}=\mathfrak{m}_{0}+\mathfrak{a}+\sum_{\alpha \in \Sigma} \mathfrak{g}_{\alpha} .$$
\noindent Let $B$ be the Killing form of $\mathfrak{g}.$ It extends onto $\mathfrak{g}^c$  as a Killing form of $\mathfrak{g}^c$ and we denote it by the same letter.  Consider the compact real form  $\mathfrak{g}_{u}:= \mathfrak{k}+ i\mathfrak{p},$  of $\mathfrak{g}^c ,$ and denote by $\tau$ the complex conjugation in $\mathfrak{g}^c$ with respect to $\mathfrak{g}_{u} .$  Also we denote by $\sigma$ the complex conjugation of $\mathfrak{g}^c$ with respect to $\mathfrak{g}$. It is well known (\cite{ov1}, p. 228) that there is a Hermitian non-degenerate form on $\mathfrak{g}^c$ given by the formula
$B_{\tau}(X,Y)=-B(X,\tau(Y)).$
Choose a set of positive restricted roots $\Sigma^{+}\subset \Sigma.$  There is a system of positive roots $\Delta^{+}\subset \Delta$ such that
$$\Sigma^{+}=\{  \alpha_{c}|_{\mathfrak{a}} \ | \ \alpha_{c}\in \Delta^{+} \} - \{ 0 \} .$$
The subalgebra $\mathfrak{n}:=\sum_{\alpha\in \Sigma^{+}}\mathfrak{g}_{\alpha}$ is a nilpotent subalgebra of $\mathfrak{g}.$ The decomposition
$$\mathfrak{g}=\mathfrak{k}+\mathfrak{a}+\mathfrak{n}$$
is  the Iwasawa decomposition of $\mathfrak{g} .$ Notice that $\mathfrak{a}+ \mathfrak{n} $ is a solvable subalgebra and $\mathfrak{a}$ normalizes $\mathfrak{n} .$ On the Lie group level we have
$G=KAN,$
where $K$ is a maximal compact subgroup of $G$ and $N$ is a maximal unipotent subgroup of $G.$ Analogously we obtain the Iwasawa decompositions for $H$ and $L$
$$\mathfrak{h}=\mathfrak{k}_{h}+\mathfrak{a}_{h}+\mathfrak{n}_{h}, \ \ H=K_{h}A_{h}N_{h},$$
$$\mathfrak{l}=\mathfrak{k}_{l}+\mathfrak{a}_{l}+\mathfrak{n}_{l}, \ \ L=K_{l}A_{l}N_{l}.$$ 
In the same way we get a nilpotent Lie subalgebra of $\mathfrak{g}$ given by the negative restricted roots
$$\mathfrak{n}^{-}= \sum_{\alpha\in\Sigma^{-}}\mathfrak{g}_{\alpha},$$
as well as $\mathfrak{n}_{h}^{-},\mathfrak{n}_{l}^{-}$ for $\mathfrak{h}$ and $\mathfrak{l} .$  Since  $\theta(\mathfrak{g}_{\alpha})=\mathfrak{g}_{-\alpha}$ for all $\alpha\in\Sigma$ (\cite{knapp}, Proposition 5.9) we get $\theta (\mathfrak{n})=\mathfrak{n}^{-}$.
\begin{lemma}\label{lemma:nh+nl}
Under the assumptions of Theorem \ref{thm:root-decomp}, we may assume that
$$\mathfrak{n}=\mathfrak{n}_{h}\oplus \mathfrak{n}_{l} , \ \ \mathfrak{n}^{-}=\mathfrak{n}_{h}^{-}\oplus \mathfrak{n}_{l}^{-}, \ \ \mathfrak{a}=\mathfrak{a}_{h}\oplus \mathfrak{a}_{l},$$
$$\theta (\mathfrak{h})=\mathfrak{h}, \ \ \theta (\mathfrak{l})=\mathfrak{l} .$$
\label{dobro}
\end{lemma}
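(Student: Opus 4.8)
The plan is to derive the whole decomposition from two inputs: properness of the $L$-action, which kills intersections, and Kobayashi's dimension equality \eqref{eq2}, which saturates sums. I will freely use that for a reductive $\theta$-stable $\mathfrak{q}\subseteq\mathfrak{g}$ one has $d(\mathfrak{q})=\dim(\mathfrak{q}\cap\mathfrak{p})=\dim\mathfrak{a}_q+\dim\mathfrak{n}_q$, and that the simultaneous $\theta$-stability $\theta(\mathfrak{h})=\mathfrak{h}$, $\theta(\mathfrak{l})=\mathfrak{l}$ together with $\mathfrak{a}_h,\mathfrak{a}_l\subseteq\mathfrak{a}$ has already been arranged in this subsection, up to the equivalence of Proposition \ref{prop:isomorphism}.

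First I would show $\mathfrak{h}\cap\mathfrak{l}\subseteq\mathfrak{k}$. Since $L$ acts properly on $G/H$, the stabilizer of the base point, which is $H\cap L$, is compact, so its Lie algebra $\mathfrak{h}\cap\mathfrak{l}$ integrates to a relatively compact subgroup. This algebra is $\theta$-stable, and any nonzero $Y\in(\mathfrak{h}\cap\mathfrak{l})\cap\mathfrak{p}$ would generate the unbounded one-parameter subgroup $\exp(tY)\subseteq H\cap L$, a contradiction; hence $\mathfrak{h}\cap\mathfrak{l}\subseteq\mathfrak{k}$. From this the intersection statements follow immediately: $\mathfrak{p}_h\cap\mathfrak{p}_l=0$, and $\mathfrak{a}_h\cap\mathfrak{a}_l\subseteq\mathfrak{a}\cap\mathfrak{k}=0$; moreover $\mathfrak{n}_h\cap\mathfrak{n}_l\subseteq(\mathfrak{h}\cap\mathfrak{l})\cap\mathfrak{n}\subseteq\mathfrak{k}\cap\mathfrak{n}=0$, where $\mathfrak{k}\cap\mathfrak{n}=0$ because $\theta(\mathfrak{n})=\mathfrak{n}^{-}$ and $\mathfrak{n}\cap\mathfrak{n}^{-}=0$.

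The step I expect to be the main obstacle is arranging $\mathfrak{n}_h\subseteq\mathfrak{n}$ and $\mathfrak{n}_l\subseteq\mathfrak{n}$ simultaneously for one positive system of $\mathfrak{g}$, since a single Weyl chamber of $\mathfrak{g}$ must dominate chambers of both $\mathfrak{a}_h$ and $\mathfrak{a}_l$. Here I would use crucially that, by Proposition \ref{prop:isomorphism}, $H$ and $L$ may be conjugated independently. Fix the positive chamber $\overline{\mathfrak{a}^{+}}$ and choose an $\mathfrak{h}$-regular $X_h\in\mathfrak{a}_h$ defining $\mathfrak{n}_h$. A direct weight computation gives $\mathfrak{n}_h\subseteq\bigoplus_{\alpha(X_h)>0}\mathfrak{g}_\alpha$, and if in addition $X_h\in\overline{\mathfrak{a}^{+}}$ then $\alpha(X_h)>0$ forces $\alpha\in\Sigma^{+}$ (otherwise $-\alpha\in\Sigma^{+}$ gives $\alpha(X_h)\le0$), so that $\mathfrak{n}_h\subseteq\mathfrak{n}$. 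Because the $\mathfrak{g}$-chambers tile $\mathfrak{a}$, some $w_h\in W(\mathfrak{g},\mathfrak{a})$, realised in $N_K(\mathfrak{a})$ and hence preserving $\theta$-stability and $\mathfrak{a}_h\subseteq\mathfrak{a}$, sends $X_h$ into $\overline{\mathfrak{a}^{+}}$; replacing $\mathfrak{h}$ by $w_h\cdot\mathfrak{h}$ and, independently, $\mathfrak{l}$ by $w_l\cdot\mathfrak{l}$ with $w_lX_l\in\overline{\mathfrak{a}^{+}}$ yields $\mathfrak{n}_h,\mathfrak{n}_l\subseteq\mathfrak{n}$ at once, without disturbing the intersection relations above (which hold for every standard triple).

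Finally I would finish by a dimension count. Writing $d(\mathfrak{g})=\dim\mathfrak{a}+\dim\mathfrak{n}$ and likewise for $\mathfrak{h}$ and $\mathfrak{l}$, the equality \eqref{eq2} becomes $\dim\mathfrak{a}+\dim\mathfrak{n}=(\dim\mathfrak{a}_h+\dim\mathfrak{a}_l)+(\dim\mathfrak{n}_h+\dim\mathfrak{n}_l)$. The intersection facts give $\dim\mathfrak{a}_h+\dim\mathfrak{a}_l\le\dim\mathfrak{a}$ and, via the containments just arranged, $\dim\mathfrak{n}_h+\dim\mathfrak{n}_l\le\dim\mathfrak{n}$; adding these and comparing with the equality forces both inequalities to be equalities. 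Hence $\mathfrak{a}=\mathfrak{a}_h\oplus\mathfrak{a}_l$ and $\mathfrak{n}=\mathfrak{n}_h\oplus\mathfrak{n}_l$. Applying $\theta$ and using $\theta(\mathfrak{n})=\mathfrak{n}^{-}$, $\theta(\mathfrak{h})=\mathfrak{h}$, $\theta(\mathfrak{l})=\mathfrak{l}$ then gives $\mathfrak{n}^{-}=\mathfrak{n}_h^{-}\oplus\mathfrak{n}_l^{-}$, which completes the proof.
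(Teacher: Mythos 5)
Your proof is correct, and it reaches the crucial containments by a genuinely different route than the paper's. The paper does not presuppose $\mathfrak{a}_h,\mathfrak{a}_l\subset\mathfrak{a}$ inside the proof: it applies the Azad--Biswas conjugacy theorem (\cite{abis}, Proposition 3) to the solvable subgroups $A_hN_h$, $A_lN_l$ to conjugate them into $AN$, uses the Iwasawa decomposition $g=kan$ of the conjugator to replace it by an element of $K$ (so that $\theta$-stability survives), and thereby obtains the block containments $\mathfrak{a}_h+\mathfrak{n}_h,\ \mathfrak{a}_l+\mathfrak{n}_l\subset\mathfrak{a}+\mathfrak{n}$; properness is then exploited at the group level ($A_hN_h\cap A_lN_l$ is a compact subgroup of the simply connected solvable group $AN$, hence trivial), Kobayashi's equality $d(\mathfrak{g})=d(\mathfrak{h})+d(\mathfrak{l})$ forces $\mathfrak{a}+\mathfrak{n}=(\mathfrak{a}_h+\mathfrak{n}_h)\oplus(\mathfrak{a}_l+\mathfrak{n}_l)$, and only at the very end does $\theta$ split off $\mathfrak{a}$, via $\mathfrak{a}_h,\mathfrak{a}_l\subset(\mathfrak{a}+\mathfrak{n})\cap(\mathfrak{a}+\mathfrak{n}^{-})=\mathfrak{a}$. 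You instead take the paper's standing normalizations ($\theta(\mathfrak{h})=\mathfrak{h}$, $\theta(\mathfrak{l})=\mathfrak{l}$, $\mathfrak{a}_h,\mathfrak{a}_l\subset\mathfrak{a}$) as input --- legitimate, since they are arranged before Theorem \ref{thm:root-decomp} and your conjugations preserve them --- and obtain $\mathfrak{n}_h,\mathfrak{n}_l\subset\mathfrak{n}$ by the weight computation together with the little Weyl group acting through representatives in $N_K(\mathfrak{a})$, with properness used at the Lie algebra level ($\mathfrak{h}\cap\mathfrak{l}\subset\mathfrak{k}$) and the dimension count splitting $\mathfrak{a}$ and $\mathfrak{n}$ separately. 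Your version is more self-contained: no appeal to \cite{abis}, only the standard facts that $W=N_K(\mathfrak{a})/Z_K(\mathfrak{a})$ and that $\overline{\mathfrak{a}^{+}}$ is a fundamental domain for $W$ on $\mathfrak{a}$. The paper's version, on the other hand, makes $\mathfrak{a}_h,\mathfrak{a}_l\subset\mathfrak{a}$ an output rather than an input, so the lemma does not lean on the earlier ``without loss of generality'', and it handles the whole solvable block in one stroke. One small ordering caution for your write-up: the relation $\mathfrak{n}_h\cap\mathfrak{n}_l\subset\mathfrak{k}\cap\mathfrak{n}=0$ already uses $\mathfrak{n}_h,\mathfrak{n}_l\subset\mathfrak{n}$, so strictly it belongs after the chamber argument --- which is indeed the only place you use it.
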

\begin{proof}
By assumption  $\theta$ preserves $\mathfrak{h}$ and $\mathfrak{l} . $ It is straightforward that   $Ad_{k}(\mathfrak{k})=\mathfrak{k}$ and $Ad_{k}(\mathfrak{p})=\mathfrak{p}$ for any $k\in K$. Therefore,  
$$\theta (Ad_{k}(\mathfrak{h}))=Ad_{k} (\mathfrak{h}),\,\theta (Ad_{k}(\mathfrak{l}))=Ad_{k}(\mathfrak{l}) .$$ 
If $R$ is a connected solvable subgroup of $G$ consisting of matrices whose all eigenvalues are real, then $R$ is conjugate to a subgroup of $AN$ (\cite{abis} Proposition 3).
It follows that $A_{h}N_{h} \subset gANg^{-1}$ for some $g\in G.$ Let $g=kan$ be the Iwasawa decomposition of $g.$ Since $AN$ is a subgroup of $G$ we get
$$A_{h}N_{h}\subset kanANn^{-1}a^{-1}k^{-1}=kANk^{-1},$$
and so $k^{-1}A_{h}N_{h}k, \tilde{k}A_{l}N_{l}\tilde{k}^{-1}\subset AN,$ for some $k,\tilde{k}\in K.$ Therefore after conjugating $\mathfrak{h},$ $\mathfrak{l}$ by elements of $K$  we obtain 
$$\mathfrak{a}_{h}+\mathfrak{n}_{h}\subset\mathfrak{a}+\mathfrak{n}, \ \ \mathfrak{a}_{l}+\mathfrak{n}_{l}\subset \mathfrak{n} .$$
Since the action of $L$ on $G/H$ is proper, $A_{l}N_{l}\cap A_{h}N_{h}$ is compact. By definition,  $d(G)=\dim (AN)$ which implies
$$\mathfrak{a}+\mathfrak{n}=\mathfrak{a}_{h}\oplus\mathfrak{n}_{h}\oplus \mathfrak{a}_{l}\oplus \mathfrak{n}_{l}.$$
Therefore $\mathfrak{n}_{h},\mathfrak{n}_{l}\subset \mathfrak{n}$ and
$$\theta (\mathfrak{a}_{h}+\mathfrak{n}_{h})\subset \mathfrak{a}+\mathfrak{n}^{-}.$$
Since $\mathfrak{a}_{h},\mathfrak{a}_{l}\subset \mathfrak{a}+\mathfrak{n}\cap \mathfrak{a}+\mathfrak{n}^{-}$, we get $\mathfrak{a}_{h},\mathfrak{a}_{l}\subset \mathfrak{a}.$
\end{proof}
\begin{definition}
{\rm Embeddings $\mathfrak{h}\hookrightarrow\mathfrak{g}, \mathfrak{l} \hookrightarrow \mathfrak{g}$ fulfilling the conditions of Lemma \ref{dobro} will be called  good, and the corresponding subalgebras will be called well embedded.}
\end{definition}
\noindent In this terminology we may formulate the problem of classifying standard compact Clifford-Klein forms as a problem of describing Lie algebra triples given by good embeddings. We will say that  triples of Lie algebras $(\mathfrak{g},\mathfrak{h},\mathfrak{l})$ and $(\mathfrak{g},\mathfrak{h}',\mathfrak{l}')$ are isomorphic, if there exist inner automorphisms $\varphi_1,\varphi_2\in \operatorname{Int}(\mathfrak{g})$ such that $\mathfrak{h}'=\varphi_1(\mathfrak{h})$ and $\mathfrak{l}'=\varphi_2(\mathfrak{l})$.
\begin{proposition}\label{prop:good}
\noindent Let $(\mathfrak{g},\mathfrak{h},\mathfrak{l})$ be a triple of Lie algebras which determines a standard compact semisimple Clifford-Klein form. Then there is an isomorphic  Lie algebra triple $(\mathfrak{g},\mathfrak{h}',\mathfrak{l}')$ such that $\mathfrak{h}'$ and $\mathfrak{l}'$ are well embedded.  
\end{proposition}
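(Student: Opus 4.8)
The plan is to recognize Proposition \ref{prop:good} as the global, automorphism-tracked version of Lemma \ref{dobro}: the latter already exhibits, inside its proof, the very conjugations that turn an arbitrary standard triple into one satisfying the defining conditions of a good embedding, so the only genuinely new point is to verify that these conjugations are inner automorphisms of $\mathfrak{g}$ and that they may be applied to $\mathfrak{h}$ and to $\mathfrak{l}$ independently. Independence is exactly what Proposition \ref{prop:isomorphism} licenses, since conjugating $H$ and $L$ separately by elements $g_1,g_2\in G$ preserves properness and co-compactness; and because $G$ is connected we have $\operatorname{Ad}(G)=\operatorname{Int}(\mathfrak{g})$, so each such conjugation is realized by an element of $\operatorname{Int}(\mathfrak{g})$, as required in the statement.

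First I would fix a Cartan involution $\theta$ of $\mathfrak{g}$, with Cartan decomposition $\mathfrak{g}=\mathfrak{k}+\mathfrak{p}$ and associated maximal compact subgroup $K$. By the standard conjugacy theorem for reductive subalgebras (see \cite{knapp}), there is $\psi_1\in\operatorname{Int}(\mathfrak{g})$ such that $\psi_1(\mathfrak{h})$ is $\theta$-stable and, independently, $\psi_2\in\operatorname{Int}(\mathfrak{g})$ such that $\psi_2(\mathfrak{l})$ is $\theta$-stable. Since the two subalgebras are conjugated separately, Proposition \ref{prop:isomorphism} keeps $(\mathfrak{g},\psi_1(\mathfrak{h}),\psi_2(\mathfrak{l}))$ a standard compact form, and now both subalgebras are preserved by one and the same $\theta$; this supplies the hypothesis $\theta(\mathfrak{h})=\mathfrak{h}$, $\theta(\mathfrak{l})=\mathfrak{l}$ on which Lemma \ref{dobro} is built. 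Next I would run the argument of Lemma \ref{dobro}: by \cite{abis}, Proposition 3, the solvable groups $A_hN_h$ and $A_lN_l$ are each conjugate into $AN$, and via the Iwasawa decomposition of the conjugating elements these conjugations may be taken inside $K$, by some $k,\tilde k\in K$. Since $\operatorname{Ad}_k$ preserves $\mathfrak{k}$ and $\mathfrak{p}$, it commutes with $\theta$ for $k\in K$, so $\theta$-stability is undisturbed, and the two $K$-elements may again be chosen independently. Setting $\varphi_1=\operatorname{Ad}_k\circ\psi_1$ and $\varphi_2=\operatorname{Ad}_{\tilde k}\circ\psi_2$ in $\operatorname{Int}(\mathfrak{g})$, and $\mathfrak{h}'=\varphi_1(\mathfrak{h})$, $\mathfrak{l}'=\varphi_2(\mathfrak{l})$, we obtain $\mathfrak{a}_h+\mathfrak{n}_h$ and $\mathfrak{a}_l+\mathfrak{n}_l$ inside $\mathfrak{a}+\mathfrak{n}$.

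To finish, properness forces $A_hN_h\cap A_lN_l$ to be compact, so the sum $\mathfrak{a}_h+\mathfrak{n}_h+\mathfrak{a}_l+\mathfrak{n}_l$ is direct; combining this with Kobayashi's dimension equality $d(\mathfrak{g})=d(\mathfrak{h})+d(\mathfrak{l})$ from Theorem \ref{koba} and the identity $d(\mathfrak{g})=\dim(\mathfrak{a}+\mathfrak{n})$ shows the direct sum exhausts $\mathfrak{a}+\mathfrak{n}$, whence $\mathfrak{a}+\mathfrak{n}=\mathfrak{a}_h\oplus\mathfrak{n}_h\oplus\mathfrak{a}_l\oplus\mathfrak{n}_l$, and therefore $\mathfrak{a}_h,\mathfrak{a}_l\subset\mathfrak{a}$ and $\mathfrak{n}=\mathfrak{n}_h\oplus\mathfrak{n}_l$; applying $\theta$ yields the remaining identity for $\mathfrak{n}^{-}$, so $\mathfrak{h}',\mathfrak{l}'$ are well embedded. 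The step I expect to demand the most care is precisely the interplay of the two reductions: one must check that $\theta$-adapting $\mathfrak{l}$ does not spoil the $\theta$-stability already arranged for $\mathfrak{h}$, and that the independent $K$-conjugations do not push $\mathfrak{a}_h+\mathfrak{n}_h$ or $\mathfrak{a}_l+\mathfrak{n}_l$ back out of $\mathfrak{a}+\mathfrak{n}$. The resolution is that $\psi_1,k$ act only on $\mathfrak{h}$ and $\psi_2,\tilde k$ only on $\mathfrak{l}$, so each subalgebra's $\theta$-stability is a property it carries independently of the other's conjugation; the single joint requirement, properness, is exactly the one protected by Proposition \ref{prop:isomorphism}, after which only the routine dimension count remains.
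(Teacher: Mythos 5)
Your proposal is correct and takes essentially the same route as the paper: the paper proves Proposition \ref{prop:good} simply by combining Proposition \ref{prop:isomorphism} (which licenses conjugating $\mathfrak{h}$ and $\mathfrak{l}$ independently) with Lemma \ref{dobro}, whose proof is exactly the argument you reproduce --- the \cite{abis} conjugation of $A_hN_h$, $A_lN_l$ into $AN$ by elements of $K$, the properness and dimension count forcing $\mathfrak{a}+\mathfrak{n}=\mathfrak{a}_h\oplus\mathfrak{n}_h\oplus\mathfrak{a}_l\oplus\mathfrak{n}_l$, and the application of $\theta$. The only difference is that you spell out the preliminary reduction (conjugating each subalgebra separately into $\theta$-stable position via the standard conjugacy theorem, and noting that $\operatorname{Ad}_k$ for $k\in K$ commutes with $\theta$), which the paper assumes without comment at the start of Subsection \ref{subsec:n=n1+n2}; this is a useful clarification but not a different method.
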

\begin{proof} This is  a consequence of Lemma \ref{lemma:nh+nl} and Proposition \ref{prop:isomorphism}.
\end{proof}
\noindent Let $M_{0}\subset K$ be a closed connected subgroup corresponding to a reductive subalgebra $\mathfrak{m}_{0}$ (that is, $M_{0}$ is a maximal compact subgroup of the minimal parabolic subgroup $M_{0}AN$ of $G$).
\begin{lemma}
If $\mathfrak{h}, \mathfrak{l} \hookrightarrow \mathfrak{g}$ are good embeddings then for any $m\in M_0$, the embeddings  $Ad_{m}(\mathfrak{h}), Ad_{m}(\mathfrak{l}) \hookrightarrow \mathfrak{g}$ are good. 
\label{ge}
\end{lemma}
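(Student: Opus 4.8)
The plan is to observe that conjugation by an element $m\in M_{0}$ respects every piece of structure entering the definition of a good embedding, and then simply to transport the four conditions of Lemma \ref{dobro} through $Ad_{m}$. First I would record the two relevant properties of $m$. Since $M_{0}\subset K$, the automorphism $Ad_{m}$ preserves the Cartan decomposition $\mathfrak{g}=\mathfrak{k}+\mathfrak{p}$ and commutes with $\theta$: indeed $\theta(m)=m$, so $\theta\circ Ad_{m}=Ad_{\theta(m)}\circ\theta=Ad_{m}\circ\theta$. Since $M_{0}$ centralizes $\mathfrak{a}$, the map $Ad_{m}$ fixes $\mathfrak{a}$ pointwise, and a one-line computation shows it preserves every restricted root space: for $X\in\mathfrak{g}_{\alpha}$ and $A\in\mathfrak{a}$ one has $[A,Ad_{m}X]=Ad_{m}[Ad_{m^{-1}}A,X]=Ad_{m}[A,X]=\alpha(A)\,Ad_{m}X$, because $Ad_{m^{-1}}A=A$. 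Hence $Ad_{m}(\mathfrak{g}_{\alpha})=\mathfrak{g}_{\alpha}$ for all $\alpha\in\Sigma$, and in particular $Ad_{m}(\mathfrak{n})=\mathfrak{n}$, $Ad_{m}(\mathfrak{n}^{-})=\mathfrak{n}^{-}$ and $Ad_{m}(\mathfrak{a})=\mathfrak{a}$.

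Next I would identify the Iwasawa data of the conjugated subalgebras. Writing $\mathfrak{h}'=Ad_{m}(\mathfrak{h})$, the relation $\theta\circ Ad_{m}=Ad_{m}\circ\theta$ together with $\theta(\mathfrak{h})=\mathfrak{h}$ gives $\theta(\mathfrak{h}')=\mathfrak{h}'$, which is the last condition of Lemma \ref{dobro} and yields the compatible Cartan decomposition $\mathfrak{h}'=Ad_{m}(\mathfrak{k}_{h})+Ad_{m}(\mathfrak{p}_{h})$. Because $\mathfrak{a}_{h}\subset\mathfrak{a}$ and $Ad_{m}$ fixes $\mathfrak{a}$ pointwise, $Ad_{m}(\mathfrak{a}_{h})=\mathfrak{a}_{h}$ is again a maximal abelian subspace of $\mathfrak{p}_{h'}$, so we take $\mathfrak{a}_{h'}=\mathfrak{a}_{h}\subset\mathfrak{a}$; likewise $\mathfrak{a}_{l'}=\mathfrak{a}_{l}$, and therefore $\mathfrak{a}_{h'}\oplus\mathfrak{a}_{l'}=\mathfrak{a}$. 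The same centralizing computation, applied to $A\in\mathfrak{a}_{h}$, shows that the restricted root spaces transform as $\mathfrak{h}'_{\gamma}=Ad_{m}(\mathfrak{h}_{\gamma})$ with the restricted root system $\Sigma_{h}$ and its chosen positive system both unchanged, whence $\mathfrak{n}_{h'}=Ad_{m}(\mathfrak{n}_{h})$ and, identically, $\mathfrak{n}_{l'}=Ad_{m}(\mathfrak{n}_{l})$.

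Finally I would transport the direct-sum decompositions. Since $Ad_{m}$ is an automorphism, $\mathfrak{n}_{h'}\cap\mathfrak{n}_{l'}=Ad_{m}(\mathfrak{n}_{h}\cap\mathfrak{n}_{l})=0$ and $\mathfrak{n}_{h'}+\mathfrak{n}_{l'}=Ad_{m}(\mathfrak{n}_{h}\oplus\mathfrak{n}_{l})=Ad_{m}(\mathfrak{n})=\mathfrak{n}$, so $\mathfrak{n}=\mathfrak{n}_{h'}\oplus\mathfrak{n}_{l'}$; applying $\theta$ (which commutes with $Ad_{m}$) to this identity gives $\mathfrak{n}^{-}=\mathfrak{n}_{h'}^{-}\oplus\mathfrak{n}_{l'}^{-}$. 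Together with $\mathfrak{a}=\mathfrak{a}_{h'}\oplus\mathfrak{a}_{l'}$ and $\theta(\mathfrak{h}')=\mathfrak{h}'$, $\theta(\mathfrak{l}')=\mathfrak{l}'$, all four conditions of Lemma \ref{dobro} hold for $Ad_{m}(\mathfrak{h}),Ad_{m}(\mathfrak{l})$, so these are good embeddings.

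The only step that is not purely formal is the claim that $Ad_{m}(\mathfrak{n}_{h})$ is literally the nilradical of the Iwasawa decomposition of $\mathfrak{h}'$, rather than merely some maximal nilpotent subalgebra; I expect this to be the point requiring care. It is, however, forced by the fact that $m$ centralizes $\mathfrak{a}\supset\mathfrak{a}_{h}$: this is exactly what guarantees $\mathfrak{a}_{h'}=\mathfrak{a}_{h}$ and leaves the restricted roots of $\mathfrak{h}$ unchanged under $Ad_{m}$, so that the positivity chamber used to single out $\mathfrak{n}_{h}$ is preserved. Everything else is a routine image-under-an-automorphism argument.
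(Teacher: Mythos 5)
Your proof is correct and follows essentially the same route as the paper's own (one-line) argument: the key fact in both is that $M_{0}$ normalizes $\mathfrak{n}$, $\mathfrak{n}^{-}$ and fixes $\mathfrak{a}$ pointwise (and commutes with $\theta$), so every condition of Lemma \ref{dobro} transports through $Ad_{m}$. Your write-up simply fleshes this out; the only divergence is that the paper additionally cites properness of the $L$-action, which, as your argument implicitly shows, is not needed for the four algebraic conditions when $\mathfrak{h}$ and $\mathfrak{l}$ are conjugated by the same element $m$.
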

\begin{proof}
It easily follows from the fact that $M_{0}$ normalizes $\mathfrak{n}, $ $\mathfrak{n}^{-}$ and that $L$ acts properly on $G/H$.
\end{proof}
\begin{lemma}\label{lemma:a+n} Let $(\mathfrak{g},\mathfrak{h},\mathfrak{l})$ be a semisimple triple such that $\mathfrak{h}$ and $\mathfrak{l}$ are well embedded subalgebras. If, moreover $d(\mathfrak{g})=d(\mathfrak{h})+d(\mathfrak{l})$,  then
 $$\mathfrak{a}+\sum_{\alpha\in\Sigma} \mathfrak{g}_{\alpha} \subset \mathfrak{h}+\mathfrak{l} .$$
\end{lemma}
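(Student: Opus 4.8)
The plan is to reduce the stated inclusion to the direct-sum normal form already recorded in Lemma \ref{lemma:nh+nl}, the only genuinely new ingredient being the $\theta$-invariance of $\mathfrak{h}$ and $\mathfrak{l}$, which is what handles the negative root spaces. First I would rewrite the left-hand side in Iwasawa terms. Since the real root system splits as $\Sigma = \Sigma^{+}\sqcup\Sigma^{-}$ with $\mathfrak{n}=\sum_{\alpha\in\Sigma^{+}}\mathfrak{g}_{\alpha}$ and $\mathfrak{n}^{-}=\sum_{\alpha\in\Sigma^{-}}\mathfrak{g}_{\alpha}$, we have $\sum_{\alpha\in\Sigma}\mathfrak{g}_{\alpha}=\mathfrak{n}\oplus\mathfrak{n}^{-}$, so the assertion becomes
$$\mathfrak{a}+\mathfrak{n}+\mathfrak{n}^{-}\subset\mathfrak{h}+\mathfrak{l}.$$

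Next I would distribute each of the three summands across $\mathfrak{h}$ and $\mathfrak{l}$ using well-embeddedness. Because $\mathfrak{h}$ and $\mathfrak{l}$ are well embedded, Lemma \ref{lemma:nh+nl} supplies the equalities $\mathfrak{a}=\mathfrak{a}_{h}\oplus\mathfrak{a}_{l}$, $\mathfrak{n}=\mathfrak{n}_{h}\oplus\mathfrak{n}_{l}$ and $\mathfrak{n}^{-}=\mathfrak{n}_{h}^{-}\oplus\mathfrak{n}_{l}^{-}$, while by construction $\mathfrak{a}_{h},\mathfrak{n}_{h}\subset\mathfrak{h}$ and $\mathfrak{a}_{l},\mathfrak{n}_{l}\subset\mathfrak{l}$. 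The hypothesis $d(\mathfrak{g})=d(\mathfrak{h})+d(\mathfrak{l})$ is precisely what is needed to promote the a priori inclusions $\mathfrak{a}_{h}\oplus\mathfrak{n}_{h}\oplus\mathfrak{a}_{l}\oplus\mathfrak{n}_{l}\subset\mathfrak{a}+\mathfrak{n}$ to an equality, via the dimension count $d(\mathfrak{g})=\dim(\mathfrak{a}+\mathfrak{n})$ used in the proof of that lemma; without it one could only assert containment of these pieces, which would not give the conclusion.

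Finally I would place the negative part by applying $\theta$. From $\mathfrak{n}_{h}\subset\mathfrak{h}$ and $\theta(\mathfrak{h})=\mathfrak{h}$ (both recorded in Lemma \ref{lemma:nh+nl}) we get $\mathfrak{n}_{h}^{-}=\theta(\mathfrak{n}_{h})\subset\mathfrak{h}$, and symmetrically $\mathfrak{n}_{l}^{-}\subset\mathfrak{l}$. Assembling the pieces,
$$\mathfrak{a}+\mathfrak{n}+\mathfrak{n}^{-}=(\mathfrak{a}_{h}+\mathfrak{n}_{h}+\mathfrak{n}_{h}^{-})+(\mathfrak{a}_{l}+\mathfrak{n}_{l}+\mathfrak{n}_{l}^{-})\subset\mathfrak{h}+\mathfrak{l},$$
which is the desired inclusion. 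I do not expect a real obstacle here: the statement is essentially bookkeeping built on the well-embedding normal form. The one point deserving care is the logical role of the dimension hypothesis, which is used not to move individual vectors into $\mathfrak{h}$ or $\mathfrak{l}$ but to guarantee that $\mathfrak{a}_{h}\oplus\mathfrak{n}_{h}$ and $\mathfrak{a}_{l}\oplus\mathfrak{n}_{l}$ jointly exhaust $\mathfrak{a}\oplus\mathfrak{n}$ so that the relevant inclusions are equalities; if any gap were to arise it would be there, and everything else follows at once from the definitions together with $\theta$-invariance.
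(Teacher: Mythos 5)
Your proof is correct and follows essentially the same route as the paper: the paper's own proof simply invokes Lemma \ref{lemma:nh+nl} to get $\sum_{\alpha\in\Sigma}\mathfrak{g}_{\alpha}=\mathfrak{n}+\mathfrak{n}^{-}\subset\mathfrak{h}+\mathfrak{l}$ together with $\mathfrak{a}=\mathfrak{a}_{h}\oplus\mathfrak{a}_{l}$, which is exactly your assembly of the pieces (your explicit derivation $\mathfrak{n}_{h}^{-}=\theta(\mathfrak{n}_{h})\subset\mathfrak{h}$ is already contained in that lemma's conclusion $\mathfrak{n}^{-}=\mathfrak{n}_{h}^{-}\oplus\mathfrak{n}_{l}^{-}$, since $\mathfrak{n}_{h}^{-},\mathfrak{n}_{l}^{-}$ are by construction subalgebras of $\mathfrak{h},\mathfrak{l}$). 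Your closing remark on the role of the dimension hypothesis matches how it is used inside the proof of Lemma \ref{lemma:nh+nl}.
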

\begin{proof}
By Lemma \ref{lemma:nh+nl},  $\sum_{\alpha\in\Sigma}\mathfrak{g}_{\alpha}=\mathfrak{n}+\mathfrak{n}^{-}\subset \mathfrak{h}+\mathfrak{l}.$ Since $\mathfrak{a}_{h}\oplus \mathfrak{a}_{l} = \mathfrak{a} ,$ the proof follows.
\end{proof}

\subsection{Chevalley bases compatible with non-compact real forms}
Given a root $\alpha_{c}$ and a root space $\mathfrak{g}_{\alpha_{c}}$ we denote by $t_{\alpha}\in \mathfrak{j}^{c}$ the  vector determined by the equality $B(t_{\alpha},h)=\alpha_{c}(h)$ for all $h\in\mathfrak{j}^c$. Set $h_{\alpha}=2t_{\alpha}/B(t_{\alpha},t_{\alpha}).$
\begin{definition}  {\rm A Chevalley basis of $\mathfrak{g}^c$ with respect to $\mathfrak{j}^c$ is a basis of $\mathfrak{g}^c$  consisting of $x_{\alpha}\in\mathfrak{g}_{\alpha_{c}}$ and $h_{\alpha}$ with the following properties:}
\begin{enumerate}
\item $[x_{\alpha}, x_{-\alpha}]=-h_{\alpha}, \,\forall \alpha_{c}\in\Delta$,
\item {\rm for each pair $\alpha_{c},\beta_{c}\in\Delta$ such that $\alpha_{c}+\beta_{c}\in\Delta$ the constants $c_{\alpha,\beta}\in\mathbb{C}$ determined by $[x_{\alpha},x_{\beta}]=c_{\alpha,\beta}x_{\alpha+\beta}$ satisfy $c_{\alpha,\beta}=c_{-\alpha,-\beta}$.}
\end{enumerate}
\end{definition}
\noindent Throughout, we work with a special choice of the Chevalley basis compatible with a non-compact real form $\mathfrak{g}$ of $\mathfrak{g}^c$.

\begin{lemma} For the chosen $\mathfrak{j}^c$ in $\mathfrak{g}^c$ there is a base of $\sum_{\alpha_c\in\Delta}\mathfrak{g}_{\alpha_c}$ of the form
 $$\{ \tilde{x}_{\alpha} \ | \ \tilde{x}_{\alpha}\in \mathfrak{g}_{\alpha_{c}}, \ \alpha_{c}\in \Delta  \}, $$ 
such that for every $\alpha_{c}\in\Delta$
	      $$\tau (\tilde{x}_{\alpha}) = \tilde{x}_{-\alpha}, \  \ B(\tilde{x}_{\alpha}, \tilde{x}_{-\alpha})=-1.$$
\label{lemtau}
\end{lemma}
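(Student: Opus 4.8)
The plan is to exhibit the basis by normalizing arbitrary root vectors against the positive definite Hermitian form $B_{\tau}$ and then propagating the choice from positive to negative roots by means of $\tau$. Two facts carry the whole argument: first, that $\tau$ stabilizes $\mathfrak{j}^c$ and carries $\mathfrak{g}_{\alpha_c}$ onto $\mathfrak{g}_{-\alpha_c}$ for every $\alpha_c\in\Delta$; second, that $B_{\tau}(X,Y)=-B(X,\tau(Y))$ is positive definite. Granting these, fix a set $\Delta^{+}$ of positive roots. For each $\alpha_c\in\Delta^{+}$ pick any nonzero $y_{\alpha}\in\mathfrak{g}_{\alpha_c}$; since $B_{\tau}(y_{\alpha},y_{\alpha})$ is a positive real, rescale $\tilde{x}_{\alpha}:=c_{\alpha}y_{\alpha}$ with $c_{\alpha}=B_{\tau}(y_{\alpha},y_{\alpha})^{-1/2}>0$ so that $B_{\tau}(\tilde{x}_{\alpha},\tilde{x}_{\alpha})=1$, and define $\tilde{x}_{-\alpha}:=\tau(\tilde{x}_{\alpha})\in\mathfrak{g}_{-\alpha_c}$. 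Because each complex root space is one-dimensional and distinct root spaces are $B$-orthogonal, the collection $\{\tilde{x}_{\alpha}\}_{\alpha_c\in\Delta}$ is a base of $\sum_{\alpha_c\in\Delta}\mathfrak{g}_{\alpha_c}$, and it satisfies the two required identities: $\tau(\tilde{x}_{\pm\alpha})=\tilde{x}_{\mp\alpha}$ (using $\tau^{2}=\mathrm{id}$), and $B(\tilde{x}_{\alpha},\tilde{x}_{-\alpha})=B(\tilde{x}_{\alpha},\tau(\tilde{x}_{\alpha}))=-B_{\tau}(\tilde{x}_{\alpha},\tilde{x}_{\alpha})=-1$.

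For the first fact I would pin down the action of $\tau$ on the real Cartan subalgebra $i\mathfrak{t}+\mathfrak{a}$, on which all roots take real values. Writing $\mathfrak{g}^c=\mathfrak{g}_u\oplus i\mathfrak{g}_u$ with $\mathfrak{g}_u=\mathfrak{k}+i\mathfrak{p}$, the conjugation $\tau$ equals $+\mathrm{id}$ on $\mathfrak{k}$ and $i\mathfrak{p}$ and $-\mathrm{id}$ on $i\mathfrak{k}$ and $\mathfrak{p}$; since $i\mathfrak{t}\subset i\mathfrak{k}$ and $\mathfrak{a}\subset\mathfrak{p}$, this yields $\tau(H)=-H$ for every $H\in i\mathfrak{t}+\mathfrak{a}$. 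Now for $H\in\mathfrak{j}^c$ and $X\in\mathfrak{g}_{\alpha_c}$, the conjugate-linearity of $\tau$ together with $\tau[\,\cdot\,,\,\cdot\,]=[\tau(\cdot),\tau(\cdot)]$ gives $[H,\tau(X)]=\tau[\tau(H),X]=\overline{\alpha_c(\tau(H))}\,\tau(X)$, so $\tau(X)$ is a root vector for the root $H\mapsto\overline{\alpha_c(\tau(H))}$. Evaluating on $i\mathfrak{t}+\mathfrak{a}$, where $\tau(H)=-H$ and $\alpha_c(H)\in\mathbb{R}$, this root coincides with $-\alpha_c$; as both are $\mathbb{C}$-linear and agree on a real form spanning $\mathfrak{j}^c$ over $\mathbb{C}$, we conclude $\tau(\mathfrak{g}_{\alpha_c})=\mathfrak{g}_{-\alpha_c}$.

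For the second fact, recall that $B$ is negative definite on the compact real form $\mathfrak{g}_u$; since $\tau$ is the identity on $\mathfrak{g}_u$, we have $B_{\tau}(X,X)=-B(X,X)>0$ for nonzero $X\in\mathfrak{g}_u$, and $B_{\tau}$ is Hermitian because $\overline{B(X,\tau(Y))}=B(\tau(X),Y)$ follows from $B$ being real-valued on $\mathfrak{g}_u$. A Hermitian form that is positive on a real form is positive definite on the whole complexification, which gives $B_{\tau}>0$. The only genuinely delicate point, and the one I would verify most carefully, is the bookkeeping of conjugate-linearity in the identity $[H,\tau(X)]=\overline{\alpha_c(\tau(H))}\,\tau(X)$ and in the normalization step, since a misplaced sign or a dropped complex conjugate there would spoil both the matching $\tau(\tilde{x}_{\alpha})=\tilde{x}_{-\alpha}$ and the value $B(\tilde{x}_{\alpha},\tilde{x}_{-\alpha})=-1$; everything else is routine once $\tau(\mathfrak{g}_{\alpha_c})=\mathfrak{g}_{-\alpha_c}$ and the positivity of $B_{\tau}$ are established. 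If one additionally wishes the $\tilde{x}_{\alpha}$ to constitute a Chevalley basis in the sense of the preceding definition, the same normalization may be carried out starting from an already fixed Chevalley basis, but the two displayed identities require only the argument above.
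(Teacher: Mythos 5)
Your proof is correct, and it reaches the lemma by a somewhat different route than the paper, so a comparison is worthwhile. Both arguments rest on the same two pillars --- that $\tau$ interchanges $\mathfrak{g}_{\alpha_{c}}$ and $\mathfrak{g}_{-\alpha_{c}}$, and that the pairing built from the compact real form is definite --- but they implement them differently. For the first pillar you compute directly that $\tau=-\mathrm{id}$ on $i\mathfrak{t}+\mathfrak{a}$ and track conjugate-linearity, whereas the paper applies the negative definiteness of $B$ on $\mathfrak{g}_{u}$ to the $\tau$-fixed vector $x_{\alpha}+\tau(x_{\alpha})\in\mathfrak{g}_{u}$ together with the orthogonality $B(\mathfrak{g}_{\alpha_{c}},\mathfrak{g}_{\beta_{c}})=0$ unless $\alpha_{c}+\beta_{c}=0$, concluding $B(x_{\alpha},\tau(x_{\alpha}))<0$ and hence $\tau(x_{\alpha})\in\mathfrak{g}_{-\alpha_{c}}$. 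For the construction itself, the paper keeps the previously fixed Chevalley basis, writes $\tau(x_{\alpha})=a\,x_{-\alpha}$, argues $a>0$ (a step that tacitly uses the Chevalley normalization $[x_{\alpha},x_{-\alpha}]=-h_{\alpha}$, which forces $B(x_{\alpha},x_{-\alpha})<0$), and rescales both $x_{\alpha}$ and $x_{-\alpha}$ by explicit positive constants; you instead choose vectors only for the positive roots, normalize them to $B_{\tau}(\tilde{x}_{\alpha},\tilde{x}_{\alpha})=1$, and \emph{define} $\tilde{x}_{-\alpha}:=\tau(\tilde{x}_{\alpha})$, after which both required identities are immediate from $\tau^{2}=\mathrm{id}$ and $B(\tilde{x}_{\alpha},\tau(\tilde{x}_{\alpha}))=-B_{\tau}(\tilde{x}_{\alpha},\tilde{x}_{\alpha})$. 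What each buys: your version is self-contained (it needs no prior Chevalley basis and no unexplained constants --- the paper's scalar $b$ is never introduced and has to be read as $-B(x_{\alpha},x_{-\alpha})$), while the paper's version produces $\tilde{x}_{\alpha}$ proportional to the original Chevalley vectors, in keeping with the subsection's theme of Chevalley bases compatible with the real form. One point you should tighten: the slogan ``a Hermitian form positive on a real form is positive definite on the complexification'' is false in general; it requires the form to be real-valued on that real form (otherwise the cross terms $2\,\mathrm{Im}\,B_{\tau}(X,Y)$ in $B_{\tau}(X+iY,X+iY)$ can destroy positivity). Here this hypothesis does hold, since $B_{\tau}=-B$ on $\mathfrak{g}_{u}\times\mathfrak{g}_{u}$ is real-valued --- a fact you already invoke in your Hermitian-symmetry check --- so the fix is to cite that explicitly rather than the general principle.
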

\begin{proof}
Note that $\tau (\mathfrak{j}^{c})=\mathfrak{j}^{c}$ and therefore $\tau$ permutes the root spaces. Since $x_{\alpha}+\tau(x_{\alpha})\in\mathfrak{g}_u$  we get $B(x_{\alpha}+\tau(x_{\alpha}),x_{\alpha}+\tau(x_{\alpha}))<0$. Note that $B(x_{\alpha},x_{\beta})\not=0$ if and only if $\alpha_c=-\beta_c$. Hence $B(x_{\alpha},\tau(x_{\alpha}))<0$ and therefore $\tau(x_{\alpha})=ax_{-\alpha}$ for some $a>0$. Since $\tau$ is an involution, $\tau(x_{-\alpha})=\frac{1}{a}x_{\alpha}$. Therefore, one can take 
$\tilde x_{\alpha}=\frac{1}{\sqrt{ab}}x_{\alpha}$ and $\tilde x_{-\alpha}=\frac{\sqrt{a}}{\sqrt{b}}x_{-\alpha}$. 
\end{proof}
\begin{remark}{\rm  Note that in this subsection we consider only root systems of  semisimple complex Lie algebras, and, therefore, follow the standard notation. However, in the subsequent sections  we  distinguish between real and complex roots.}
\end{remark}

\subsection{Some properties of  root systems}
In this subsection we establish some properties of  abstract root systems. Our aim is Lemma \ref{lemma:root-sum} which describes subsets of the root system consisting of roots simultaneously vanishing on a pair of vectors. This may be known, but we did not find any appropriate reference. Let $\Delta\subset \mathbb{R}^{n}$ be an indecomposable root system in the Euclidean space $(\mathbb{R}^{n},(,))$  (\cite{ov}, Chapter 3). The system $\Delta$ is not assumed to be reduced. Let $\Delta^{+}\subset \Delta$ be a subset of positive roots and  $\Pi\subset \Delta^{+}$  the subset of simple roots. There is a unique maximal root $\beta \in \Delta^{+}$ such that for every $\alpha\in \Delta^{+}$  vector $\beta - \alpha$ is a combination of simple roots with non-negative coefficients.
\begin{lemma}\label{lemma:max-root}
Let $A$ be a vector in the interior of the positive Weyl chamber. Then $\beta (A) > \alpha (A)$ for any $\alpha\in \Delta^{+}- \{ \beta \} .$
\end{lemma}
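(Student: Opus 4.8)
The plan is to prove Lemma~\ref{lemma:max-root} by a direct computation exploiting the defining property of the maximal root $\beta$ together with the positivity of the inner product of simple roots against any vector in the open positive Weyl chamber. First I would recall that $A$ lies in the interior of the positive Weyl chamber precisely when $(A,\alpha_i)>0$ for every simple root $\alpha_i\in\Pi$; equivalently, writing $A$ in terms of the basis dual to the simple roots, every fundamental-coweight coordinate of $A$ is strictly positive. Since we are working in the Euclidean space $(\mathbb{R}^n,(\cdot,\cdot))$ and the functionals are given by $\alpha(A)=(A,\alpha)$ via the identification of $\mathfrak{a}^*$ with $\mathfrak{a}$, I can treat $\alpha(A)$ and $(A,\alpha)$ interchangeably.

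The key step is to use the characterization of $\beta$ stated just before the lemma: for every $\alpha\in\Delta^+$ the difference $\beta-\alpha$ is a non-negative integer combination of simple roots, say
\begin{equation}
\beta-\alpha=\sum_{i}c_i\,\alpha_i,\qquad c_i\in\mathbb{Z}_{\ge 0}.
\label{eq:diffsimple}
\end{equation}
Applying the functional $A$ to both sides gives
\begin{equation}
\beta(A)-\alpha(A)=\sum_i c_i\,\alpha_i(A).
\label{eq:pairing}
\end{equation}
Because $A$ is interior to the positive chamber, each $\alpha_i(A)>0$, and the coefficients $c_i$ are non-negative integers, so the right-hand side of~\eqref{eq:pairing} is non-negative and is strictly positive as soon as at least one $c_i$ is positive. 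The remaining point is to argue that for $\alpha\neq\beta$ not all the $c_i$ vanish: if every $c_i=0$ then $\beta-\alpha=0$, i.e. $\alpha=\beta$, contradicting $\alpha\in\Delta^+\setminus\{\beta\}$. Hence the sum is strictly positive and $\beta(A)>\alpha(A)$, as claimed.

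I expect the main (and only real) obstacle to be justifying the foundational fact that $\beta-\alpha$ expands with \emph{non-negative} coefficients in the simple roots for \emph{every} positive root $\alpha$, rather than only for those $\alpha$ obtained from $\beta$ by subtracting simple roots one at a time. In the excerpt this is asserted as the defining property of the maximal root, so I would simply invoke it; if a self-contained argument were wanted, one establishes it by the standard induction showing that the height-graded poset of positive roots has $\beta$ as its unique maximal element and that any two positive roots comparable in this partial order differ by a non-negative combination of simple roots. A minor subtlety worth a remark is that $\Delta$ need not be reduced (restricted root systems of type $BC$ can occur), but the existence and the stated characterization of a unique highest root are unaffected: the chain from $\alpha$ up to $\beta$ still proceeds by adding simple roots, so~\eqref{eq:diffsimple} holds verbatim and the argument goes through unchanged.
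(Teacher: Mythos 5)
Your proof is correct and is essentially the paper's own argument: both expand $\beta-\alpha$ as a non-negative combination of simple roots, pair against $A$, and use $\alpha_i(A)>0$ together with $\alpha\neq\beta$ to get strict positivity. The paper's proof is just a terser version of yours, so no further comment is needed.
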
 
\begin{proof}
Since $A$ is in the interior of the positive Weyl chamber, $\alpha_{i} (A) > 0$ for any simple root $\alpha_{i}\in \Pi .$ Thus for $\alpha\in\Delta^{+},$ $\alpha \neq \beta$ we obtain
$(\beta - \alpha )(A) > 0.$
\end{proof}

\begin{proposition}[\cite{ov}, Chapter 3, Proposition 1.2]
Let $\gamma_{1}, \gamma_{2}\in \Delta .$ If $(\gamma_{1},\gamma_{2})<0$ then $\gamma_{1}+\gamma_{2}\in \Delta .$
\label{roots}
\end{proposition}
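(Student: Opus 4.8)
The plan is to prove this purely combinatorially, since the statement only concerns the root system $\Delta$ in the Euclidean space $(\mathbb{R}^n,(,))$. First I would dispose of the case where $\gamma_1$ and $\gamma_2$ are linearly dependent: a root proportional to $\gamma_1$ can only be $\pm\gamma_1$, $\pm\tfrac12\gamma_1$ or $\pm2\gamma_1$, and using $\Delta=-\Delta$ one checks case by case that $\gamma_1+\gamma_2\in\Delta$, the single genuinely degenerate possibility being $\gamma_2=-\gamma_1$, for which $\gamma_1+\gamma_2=0$. This case must be excluded, so the hypothesis is implicitly read as $\gamma_1+\gamma_2\neq0$, and from now on I assume $\gamma_1$ and $\gamma_2$ to be linearly independent.

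For the main case I would work with the Cartan integers $n_{12}=2(\gamma_1,\gamma_2)/(\gamma_2,\gamma_2)$ and $n_{21}=2(\gamma_1,\gamma_2)/(\gamma_1,\gamma_1)$, both of which are integers by the axioms of a root system. Applying the Cauchy--Schwarz inequality to $\gamma_1,\gamma_2$ gives $n_{12}\,n_{21}=4(\gamma_1,\gamma_2)^2/\bigl((\gamma_1,\gamma_1)(\gamma_2,\gamma_2)\bigr)<4$, the inequality being strict precisely because $\gamma_1$ and $\gamma_2$ are linearly independent. Since $(\gamma_1,\gamma_2)<0$ and the norms are positive, both $n_{12}$ and $n_{21}$ are negative integers, so their product is a positive integer in $\{1,2,3\}$. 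Were both $n_{12}$ and $n_{21}$ at most $-2$, the product would be at least $4$; hence at least one of them equals $-1$.

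Say $n_{21}=2(\gamma_1,\gamma_2)/(\gamma_1,\gamma_1)=-1$ (the other possibility is symmetric, obtained by reflecting in $\gamma_2$ instead). Then the reflection $s_{\gamma_1}$ in the hyperplane orthogonal to $\gamma_1$ acts by $s_{\gamma_1}(\gamma_2)=\gamma_2-n_{21}\gamma_1=\gamma_2+\gamma_1$. Since $\Delta$ is invariant under the reflections $s_{\gamma_1}$, which is one of the defining axioms of a root system (\cite{ov}, Chapter 3), the image $\gamma_1+\gamma_2$ lies again in $\Delta$, which is exactly the claim. I do not expect a serious obstacle here; the only point demanding care is that $\Delta$ is not assumed reduced, so proportional pairs such as $\gamma$ and $2\gamma$ may coexist, but this affects only the preliminary dependent case, settled at once by the symmetry $\Delta=-\Delta$, and does not disturb the Cauchy--Schwarz step, which is invoked solely for linearly independent roots. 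An alternative would replace the reflection step by an analysis of the $\gamma_1$-string through $\gamma_2$, deriving the relation between its two lengths and $n_{21}$ from unbrokenness of the string; I prefer the reflection argument since it avoids any discussion of string lengths.
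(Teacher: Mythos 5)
Your proof is correct, but there is nothing in the paper to compare it against: Proposition \ref{roots} is imported verbatim from \cite{ov} (Chapter 3, Proposition 1.2), and the paper gives no proof of its own, using the statement only as a black box (in the lemma on highest roots and in Lemma \ref{lemma:root-sum}). What you wrote is the standard textbook argument: settle proportional pairs by enumerating the possible ratios $\pm 1,\pm 2,\pm\tfrac12$ (needed since $\Delta$ is not assumed reduced), then for linearly independent roots observe that the Cartan integers $n_{12}$ and $n_{21}$ are negative integers whose product is $<4$ by strict Cauchy--Schwarz, so one of them equals $-1$, and the corresponding reflection $s_{\gamma_1}$ or $s_{\gamma_2}$ carries one root onto $\gamma_1+\gamma_2$, which lies in $\Delta$ by reflection-invariance. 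All steps check out, including the reflection formula $s_{\gamma_1}(\gamma_2)=\gamma_2-n_{21}\gamma_1$ and the fact that integrality and reflection-invariance hold for non-reduced systems. A useful by-product of doing the proof rather than citing it: you correctly flag that the statement as quoted is literally false when $\gamma_2=-\gamma_1$, since then $(\gamma_1,\gamma_2)<0$ but $\gamma_1+\gamma_2=0\notin\Delta$, so the hypothesis must implicitly include $\gamma_1+\gamma_2\neq 0$. This caveat is harmless for the paper's applications (there the two roots combined are never negatives of one another: e.g.\ a sum of two positive roots, or a positive root minus a simple root that cannot equal it), but it is worth recording. In short: the paper buys brevity by citation; your argument buys self-containedness and surfaces the implicit nondegeneracy hypothesis.
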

\begin{lemma}
If $\beta \in \Delta^{+}$ is such that for any $\alpha\in\Delta^{+}$ the vector $\beta + \alpha$ is not a root then $\beta$ is the highest root.
\end{lemma}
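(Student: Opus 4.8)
The plan is to show that the stated condition forces $\beta$ to coincide with the unique maximal root, which I will denote $\delta$ to avoid a clash with the notation of the paragraph preceding Lemma \ref{lemma:max-root}. The whole argument rests on two applications of Proposition \ref{roots}, read in both directions, together with the defining property of the maximal root.

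First I would translate the hypothesis into an inequality on inner products. By Proposition \ref{roots}, if $(\beta,\alpha)<0$ for some $\alpha\in\Delta^{+}$, then $\beta+\alpha\in\Delta$ (and $\beta+\alpha\neq 0$, being a sum of two positive roots). Since the hypothesis asserts $\beta+\alpha\notin\Delta$ for every $\alpha\in\Delta^{+}$, this forces $(\beta,\alpha)\geq 0$ for all $\alpha\in\Delta^{+}$; in particular $(\beta,\alpha_{i})\geq 0$ for every simple root $\alpha_{i}\in\Pi$, so $\beta$ is dominant.

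Next I would bring in the maximal root $\delta$. By its defining property, $\delta-\beta=\sum_{i}c_{i}\alpha_{i}$ with all $c_{i}\geq 0$. Suppose, for contradiction, that $\beta\neq\delta$; then $\delta-\beta\neq 0$ and at least one $c_{i}$ is positive. Expanding, $(\beta,\delta)=(\beta,\beta)+\sum_{i}c_{i}(\beta,\alpha_{i})\geq(\beta,\beta)>0$, using the dominance just established. Applying Proposition \ref{roots} to the pair $\delta$ and $-\beta$ (so that $(\delta,-\beta)=-(\beta,\delta)<0$ and $\delta+(-\beta)\neq 0$) yields $\delta-\beta\in\Delta$. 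Because $\delta-\beta$ is a nonzero non-negative combination of simple roots, it is a positive root $\alpha:=\delta-\beta\in\Delta^{+}$, and then $\beta+\alpha=\delta\in\Delta$ contradicts the hypothesis. Hence $\beta=\delta$, i.e. $\beta$ is the highest root.

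The argument is short, and I expect no serious obstacle; the only points requiring care are (i) checking that Proposition \ref{roots} may legitimately be applied with $-\beta$ in place of the second root, which provides the ``difference is a root'' direction, and (ii) confirming that the non-reduced case causes no trouble: if $\delta=2\beta$, then taking $\alpha=\beta$ already gives $\beta+\alpha=2\beta=\delta\in\Delta$, so the same contradiction applies. Both are subsumed by the single computation above.
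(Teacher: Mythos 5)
Your proof is correct, but it follows a genuinely different route from the paper's. The paper argues by uniqueness of the root possessing the stated property: it takes $\beta$ to be the highest root (which satisfies the hypothesis vacuously) and supposes a second, distinct root $\gamma$ also satisfies it; after using Proposition \ref{roots} to get dominance of both, it manufactures the root $\theta=\beta+\gamma-\alpha_i$ and argues that $\gamma$ would have to equal a simple root $\alpha_i$, which cannot have the property unless the system has rank one. You instead compare $\beta$ directly with the maximal root $\delta$: dominance of $\beta$ (same first step, via Proposition \ref{roots}) gives $(\beta,\delta)\geq(\beta,\beta)>0$, whence $\delta-\beta$ is a root, necessarily positive by the defining property of $\delta$, and then $\beta+(\delta-\beta)=\delta$ contradicts the hypothesis unless $\beta=\delta$. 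Your version has two concrete advantages: every application of Proposition \ref{roots} is to an honest pair of roots with a nonzero sum (the paper's step setting $\gamma_1=\beta+\gamma$ applies the proposition to a vector not known to be a root, which is at best a gap to be repaired), and you check explicitly that the non-reduced case $\delta=2\beta$ causes no trouble, which matters since the paper stipulates that $\Delta$ need not be reduced. What the paper's formulation buys in exchange is the slightly stronger-sounding phrasing that the highest root is the \emph{unique} root with this property, but that is an immediate corollary of your statement as well, so nothing is lost.
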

\begin{proof}
Assume that $\beta, \gamma\in\Delta^{+}$ are distinct and have the  property that $\beta+\alpha$ and $\gamma+\alpha$ are not roots for any $\alpha\in\Delta^{+}.$ By Proposition \ref{roots} for any simple root $\alpha_{i}$ we get $(\beta , \alpha_{i}), (\gamma , \alpha_{i}) \geq 0.$ As $\beta \neq \gamma ,$ for some simple root $\alpha_{i}$ we get $(\beta , \alpha_{i}) \neq (\gamma , \alpha_{i})$.
So $(\beta + \gamma , \alpha_{i})>0.$ By Proposition \ref{roots} (setting $\gamma_{1}=\alpha + \beta ,$ $\gamma_{2}=-\alpha_{i}$) we see that 
$\theta := \beta + \gamma -\alpha_{i}$
is a root. Since $\beta$ is the highest root, $\gamma$ is a positive root and $\alpha_{i}$ is simple (a lowest root) therefore $\gamma - \alpha_{i} = 0$ (otherwise $\beta - \theta$ written as a combination of positive roots would have some negative coefficients). But  $\gamma$ is a simple root and so does not have the assumed  property that $\gamma+\alpha$ is not a root for every positive $\alpha$ (unless $\Delta = \{ \alpha , -\alpha  \}  $ in which case $\gamma = \beta$).  
\end{proof}
\begin{lemma}\label{lemma:root-sum}
Let $X,H\in\mathbb{R}^{n}$ be  non-zero. Define 
$$C_{X}:= \{ \alpha\in\Delta \ | \ \alpha (X) = 0   \} , \ \ C_{H}:= \{ \alpha\in\Delta \ | \ \alpha (H) = 0   \} .$$
 Then $\Delta \neq C_{X} \cup C_{H}.$ 
\end{lemma}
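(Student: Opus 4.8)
The plan is to argue by contradiction: assume $\Delta = C_{X}\cup C_{H}$, so that every root vanishes on $X$ or on $H$, and manufacture a root $\gamma$ with $\gamma(X)\neq 0$ and $\gamma(H)\neq 0$. First I dispose of the degenerate case $X\parallel H$: then $C_{X}=C_{H}$, and since an indecomposable $\Delta$ spans $\mathbb{R}^{n}$, not every root can lie in the hyperplane $X^{\perp}$, whence $C_{X}\subsetneq\Delta$ and there is nothing to prove. So I may assume $X^{\perp}\neq H^{\perp}$. Writing $\Delta_{1}=C_{X}\setminus C_{H}$, $\Delta_{2}=C_{H}\setminus C_{X}$, $\Delta_{3}=C_{X}\cap C_{H}$, I observe that $\Delta_{1}$ and $\Delta_{2}$ are both nonempty: if, say, $\Delta_{1}=\emptyset$ then $\Delta=C_{H}\subset H^{\perp}$, again contradicting that $\Delta$ spans $\mathbb{R}^{n}$.

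The second step uses Proposition \ref{roots}, exactly in the spirit of the preceding lemmas. Here $\alpha\in\Delta_{1}$ means $\alpha(X)=0,\ \alpha(H)\neq 0$, while $\beta\in\Delta_{2}$ means $\beta(X)\neq 0,\ \beta(H)=0$. If $(\alpha,\beta)\neq 0$, then according to the sign of $(\alpha,\beta)$ Proposition \ref{roots} produces a root $\gamma=\alpha+\beta$ (when $(\alpha,\beta)<0$) or $\gamma=\alpha-\beta$ (when $(\alpha,\beta)>0$); in either case $\gamma(X)=\pm\beta(X)\neq 0$ and $\gamma(H)=\alpha(H)\neq 0$, i.e. $\gamma$ is the forbidden root. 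Hence, under our assumption, $\Delta_{1}\perp\Delta_{2}$.

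The main obstacle is the set $\Delta_{3}$ of roots vanishing simultaneously on $X$ and $H$, which can link $\Delta_{1}$ to $\Delta_{2}$ and so blocks an immediate appeal to indecomposability. To handle it I pass to the non-orthogonality graph on $\Delta$ (vertices the roots, an edge for each non-orthogonal pair), which is connected precisely because $\Delta$ is indecomposable (a disconnection would split $\Delta$ into two orthogonal nonempty pieces). Since $\Delta_{1}\perp\Delta_{2}$ there is no edge between them, yet connectedness forces a path joining them; I take one of minimal length $\ell\geq 2$, say $\rho_{0}\in\Delta_{1},\ \rho_{1},\dots,\rho_{\ell-1}\in\Delta_{3},\ \rho_{\ell}\in\Delta_{2}$. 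Minimality pushes all interior vertices into $\Delta_{3}$ and forbids chords, so $(\rho_{0},\rho_{2})=0$ while $(\rho_{0},\rho_{1})\neq 0$ and $(\rho_{1},\rho_{2})\neq 0$. The decisive point is that $\rho_{1}\in\Delta_{3}$ vanishes on both $X$ and $H$: applying Proposition \ref{roots} to the non-orthogonal pair $\rho_{0},\rho_{1}$ yields a root $\sigma=\rho_{0}+\rho_{1}$ or $\sigma=\rho_{1}-\rho_{0}$, and in either case $\sigma(X)=0$ and $\sigma(H)=\pm\rho_{0}(H)\neq 0$, so $\sigma\in\Delta_{1}$, while $(\sigma,\rho_{2})=(\rho_{1},\rho_{2})\pm(\rho_{0},\rho_{2})=(\rho_{1},\rho_{2})\neq 0$. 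Thus $\sigma,\rho_{2},\dots,\rho_{\ell}$ is a strictly shorter walk from $\Delta_{1}$ to $\Delta_{2}$, contradicting the minimality of $\ell$ (and when $\ell=2$ it is a direct $\Delta_{1}$--$\Delta_{2}$ edge, contradicting $\Delta_{1}\perp\Delta_{2}$). This contradiction shows $\Delta\neq C_{X}\cup C_{H}$.

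I expect the reflection/shortcut in the third step to be the delicate part: it rests on the observation that a root in $\Delta_{3}\subset X^{\perp}\cap H^{\perp}$ leaves the two functionals $(\cdot)(X)$ and $(\cdot)(H)$ undisturbed, which is exactly what keeps $\sigma$ inside $\Delta_{1}$, while the no-chord property of a geodesic keeps $\sigma$ non-orthogonal to the next vertex. Everything else reduces to the spanning property of indecomposable root systems and to Proposition \ref{roots}.
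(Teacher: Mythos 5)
Your proof is correct, but it takes a genuinely different route from the paper's. Both arguments start the same way: assume $\Delta=C_{X}\cup C_{H}$, note that $C_{X}\setminus C_{H}$ and $C_{H}\setminus C_{X}$ must both be nonempty (this is where the spanning property of the indecomposable system $\Delta$ enters, as in your degenerate cases), and use Proposition \ref{roots} to see that adding a root nonzero on $H$ to a root nonzero on $X$ would create a forbidden root --- that is exactly the paper's justification that $\alpha+\beta$ is never a root for $\alpha\in C_{X}^{+},\beta\in C_{H}^{+}$, and your justification that $\Delta_{1}\perp\Delta_{2}$. From there the paths diverge. The paper assembles the set $\tilde{\Delta}^{+}=C_{HX}^{+}\cup C_{X}^{+}\cup C_{H}^{+}$ of roots non-negative on both $X$ and $H$, asserts that it is a genuine system of positive roots, and obtains the contradiction from the highest root of $\tilde{\Delta}^{+}$: since $\Delta$ is indecomposable, the highest root is a positive combination of \emph{all} simple roots, and since some simple root is positive on $X$ and another is positive on $H$, the highest root is positive on both --- a forbidden root. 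You instead run a combinatorial argument on the non-orthogonality graph: its connectedness is equivalent to indecomposability, a minimal path from $\Delta_{1}$ to $\Delta_{2}$ must pass through $C_{X}\cap C_{H}$ with no chords, and Proposition \ref{roots} applied to $\rho_{0}$ and $\rho_{1}$ produces a root that stays in $\Delta_{1}$ (because $\rho_{1}$ vanishes on both $X$ and $H$) yet remains non-orthogonal to $\rho_{2}$ (because $(\rho_{0},\rho_{2})=0$), shortcutting the path. What your approach buys is self-containedness: it needs only Proposition \ref{roots}, the spanning property, and the definition of indecomposability, and in particular it avoids the paper's unproved (though standard) claim that a closed subset of $\Delta$ containing exactly one of each pair $\pm\alpha$ is a positive system, as well as the highest-root facts. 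What the paper's approach buys is brevity given standard structure theory, and coherence with the highest-root lemmas it establishes in the same subsection. One small point, common to both proofs and harmless in both: each application of Proposition \ref{roots} tacitly requires $\gamma_{1}+\gamma_{2}\neq 0$, which in your argument is automatic because the two roots being combined always differ in their values on $X$ or on $H$.
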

\begin{proof}
Assume that $\Delta = C_{X} \cup C_{H} . $ Consider sets $C_{H}\setminus C_{X}$ and $C_{X}\setminus C_{H}.$ If one of them is empty then all the roots in $\Delta$ are orthogonal to $X$ or to $H$, but this is impossible. Therefore there exist simple roots $\gamma_{1}, \gamma_{2}$ (for any choice of a subset of simple roots) such that
$$\gamma_{1}\in C_{H}\setminus C_{X}, \ \ \gamma_{2}\in C_{X}\setminus C_{H},$$
because any root can be represented as a linear combination of simple roots. Since $C_{H}\cap C_{X}$ is a root system, we can take the set of positive roots $C_{HX}^{+} \subset C_{H}\cap C_{X}.$ Let 
$$C_{X}^{+} := \{ \alpha\in C_{H}\setminus C_{X} \ | \ \alpha (X) >0  \} ,$$
$$C_{H}^{+} := \{ \alpha\in C_{X}\setminus C_{H} \ | \ \alpha (H) >0  \} ,$$
One can see that for $\alpha\in C_{X}^{+}$ and $\beta\in C_{H}^{+}$, vector $\alpha + \beta$ is not a root, because otherwise one would obtain a root which is non-zero on $H$ and on $X.$ Consider the set
$$\tilde{\Delta}^{+} := C_{HX}^{+}\cup C_{X}^{+}\cup C_{H}^{+}. $$
Note that for any $\alpha\in\Delta$ exactly one of the roots $\alpha ,$ $-\alpha$ is contained in $\tilde{\Delta}^{+} .$ Also if $\alpha , \gamma \in \tilde{\Delta}^{+}$ and $\alpha + \gamma $ is a root, then $\alpha + \gamma \in \tilde{\Delta}^{+}.$ Therefore $\tilde{\Delta}^{+}$ is the set of positive roots from $\Delta $ with the  property
$$\forall_{\alpha\in\tilde{\Delta}^{+}}\alpha (X)\geq 0, \ \alpha (H)\geq 0.$$
Consider the highest root $\tilde{\beta}\in \tilde{\Delta}^{+}.$ Since $\Delta$ is indecomposable  $\tilde{\beta}$ is a linear combination of all simple roots of $\tilde{\Delta}^{+}$ with  positive coefficients. But  we have seen before that there are simple roots $\tilde{\gamma}_{1} ,$ $\tilde{\gamma}_{2}\in \tilde{\Delta}^{+}$ such that
$$\tilde{\gamma}_{1} (X)>0, \ \ \tilde{\gamma}_{2}(H)>0.$$
Thus $\tilde{\beta} (X)>0$ and $\tilde{\beta} (H)>0.$ A contradiction.
\end{proof}

\section{Proof of Theorem \ref{thm:root-decomp}}
By Propositions \ref{prop:iwasawa} and  \ref{prop:good} we can restrict ourselves to good embeddings $\mathfrak{h}\hookrightarrow\mathfrak{g}$ and $\mathfrak{l}\hookrightarrow\mathfrak{g}$.
\begin{proposition} Under the assumptions of Theorem \ref{thm:root-decomp},
if there does not exist $0\neq X\in \mathfrak{t}$ such that $B(X,U)=0$ for any $U\in \mathfrak{h}+\mathfrak{l} ,$ then  $\mathfrak{g}=\mathfrak{h}+\mathfrak{l}$. 
\label{propx}
\end{proposition}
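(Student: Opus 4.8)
The plan is to prove the contrapositive: assuming $\mathfrak{g}\neq\mathfrak{h}+\mathfrak{l}$, I will exhibit a nonzero $X\in\mathfrak{t}$ with $B(X,U)=0$ for all $U\in\mathfrak{h}+\mathfrak{l}$, after replacing the triple by an isomorphic good one as permitted by Lemma \ref{ge}. Write $W:=\mathfrak{h}+\mathfrak{l}$. The first step is to locate the $B$-orthogonal complement $W^{\perp}$ inside $\mathfrak{m}_{0}$. By Lemma \ref{lemma:a+n} we already have $V:=\mathfrak{a}+\sum_{\alpha\in\Sigma}\mathfrak{g}_{\alpha}\subseteq W$, and $\mathfrak{g}=\mathfrak{m}_{0}\oplus V$ is a $B$-orthogonal splitting with $B$ nondegenerate on each summand: indeed $B(\mathfrak{k},\mathfrak{p})=0$ gives $\mathfrak{m}_{0}\perp_{B}\mathfrak{a}$, the centralizer $\mathfrak{g}_{0}=\mathfrak{m}_{0}+\mathfrak{a}$ of $\mathfrak{a}$ is $B$-orthogonal to every $\mathfrak{g}_{\alpha}$ with $\alpha\neq 0$, and $B$ pairs $\mathfrak{g}_{\alpha}$ nondegenerately with $\mathfrak{g}_{-\alpha}$; moreover $B|_{\mathfrak{m}_{0}}$ is (negative) definite since $\mathfrak{m}_{0}\subset\mathfrak{k}$. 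Because $V\subseteq W$, we get $W=V\oplus(W\cap\mathfrak{m}_{0})$, and a short computation with this orthogonal decomposition shows that $W^{\perp}$ equals the orthogonal complement of $W\cap\mathfrak{m}_{0}$ taken inside $\mathfrak{m}_{0}$; in particular $W^{\perp}\subseteq\mathfrak{m}_{0}$.

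Since $\mathfrak{g}$ is semisimple, $B$ is nondegenerate, so $\dim W^{\perp}=\dim\mathfrak{g}-\dim W$; hence $\mathfrak{g}\neq W$ forces $W^{\perp}\neq 0$. I then choose any nonzero $Y\in W^{\perp}\subseteq\mathfrak{m}_{0}$. The key step is to rotate $Y$ into the Cartan subalgebra $\mathfrak{t}$. Since $\mathfrak{m}_{0}=\operatorname{Lie}(M_{0})$ with $M_{0}$ compact and connected and $\mathfrak{t}$ a maximal abelian (Cartan) subalgebra, the maximal torus theorem provides $m\in M_{0}$ with $\operatorname{Ad}_{m}(Y)\in\mathfrak{t}$, and $\operatorname{Ad}_{m}(Y)\neq 0$ because $\operatorname{Ad}_{m}$ is invertible. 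By Lemma \ref{ge} the embeddings $\operatorname{Ad}_{m}(\mathfrak{h}),\operatorname{Ad}_{m}(\mathfrak{l})$ are again good, so $(\mathfrak{g},\operatorname{Ad}_{m}\mathfrak{h},\operatorname{Ad}_{m}\mathfrak{l})$ is an admissible triple, isomorphic to the original, to which the assumptions of Theorem \ref{thm:root-decomp} still apply.

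Finally I set $X:=\operatorname{Ad}_{m}(Y)$. As $\operatorname{Ad}_{m}$ preserves the Killing form, $X\in\operatorname{Ad}_{m}(W^{\perp})=(\operatorname{Ad}_{m}\mathfrak{h}+\operatorname{Ad}_{m}\mathfrak{l})^{\perp}$, so $X$ is a nonzero element of $\mathfrak{t}$ that is $B$-orthogonal to $\operatorname{Ad}_{m}\mathfrak{h}+\operatorname{Ad}_{m}\mathfrak{l}$. This is precisely the configuration excluded by the hypothesis (read for the good representative of the triple); thus the absence of any such $X$ forces $W^{\perp}=0$, i.e. $\mathfrak{m}_{0}\subseteq W$, and together with $V\subseteq W$ this yields $\mathfrak{g}=\mathfrak{h}+\mathfrak{l}$.

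The step I expect to require the most care is the localization $W^{\perp}\subseteq\mathfrak{m}_{0}$ coupled with the fact that $W^{\perp}$ need not meet $\mathfrak{t}$ directly: a priori it could sit inside $\mathfrak{m}_{0}$ away from the chosen Cartan subalgebra. This is exactly why the $M_{0}$-conjugation trick, and with it Lemma \ref{ge} guaranteeing that goodness is preserved, is indispensable; it also explains why the hypothesis must be understood for the good embedding up to the $M_{0}$-conjugation that Lemma \ref{ge} sanctions.
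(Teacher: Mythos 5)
Your proposal is correct and takes essentially the same route as the paper's own proof: both argue via the contrapositive, use Lemma \ref{lemma:a+n} together with the $B$-orthogonality of $\mathfrak{m}_{0}$ to $\mathfrak{a}+\sum_{\alpha\in\Sigma}\mathfrak{g}_{\alpha}$ to locate $(\mathfrak{h}+\mathfrak{l})^{\perp}$ inside $\mathfrak{m}_{0}$, and then conjugate by an element of the compact group $M_{0}$ (invoking Lemma \ref{ge} to preserve goodness) to move the orthogonal vector into $\mathfrak{t}$. The only cosmetic difference is how the nonzero orthogonal vector is produced: you use nondegeneracy of $B$ on $\mathfrak{g}$ and a dimension count, whereas the paper takes $X\notin\mathfrak{h}+\mathfrak{l}$, projects it into $\mathfrak{k}$ (possible since $\mathfrak{p}\subset\mathfrak{h}+\mathfrak{l}$), and orthogonalizes using the negative-definiteness of $B|_{\mathfrak{k}}$.
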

\begin{proof} Let $0 \neq X\in \mathfrak{g},$ $X\notin \mathfrak{h}+\mathfrak{l} .$ By Lemma \ref{lemma:a+n},  $\mathfrak{p} \subset \mathfrak{h}+\mathfrak{l}$, therefore, we may assume that $X\in \mathfrak{k} $. The Killing form $B$ is negative-definite on $\mathfrak{k}$, so  we can assume that  $X$ is $B$-orthogonal to $\mathfrak{h}+\mathfrak{l} .$ For any $\alpha \in \Sigma$ and any non-zero $x_{\alpha}\in \mathfrak{g}_{\alpha}$ there exists $H\in \mathfrak{a}$ such that $[H,x_{\alpha}]=x_{\alpha}$, therefore 
$B(Y,x_{\alpha})=B(Y,[H,x_{\alpha}])=B([Y,H],x_{\alpha})=0 ,$ for any $Y\in\mathfrak{m}_0$.
We see that  $\mathfrak{m}_{0}$ is $B$-orthogonal to $\mathfrak{a}+\sum_{\alpha\in\Sigma}\mathfrak{g}_{\alpha}$ and so ${(\mathfrak{h}+\mathfrak{l})}^{\perp}\subset \mathfrak{m}_{0}.$ Since $M_{0}$ is compact, any vector $X\in \mathfrak{m}_{0}$ is conjugate by some element of $M_{0}$ to a vector in the Cartan subalgebra $\mathfrak{t}$ of $\mathfrak{m}_{0}$ and so by Lemma \ref{ge} we may assume that $X\in \mathfrak{t} .$
\end{proof}

\begin{lemma} Under the assumptions of Theorem \ref{thm:root-decomp},
$$\mathfrak{n}^c=\sum_{\alpha_{c}\in \Delta_{m}^{+}} \mathfrak{g}_{\alpha_{c}}, \ \ (\mathfrak{n}^{-})^c=\sum_{\beta_c\in\Delta_m^{-}}\mathfrak{g}_{\beta_c}.$$

\label{le1}
\end{lemma}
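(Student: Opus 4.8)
The plan is to identify the complexified real root space $\mathfrak{g}_{\alpha}^{c}$ with the sum $\sum_{\alpha_{c}|_{\mathfrak{a}}=\alpha}\mathfrak{g}_{\alpha_{c}}$ of those complex root spaces whose restriction to $\mathfrak{a}$ equals $\alpha$, and then to check that the index set $\{\alpha_{c}\in\Delta\mid \alpha_{c}|_{\mathfrak{a}}\in\Sigma^{+}\}$ coincides with $\Delta_{m}^{+}$. The two identities in the lemma then drop out by assembling these pieces.

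First I would fix $\alpha\in\Sigma$ and set $W_{\alpha}:=\sum_{\alpha_{c}|_{\mathfrak{a}}=\alpha}\mathfrak{g}_{\alpha_{c}}$, so that by definition $\mathfrak{g}_{\alpha}=W_{\alpha}\cap\mathfrak{g}$. The claim is $\mathfrak{g}_{\alpha}^{c}=W_{\alpha}$, which amounts to showing that $W_{\alpha}$ is stable under the conjugation $\sigma$ of $\mathfrak{g}^{c}$ with respect to $\mathfrak{g}$, since a $\sigma$-stable complex subspace is always the complexification of its real points. As $\mathfrak{j}\subset\mathfrak{g}$, the map $\sigma$ fixes $\mathfrak{j}$ pointwise and preserves $\mathfrak{j}^{c}$, hence permutes the root spaces: $\sigma(\mathfrak{g}_{\alpha_{c}})=\mathfrak{g}_{\alpha_{c}'}$, where for $X\in\mathfrak{g}_{\alpha_{c}}$ and $H\in\mathfrak{j}$ one computes $[H,\sigma X]=\sigma([H,X])=\overline{\alpha_{c}(H)}\,\sigma X$, i.e. $\alpha_{c}'(H)=\overline{\alpha_{c}(H)}$ on $\mathfrak{j}$. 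Because every root is real-valued on $\mathfrak{a}$ (as $\mathfrak{a}\subset\mathfrak{p}$), we get $\alpha_{c}'|_{\mathfrak{a}}=\alpha_{c}|_{\mathfrak{a}}$, so $\sigma$ maps $W_{\alpha}$ into itself; being an involution it preserves $W_{\alpha}$, whence $W_{\alpha}=\mathfrak{g}_{\alpha}^{c}$.

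Next I would match the index sets. The compatibility of the chosen positive systems gives $\Sigma^{+}=\{\alpha_{c}|_{\mathfrak{a}}\mid\alpha_{c}\in\Delta^{+}\}\setminus\{0\}$. If $\alpha_{c}\in\Delta^{+}$ and $\alpha_{c}|_{\mathfrak{a}}\neq0$, then $\alpha_{c}|_{\mathfrak{a}}\in\Sigma^{+}$ directly; conversely, if $\alpha_{c}\in\Delta$ satisfies $\alpha_{c}|_{\mathfrak{a}}\in\Sigma^{+}$ but $\alpha_{c}\in\Delta^{-}$, then $-\alpha_{c}\in\Delta^{+}$ with $(-\alpha_{c})|_{\mathfrak{a}}=-\alpha_{c}|_{\mathfrak{a}}\in\Sigma^{+}$, forcing $-\alpha_{c}|_{\mathfrak{a}}\in\Sigma^{+}\cap\Sigma^{-}=\varnothing$, a contradiction. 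Hence $\{\alpha_{c}\in\Delta\mid\alpha_{c}|_{\mathfrak{a}}\in\Sigma^{+}\}=\{\alpha_{c}\in\Delta^{+}\mid\alpha_{c}|_{\mathfrak{a}}\neq0\}=\Delta^{+}\setminus\Delta_{m}=\Delta_{m}^{+}$. Combining this with the previous paragraph,
$$\mathfrak{n}^{c}=\Big(\bigoplus_{\alpha\in\Sigma^{+}}\mathfrak{g}_{\alpha}\Big)^{c}=\bigoplus_{\alpha\in\Sigma^{+}}\mathfrak{g}_{\alpha}^{c}=\bigoplus_{\alpha\in\Sigma^{+}}\ \sum_{\alpha_{c}|_{\mathfrak{a}}=\alpha}\mathfrak{g}_{\alpha_{c}}=\sum_{\alpha_{c}\in\Delta_{m}^{+}}\mathfrak{g}_{\alpha_{c}}.$$
The identity for $(\mathfrak{n}^{-})^{c}$ follows verbatim with $\Sigma^{+},\Delta_{m}^{+}$ replaced by $\Sigma^{-},\Delta_{m}^{-}$, or alternatively by applying $\theta$ and invoking $\theta(\mathfrak{g}_{\alpha})=\mathfrak{g}_{-\alpha}$.

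I expect the only genuinely delicate point to be the first step, namely verifying that $\sigma$ does not mix root spaces with different restrictions to $\mathfrak{a}$; everything afterwards is bookkeeping with the restriction map $pr$ and the disjointness $\Sigma=\Sigma^{+}\sqcup\Sigma^{-}$. The crucial input there is that roots are real-valued on $\mathfrak{a}$, which is exactly what yields $\alpha_{c}'|_{\mathfrak{a}}=\alpha_{c}|_{\mathfrak{a}}$ and hence the $\sigma$-stability of each $W_{\alpha}$.
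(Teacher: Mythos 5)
Your proof is correct, but it follows a genuinely different route from the paper's. The paper's own argument is a sandwich-plus-dimension count: from the definition $\mathfrak{g}_{\alpha}=\sum_{\alpha_{c}|_{\mathfrak{a}}=\alpha}\mathfrak{g}_{\alpha_{c}}\cap\mathfrak{g}$ it records only the inclusion $\mathfrak{n}\subset\sum_{\alpha_{c}\in\Delta_{m}^{+}}\mathfrak{g}_{\alpha_{c}}$ (with analogues for $\mathfrak{n}^{-}$ and $\mathfrak{m}_{0}$), and then compares the root-space decomposition of $\mathfrak{g}^{c}$ over $\Delta=\Delta_{m}^{+}\cup\Delta_{m}\cup\Delta_{m}^{-}$ with the complexified decomposition $\mathfrak{g}^{c}=\mathfrak{m}_{0}^{c}+\mathfrak{a}^{c}+(\mathfrak{n}^{-})^{c}+\mathfrak{n}^{c}$; since each summand of the latter sits inside the corresponding piece of the former and both sums exhaust $\mathfrak{g}^{c}$, all inclusions are forced to be equalities. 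You instead prove the finer, root-by-root identity $\mathfrak{g}_{\alpha}^{c}=\sum_{\alpha_{c}|_{\mathfrak{a}}=\alpha}\mathfrak{g}_{\alpha_{c}}$ by showing that each $W_{\alpha}$ is stable under the conjugation $\sigma$ (using that roots are real-valued on $\mathfrak{a}$), and then reduce the lemma to bookkeeping on index sets. Each approach has its merits: the paper's count is shorter given the formulas already set up in the Preliminaries and never needs to analyze how $\sigma$ permutes the complex root spaces, while your argument yields a sharper conclusion (an exact description of each $\mathfrak{g}_{\alpha}^{c}$ individually, not just of their sum) and avoids invoking the complexified Iwasawa-type decomposition of $\mathfrak{g}^{c}$; moreover, the $\sigma$-permutation computation you carry out is essentially the one the paper needs later anyway (Lemma \ref{sigma}). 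One notational caution: the paper overloads $\mathfrak{j}$, using it both for $\mathfrak{t}+\mathfrak{a}\subset\mathfrak{g}$ and for the real form $i\mathfrak{t}+\mathfrak{a}$ of $\mathfrak{j}^{c}$; your computation requires the first meaning (so that $\sigma$ fixes $\mathfrak{j}$ pointwise), which is indeed how you use it.
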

\begin{proof}
Because of symmetry, we write the necessary formulas only for   $\mathfrak{n}$ (although $\mathfrak{n}^{-}$ is also used). By the formulas from Preliminaries, for any $\alpha \in \Sigma^{+}$ 
$$\mathfrak{g}_{\alpha} := \sum_{\substack{\alpha_{c}\in \Delta_{m}^{+} \\ \alpha_{c}|_{\mathfrak{a}=\alpha}}} \mathfrak{g}_{\alpha_{c}}\cap \mathfrak{g}.$$
It follows that 
$\mathfrak{n}\subset \sum_{\alpha_c\in\Delta_m^+}\mathfrak{g}_{\alpha_c}.$
We also know that 
$\mathfrak{m}_{0}^c=\sum_{\alpha_{c}\in \Delta_{m}}\mathfrak{g}_{\alpha_c}.$
Since $\Delta=\Delta_{m}^{+}\cup\Delta_{m}\cup\Delta_{m}^{-}$
 and 
$$\mathfrak{g}^c=\mathfrak{m}_{0}^c+\mathfrak{a}^c+{(\mathfrak{n}^{-})}^c+\mathfrak{n}^c$$
we get $\dim_{\mathbb{R}}\mathfrak{n}=\dim_{\mathbb{C}}\mathfrak{n}^c$ and the proof follows.
\end{proof}
\begin{lemma} Under the assumptions of Theorem \ref{thm:root-decomp},
\begin{itemize}
\item The involution $\sigma$ permutes the root spaces given by $\Delta^{+}_{m}$ so that if $\alpha_{c}\in \Delta_{p}$ then $\sigma (\mathfrak{g}_{\alpha_{c}})\subset \mathfrak{g}_{\alpha_{c}^{'}}$ for some $\alpha_{c}^{'}\in \Delta_{n} $ (by abuse of notation we write $\sigma (\Delta_{p})=\Delta_{n}$). Thus
\begin{equation}
\Delta_{p}= \{ \alpha_{1},...,\alpha_{k} \} , \ \Delta_{0}=\{ \beta_{1},...,\beta_{t})  \} , 
\Delta_{n} = \{ \alpha_{k+1},...,\alpha_{2k}   \},\label{eq4}
\end{equation}
where $\alpha_i,\beta_j,$ denote the complex roots of $\mathfrak{g}^{c}.$
\item $\Delta_{p}\neq \emptyset .$ 
\end{itemize}
Analogous properties hold for $\Delta_{p}^{-}, \Delta_{n}^{-}$
\label{sigma}
\end{lemma}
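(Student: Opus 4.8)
The plan is to understand the conjugation $\sigma$ directly on the Cartan subalgebra $\mathfrak{j}^c$, translate its action into an action on $\Delta$, and then read off both assertions.

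First I would record how $\sigma$ moves the root spaces. Since $\sigma$ is the conjugation of $\mathfrak{g}^c$ with respect to $\mathfrak{g}$, it fixes $\mathfrak{t}+\mathfrak{a}$ pointwise and hence preserves $\mathfrak{j}^c=(\mathfrak{t}+\mathfrak{a})^c$; being an antilinear automorphism normalizing $\mathfrak{j}^c$, it permutes the one-dimensional root spaces, say $\sigma(\mathfrak{g}_{\alpha_c})=\mathfrak{g}_{\sigma\cdot\alpha_c}$ with $(\sigma\cdot\alpha_c)(H)=\overline{\alpha_c(\sigma H)}$ for $H\in\mathfrak{j}^c$ (in particular this is an equality of root spaces, not merely an inclusion, by one-dimensionality). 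Evaluating on $\mathfrak{a}\subset\mathfrak{g}$, where $\sigma$ acts as the identity and the roots are real, gives $(\sigma\cdot\alpha_c)|_{\mathfrak{a}}=\alpha_c|_{\mathfrak{a}}$; and since $X\in\mathfrak{t}\subset\mathfrak{g}$ forces $\sigma(iX)=-iX$ while $\alpha_c(iX)\in\mathbb{R}$, we obtain $(\sigma\cdot\alpha_c)(iX)=-\alpha_c(iX)$. This single computation is what everything rests on.

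For the first assertion, take $\alpha_c\in\Delta_p$ and put $\beta_c=\sigma\cdot\alpha_c$. Its restriction to $\mathfrak{a}$ equals $\alpha_c|_{\mathfrak{a}}\in\Sigma^+$, which is nonzero; since the ordering on $\Delta$ is chosen compatibly with $\Sigma^+$ (a root with nonzero $\mathfrak{a}$-part is positive precisely when that part lies in $\Sigma^+$), this forces $\beta_c\in\Delta_m^+$. On the other hand $\beta_c(iX)=-\alpha_c(iX)<0$, so in fact $\beta_c\in\Delta_n$. Thus $\sigma$ carries $\Delta_p$ into $\Delta_n$; as $\sigma$ is an involutive bijection of $\Delta$, it restricts to a bijection $\Delta_p\leftrightarrow\Delta_n$, whence $\#\Delta_p=\#\Delta_n=:k$. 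This is exactly what legitimizes the enumeration (\ref{eq4}) and the pairing $\alpha_i\leftrightarrow\alpha_{k+i}$ used later.

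For the second assertion I would argue by contradiction using Lemma \ref{lemma:root-sum}. If $\Delta_p=\emptyset$, then $\Delta_n=\emptyset$ by the bijection just established, so every root of $\Delta_m^+$ vanishes on $iX$; applying $\alpha_c\mapsto-\alpha_c$ the same holds for $\Delta_m^-$. Hence $C_{iX}\supseteq\Delta_m^+\cup\Delta_m^-=\Delta\setminus\Delta_m$. Choosing $H$ in the interior of a Weyl chamber of $\Sigma$ in $\mathfrak{a}$ (nonzero, since $\mathfrak{g}$ is noncompact), one has $\alpha_c(H)=0$ exactly for $\alpha_c\in\Delta_m$, i.e. $C_H=\Delta_m$. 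Then $\Delta=C_{iX}\cup C_H$, contradicting Lemma \ref{lemma:root-sum} (applicable because $\mathfrak{g}^c$ is simple, so $\Delta$ is indecomposable, and both $iX\neq0$ and $H\neq0$). Therefore $\Delta_p\neq\emptyset$. The stated analogues for $\Delta_p^-,\Delta_n^-$ follow verbatim after replacing each root by its negative (equivalently, applying $\theta$). The only real subtlety, and the step I would be most careful about, is keeping straight that $\sigma$ preserves positivity on $\Delta_m^+$ while simultaneously reversing the sign of $\alpha_c(iX)$; once this sign bookkeeping is pinned down, both parts are immediate.
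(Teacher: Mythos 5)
Your proposal is correct and follows essentially the same route as the paper: $\sigma$ preserves $\mathfrak{j}^c$ and satisfies $\sigma(iX)=-iX$, which flips the sign of $\alpha_c(iX)$ while fixing $\alpha_c|_{\mathfrak{a}}$, giving $\sigma(\Delta_p)=\Delta_n$; then $\Delta_p\neq\emptyset$ follows by contradiction from Lemma \ref{lemma:root-sum} applied to $iX$ and a nonzero $H\in\mathfrak{a}$. Your write-up is in fact slightly more careful than the paper's (you verify that $\sigma$ preserves positivity via the compatibility of $\Delta^+$ with $\Sigma^+$, and you sort the roots correctly into $C_{iX}$ and $C_H$, where the paper's grouping contains a small notational slip), but the underlying argument is identical.
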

\begin{proof}
Note that  $\sigma (\mathfrak{j}^{c})=\mathfrak{j}^{c}$, therefore,  $\sigma$ permutes the root spaces of $\mathfrak{g}^{c}.$ Let us  show that $\sigma(\Delta_{p})= \Delta_{n}$.  This follows from the observation:  $\sigma (iX) = -iX$ and thus for any $y\in \mathfrak{g}_{\alpha_{c}}$ 
$$[iX,\sigma (y)]=-\sigma[iX,y] =-\sigma (\alpha_{c} (iX)y)=-\alpha_{c}(iX)\sigma (y) .$$ 
By Lemma \ref{lemma:root-sum}, $\Delta_p\not=\emptyset$. Indeed if $\Delta_{p}=\emptyset$ then also $\Delta_{n}=\emptyset$ and (since $\tau (\Delta_{p})=\Delta_{p}^{-}$)  $\Delta_{p}^{-} = \Delta_{n}^{-}=\emptyset .$ But then $\Delta = \Delta_{m}\cup \Delta_{m}^{-}\cup \Delta_{0},$ and for any $0\neq H\in \mathfrak{a}$
$$\Delta_{m}, \Delta_{m}^{-}\subset \{  \alpha_{c}\in\Delta \ | \ \alpha_{c}(H)=0 \} ,$$
$$\Delta_{0}\subset \{  \alpha_{c}\in\Delta \ | \ \alpha_{c}(iX)=0 \} .$$
\end{proof}
\noindent In accordance with Lemma \ref{sigma}, 
$$\mathfrak{n}^c=\sum_{\alpha_c\in\Delta_p}\mathfrak{g}_{\alpha_c}\oplus\sum_{\beta_c\in\Delta_0}\mathfrak{g}_{\beta_c}\oplus\sum_{\alpha_c\in\Delta_n}\mathfrak{g}_{\alpha_c},$$
where $\Delta_{p}, $ $\Delta_{0}$ and $\Delta_{n}$ are given by (\ref{eq4}).
Let 
$$Z=\sum_{\alpha_c\in\Delta_p\cup\Delta_n}\mathfrak{g}_{\alpha_c}\subset\mathfrak{n}^c,$$
and let $\pi: \mathfrak{n}^{c}\rightarrow Z$ be the projection onto $Z$ given in the base described in Lemma \ref{lemtau}.
\begin{lemma} Assume that $X\in\mathfrak{t}$ satisfies the assumptions of Theorem \ref{thm:root-decomp}.
Let  $\omega: Z\times Z\rightarrow\mathbb{C}$ be a map defined  by the formula 
$$\omega (S_{1},S_{2})=-B_{\tau}([iX,S_{1}],S_{2}), \ \textrm{for any} \ S_{1},S_{2}\in Z.$$
The map $\omega$ is a non-degenerate Hermitian 
 form.
\label{le3}
\end{lemma}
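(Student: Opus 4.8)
The plan is to work entirely in the $\tau$-compatible Chevalley basis $\{\tilde{x}_\alpha\}$ provided by Lemma \ref{lemtau}, which diagonalizes everything in sight. First observe that this basis is $B_\tau$-orthonormal: since $\tau(\tilde{x}_\alpha)=\tilde{x}_{-\alpha}$ and $B(\tilde{x}_\alpha,\tilde{x}_{-\beta})=0$ unless $\alpha_c=\beta_c$, we get $B_\tau(\tilde{x}_\alpha,\tilde{x}_\beta)=-B(\tilde{x}_\alpha,\tilde{x}_{-\beta})=\delta_{\alpha\beta}$. In particular $B_\tau$ is positive-definite, so its restriction to any complex subspace, and to $Z$ in particular, is again non-degenerate. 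Next, since $X\in\mathfrak{t}\subset\mathfrak{m}_0\subset\mathfrak{k}\subset\mathfrak{g}_u$, the conjugation $\tau$ fixes $X$, hence $\tau(iX)=-iX$; and because $iX\in\mathfrak{j}=i\mathfrak{t}+\mathfrak{a}$, every root is real-valued on $iX$, so $\operatorname{ad}(iX)\tilde{x}_{\alpha_c}=\alpha_c(iX)\tilde{x}_{\alpha_c}$ with $\alpha_c(iX)\in\mathbb{R}$. Thus $\operatorname{ad}(iX)$ maps $Z=\operatorname{span}\{\tilde{x}_{\alpha_c}\mid\alpha_c\in\Delta_p\cup\Delta_n\}$ into itself and acts there as a real diagonal operator.

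The Hermitian property rests on the single structural fact that $\operatorname{ad}(iX)$ is self-adjoint with respect to $B_\tau$. I would first record the general identity that the $B_\tau$-adjoint of $\operatorname{ad}(A)$ equals $-\operatorname{ad}(\tau A)$: this follows from the invariance of $B$ together with $\tau$ being a conjugate-linear Lie algebra automorphism, via
$$B_\tau(\operatorname{ad}(A)u,v)=-B([A,u],\tau v)=B(u,[A,\tau v])=B(u,\tau([\tau A,v]))=B_\tau(u,-\operatorname{ad}(\tau A)v).$$
Taking $A=iX$ and using $\tau(iX)=-iX$ gives adjoint $=-\operatorname{ad}(-iX)=\operatorname{ad}(iX)$, i.e.\ self-adjointness. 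Then from $\omega(S_1,S_2)=-B_\tau(\operatorname{ad}(iX)S_1,S_2)$, the Hermitian symmetry of $B_\tau$ followed by self-adjointness yields $\overline{\omega(S_1,S_2)}=-B_\tau(S_2,\operatorname{ad}(iX)S_1)=-B_\tau(\operatorname{ad}(iX)S_2,S_1)=\omega(S_2,S_1)$. Equivalently, in the orthonormal basis the matrix of $\omega$ is $-\operatorname{diag}(\alpha_c(iX))$, a real diagonal matrix, which is manifestly Hermitian.

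Non-degeneracy is exactly where the standing hypotheses on $X$ are used. By the very definitions of $\Delta_p$ and $\Delta_n$, every $\alpha_c\in\Delta_p\cup\Delta_n$ satisfies $\alpha_c(iX)\neq 0$, and Lemma \ref{sigma} guarantees $\Delta_p\neq\emptyset$ so that $Z\neq 0$; hence the diagonal matrix $-\operatorname{diag}(\alpha_c(iX))$ has no zero entries and $\omega$ is non-degenerate. (Basis-free: if $\omega(S_1,\cdot)\equiv 0$ on $Z$, then $B_\tau(\operatorname{ad}(iX)S_1,\cdot)\equiv 0$ on $Z$; since $\operatorname{ad}(iX)$ preserves $Z$ and $B_\tau|_Z$ is non-degenerate, $\operatorname{ad}(iX)S_1=0$, and invertibility of $\operatorname{ad}(iX)$ on $Z$ forces $S_1=0$.) The proof carries no deep obstacle; the only points demanding care are the bookkeeping items — that $\tau(iX)=-iX$ (resting on $X\in\mathfrak{k}$), that the eigenvalues $\alpha_c(iX)$ are \emph{real} (because $iX\in\mathfrak{j}$), and that they are \emph{nonzero} precisely on $Z$ (by the definitions of $\Delta_p,\Delta_n$). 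Everything else is a formal consequence of $B_\tau$ being a positive-definite Hermitian form diagonalized by the Chevalley basis.
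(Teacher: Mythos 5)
Your proof is correct and takes essentially the same route as the paper: both work in the $\tau$-compatible basis of Lemma \ref{lemtau} and rest on the facts that $\alpha_c(iX)$ is real and nonzero for every $\alpha_c\in\Delta_p\cup\Delta_n$ and that $B_\tau$ is positive definite there --- the paper's non-degeneracy witness $S'=[iX,S]$, giving $\omega(S,[iX,S])=-B_\tau([iX,S],[iX,S])\neq 0$, is your diagonal-matrix argument in disguise. Your explicit check of Hermitian symmetry via the self-adjointness of $\operatorname{ad}(iX)$ with respect to $B_\tau$ fills in a step the paper leaves implicit (it only cites that $B_\tau$ is Hermitian), and your diagonalization also quietly corrects the paper's immaterial sign slip (the displayed quantity is negative, not positive, but in either case nonzero).
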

\begin{proof} The proof follows from the fact that $B_{\tau}$ is a Hermitian  form. Thus, only the non-degeneracy should be checked. Choose any $S\in Z$.  We need to find $S'$ such that $\omega(S,S')\not=0$. Take $S'=[iX,S]$.
One  checks that
$$\omega (S,[iX,S])=-B_{\tau}([iX,S],[iX,S])>0,$$
for any $S\in Z$. This follows from a straightforward calculation, if one writes $S=\sum_{\alpha_c\in\Delta_p\cup\Delta_n}c_{\alpha}x_{\alpha},c_{\alpha}\in\mathbb{C}$ and uses the formula $\tau(x_{\alpha})=x_{-\alpha}$  established in Lemma \ref{lemtau}, together with $iX\in\mathfrak{t}\subset\mathfrak{j}^c$.
\end{proof}

\begin{remark}
{\rm Note that $\tau$ is a complex conjugation of $\mathfrak{g}^c$ with respect to the compact real form $\mathfrak{u}=\mathfrak{k}+i\mathfrak{p}$. By the assumptions that the Cartan decomposition $\mathfrak{g}=\mathfrak{k}+\mathfrak{p}$ is inherited by $\mathfrak{h}$ and $\mathfrak{l}$ the compact real forms of $\mathfrak{h}^c$ and $\mathfrak{l}^c$ satisfy $\mathfrak{u}_h\subset\mathfrak{u}$ and $\mathfrak{u}_l\subset\mathfrak{u}$, and $\tau$ restricts onto $\mathfrak{h}^c$ and $\mathfrak{l}^c$ as a complex conjugation with respect to $\mathfrak{u}_h$ and $\mathfrak{u}_l$. Thus since $0\neq X\in\mathfrak{t}$ satisfies $X\perp \mathfrak{h}+\mathfrak{l}$, $\omega$  vanishes  on $\mathfrak{n}_{h}^c$ and on $\mathfrak{n}_{l}^c .$}
\end{remark}
\begin{proposition}\label{prop:zh} Assume that $X\in\mathfrak{t}$ satisfies the assumptions of Theorem \ref{thm:root-decomp} and put $Z_h=\pi(\mathfrak{n}_h^c), Z_l=\pi(\mathfrak{n}_l^c)$. Then
$$Z=Z_{h}\oplus [X,Z_{h}]=Z_{l}\oplus [X,Z_{l}]=Z_{h}\oplus Z_{l}=Z_{h}\oplus [X,Z_{l}].$$
\label{propbbb}
\end{proposition}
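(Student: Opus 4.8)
The plan is to push the entire statement into the complex vector space $Z$ and solve it with three tools: the nondegenerate Hermitian form $\omega$ of Lemma \ref{le3}, the operator $\operatorname{ad}(iX)$, and the splitting coming from $\mathfrak{n}^c=\mathfrak{n}_h^c\oplus\mathfrak{n}_l^c$. First I record the basic facts. Since every $\alpha_c\in\Delta_p\cup\Delta_n$ satisfies $\alpha_c(iX)\neq0$, the operator $\operatorname{ad}(iX)$ acts on each root line $\mathfrak{g}_{\alpha_c}\subset Z$ by the nonzero scalar $\alpha_c(iX)$; hence $\operatorname{ad}(iX)|_Z$ is invertible, and $[X,Z_h]=[iX,Z_h]$, $[X,Z_l]=[iX,Z_l]$ have the same dimensions as $Z_h$, $Z_l$. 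By the Remark following Lemma \ref{le3}, $\omega$ vanishes on $Z_h$ and on $Z_l$, so both are $\omega$-isotropic. Finally, surjectivity of $\pi$ together with $\mathfrak{n}^c=\mathfrak{n}_h^c\oplus\mathfrak{n}_l^c$ gives $Z=\pi(\mathfrak{n}^c)=Z_h+Z_l$.

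Next I would settle the three straightforward equalities. For a nondegenerate Hermitian form every totally isotropic subspace has dimension at most $\tfrac12\dim_{\mathbb C}Z$; applied to $Z_h$ and $Z_l$ this gives $\dim Z_h,\dim Z_l\le\tfrac12\dim Z$, and combined with $Z=Z_h+Z_l$ it forces $\dim Z_h=\dim Z_l=\tfrac12\dim Z$ and $Z_h\cap Z_l=0$, that is $Z=Z_h\oplus Z_l$. For $Z=Z_h\oplus[X,Z_h]$ one has $\dim[X,Z_h]=\dim Z_h=\tfrac12\dim Z$, so it suffices to show $Z_h\cap[X,Z_h]=0$: if $0\neq v=[iX,w]\in Z_h$ with $w\in Z_h$, then $\omega(w,v)=\omega(w,[iX,w])\neq0$ for $w\neq0$ by Lemma \ref{le3}, while $\omega(w,v)=0$ because $w,v$ both lie in the isotropic space $Z_h$, a contradiction. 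The identical argument with $Z_l$ in place of $Z_h$ yields $Z=Z_l\oplus[X,Z_l]$.

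The remaining equality $Z=Z_h\oplus[X,Z_l]$ is the crux, and I expect it to be the main obstacle. By the dimension count it is again equivalent to $Z_h\cap[X,Z_l]=0$, but here the previous trick breaks down: a hypothetical $0\neq v=[iX,w]\in Z_h$ with $w\in Z_l$ has $w$ and $v$ in \emph{different} isotropic subspaces, so isotropy gives no information and in fact $\omega(w,v)\neq0$ is precisely what the perfect pairing of $\omega$ between $Z_h$ and $Z_l$ predicts. The equality cannot hold by formal reasoning alone — it amounts to the positive form associated with $\omega$ pairing $Z_h$ with $Z_l$ nondegenerately, which may fail for an abstract pair of Lagrangians — so genuine input from the algebras $\mathfrak{n}_h,\mathfrak{n}_l$ must enter.

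To close this case I would combine the real structure with bracket-closedness. The space $W:=Z_h\cap[X,Z_l]$ is stable under the conjugation $\sigma$ of $\mathfrak{g}^c$ relative to $\mathfrak{g}$, since $\sigma(Z_h)=Z_h$ and $\sigma[iX,Z_l]=[iX,\sigma Z_l]=[iX,Z_l]$ (using $\sigma(iX)=-iX$ and $\sigma(Z_l)=Z_l$). If $W\neq0$ it therefore contains a nonzero $\sigma$-fixed, hence real, vector $v\in Z_h\cap\mathfrak{n}=\pi(\mathfrak{n}_h)$. Writing $v=[iX,w]$ with $w\in Z_l$, the relation $\sigma v=v$ together with injectivity of $\operatorname{ad}(iX)|_Z$ forces $\sigma w=-w$, so $w_l:=iw$ is a real vector of $Z_l\cap\mathfrak{n}=\pi(\mathfrak{n}_l)$ and $v=[X,w_l]$. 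Lifting through $\pi$ produces $n_h\in\mathfrak{n}_h$, $n_l\in\mathfrak{n}_l$ with $[X,n_l]\equiv n_h$ modulo $\ker\pi$, where $X$ normalizes $\mathfrak{n}$. The final step is to derive a contradiction from this identity using $\mathfrak{n}=\mathfrak{n}_h\oplus\mathfrak{n}_l$, the bracket-closedness of $\mathfrak{n}_h$ and $\mathfrak{n}_l$, and $X\perp\mathfrak{h}+\mathfrak{l}$; this is the only place where the subalgebra structure, rather than linear algebra on $Z$, is essential, and I expect it to carry the real weight of the proof.
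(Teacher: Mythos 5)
For three of the four equalities your argument is correct and is essentially the paper's own: $Z=Z_h+Z_l$ from $\mathfrak{n}=\mathfrak{n}_h\oplus\mathfrak{n}_l$; isotropy of $Z_h,Z_l$; $Z_h\cap[X,Z_h]=0$ via $\omega(w,[iX,w])\neq0$; and a dimension count. Two minor points of comparison. The Remark after Lemma \ref{le3} gives vanishing of $\omega$ on $\mathfrak{n}_h^c,\mathfrak{n}_l^c$, not on their images $Z_h,Z_l$; the paper inserts the one-line computation $\omega(\pi(N_1),\pi(N_2))=\omega(N_1,N_2)$ (valid because $\operatorname{ad}(iX)$ annihilates the $\Delta_0$-components), which your write-up silently assumes. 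Also, you obtain $\dim Z_h\le\frac12\dim Z$ from the general bound on isotropic subspaces of a nondegenerate Hermitian form, whereas the paper derives it from $Z_h\cap[X,Z_h]=0$ together with invertibility of $\operatorname{ad}X$; both are fine.

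The genuine issue is the fourth equality $Z=Z_h\oplus[X,Z_l]$. Your proposal reduces it correctly (via $\sigma$-stability of $Z_h\cap[X,Z_l]$) to the nonexistence of nonzero real vectors $\pi(n_h)\in\pi(\mathfrak{n}_h)$, $\pi(n_l)\in\pi(\mathfrak{n}_l)$ with $[X,\pi(n_l)]=\pi(n_h)$, but the promised contradiction is never derived, so this part of the statement remains unproved in your proposal. You should know, however, that the paper is in exactly the same position: its proof ends by establishing only $Z=Z_h\oplus Z_l=Z_h\oplus[X,Z_h]=Z_l\oplus[X,Z_l]$, and the equality $Z=Z_h\oplus[X,Z_l]$ asserted in Proposition \ref{prop:zh} is never addressed. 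Moreover, your reason for suspecting that formal linear algebra cannot suffice is sound: take $\dim_{\mathbb{C}}Z=2$ with $B_\tau$-orthonormal root vectors $e_1,e_2$, $\alpha_1(iX)=1=-\alpha_2(iX)$, and $\sigma$ the conjugate-linear swap of $e_1,e_2$; then $Z_h=\mathbb{C}(e_1+e_2)$ and $Z_l=\mathbb{C}(e_1-e_2)$ are transverse, $\sigma$-stable, $\omega$-isotropic, and satisfy every conclusion the paper actually proves, yet $[iX,Z_l]=Z_h$, so the fourth equality fails in this abstract configuration. Hence any correct proof must exploit the subalgebra structure and $X\perp\mathfrak{h}+\mathfrak{l}$ beyond what either you or the paper does. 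The saving grace is that this equality is never used later: parts 1--3 of Theorem \ref{thm:root-decomp} need only $\dim Z_h=\frac12\dim Z$ and $\omega|_{Z_h}=0$, and the proofs of Theorems \ref{thm:r-sub} and \ref{thm:split} need only these together with $Z=Z_h\oplus Z_l$. So your gap mirrors an omission in the paper rather than falling short of it; to be complete, either the missing step must be supplied or the equality $Z=Z_h\oplus[X,Z_l]$ should be dropped from the statement.
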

\begin{proof}
Observe that  the natural projection $\pi:\mathfrak{n}^c\rightarrow Z$ has the property
$\pi (\mathfrak{n}_{h}^c)+\pi (\mathfrak{n}_{l}^c)=Z$ and so $Z=Z_h+ Z_l$.
Note that under the assumptions of the Proposition  $\omega$  vanishes on $Z_h$ and on $Z_l.$ This can be shown as follows. One needs to keep in mind that $X\perp (\mathfrak{h}+\mathfrak{l})$ with respect to the Killing form $B$. Observe that $[iX,\pi(N_1)]=[iX,N_1]$, because $\alpha_c(iX)=0$ for all $\alpha_c\in\Delta_0$, and so
$$\omega(\pi_1(N_1),\pi (N_2))=\omega(N_1,N_2).$$
We see that $Z_{h} \cap [X,Z_{h}] = \{ 0 \} $, since otherwise $\omega $ would be non-zero on $Z_{h}$ by Lemma \ref{le3}. Analogously $Z_{l} \cap [X,Z_{l}]= \{ 0 \}$. Clearly, $\operatorname{ad}X(Z)=Z $ and, therefore $\operatorname{ad}X$ is an isomorphism. Thus, $\dim\,Z_h\leq \frac{1}{2}\dim\,Z$ and analogously, $\dim\,Z_l\leq \frac{1}{2}\dim Z$. Hence, the only possibility is that $Z=Z_h\oplus Z_l$ and  $\dim Z$ is even, 
$Z= Z_{h} \oplus [X,Z_{h}]=Z_l\oplus [X,Z_l]$.
\end{proof}
\subsection{Completion of proof of Theorem \ref{thm:root-decomp}}
\noindent{\it Proof of 1)}.
Since $\dim Z_{h} = \frac{1}{2}\dim Z=k$ by Proposition \ref{prop:zh}, $Z_{h}$ contains a non-zero vector of the form
$$A_{j}:=ax_{\alpha_{j}}+a_{1}^{j}x_{\alpha_{k+1}}+...+a_{k}^{j}x_{\alpha_{2k}}.$$
If $a=0$ then one easily shows that $\omega (A_{j},A_{j}) \neq 0$ and so $\omega$ is non-zero on $Z_{h}$. 
Clearly, $\{  S_{h}^{j} \ | \ j=1,...,k \} $ are linearly independent.
\vskip6pt
\hfill$\square$
 
\noindent{\it Proof of 2) and 3).}
Assume that there exists $A_{H}\in \mathfrak{a}_{h}$ and $S_{h}^{j}$ such that
$$[A_{H},S_{h}^{j}]\neq \alpha_{j}(A_{H})S_{h}^{j}.$$
If $\alpha_{j}(A_{H}) = 0$ then put $Y:=[A_{H},S_{h}^{j}]\in Z_{h}.$ Otherwise scale $A_{H}$ so that $\alpha_{j} (A_{H})=1$ and put $Y:=S_{h}^{j}-[A_{H},S_{h}^{j}]\in Z_{h}.$ One easily shows that $\omega (Y,Y) \neq 0$ and so $\omega$ is non-zero on $Z_{h}.$ The second part of the Lemma follows from the fact that $\mathfrak{n}_{h}^{c}\subset \mathfrak{n}^c .$
\vskip6pt
\hfill$\square$
\begin{remark}
{\rm Analogous  holds for $S_{l}^{i}.$}
\end{remark}

\section{Proof of Theorem \ref{thm:r-sub}}

\begin{lemma}
None of the non-zero root vectors $x_{\beta}$ for  $\beta_c\in \Delta_{0}$   centralizes $Z+\tau (Z) .$ 
\label{lemz}
\end{lemma}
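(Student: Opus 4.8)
The plan is to translate the centralizer condition into root-combinatorial data, deduce an orthogonality relation for $\beta_c$, and then contradict a covering of $\Delta$ by zero-sets via Lemma~\ref{lemma:root-sum}. The starting observation is that $Z+\tau(Z)=\bigoplus_{\gamma_c\in\mathcal I}\mathfrak g_{\gamma_c}$, where $\mathcal I:=\{\gamma_c\in\Delta\mid \gamma_c|_{\mathfrak a}\neq 0,\ \gamma_c(iX)\neq 0\}$: indeed $Z$ collects the root spaces of $\Delta_p\cup\Delta_n$, and since $\tau(\mathfrak g_{\gamma_c})=\mathfrak g_{-\gamma_c}$ (Lemma~\ref{lemtau}) the summand $\tau(Z)$ supplies exactly the opposite roots, so $\mathcal I=-\mathcal I$. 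Suppose $0\neq x_\beta$ with $\beta_c\in\Delta_0$ centralizes $Z+\tau(Z)$. For every $\gamma_c\in\mathcal I$ we have $[x_\beta,\mathfrak g_{\gamma_c}]=0$; because $\beta_c(iX)=0\neq\gamma_c(iX)$ forces $\gamma_c\neq-\beta_c$, this is equivalent to $\beta_c+\gamma_c\notin\Delta$. As both $x_{\gamma}$ and $x_{-\gamma}$ are centralized, the Jacobi identity applied to $[x_\beta,[x_{\gamma},x_{-\gamma}]]=-[x_\beta,h_{\gamma}]$ yields $\beta_c(h_{\gamma})x_\beta=0$, whence $(\beta_c,\gamma_c)=0$ for every $\gamma_c\in\mathcal I$.

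Writing $C_{H}=\{\gamma_c\in\Delta\mid\gamma_c(H)=0\}$ and $\Delta_m=\{\gamma_c\mid\gamma_c|_{\mathfrak a}=0\}$, the previous step says $\mathcal I\subseteq C_{t_\beta}$, so every root lies in $\Delta_m\cup C_{iX}\cup C_{t_\beta}$. Since $iX\neq 0$ and $t_\beta\neq 0$, Lemma~\ref{lemma:root-sum} gives $\Delta\neq C_{iX}\cup C_{t_\beta}$; hence there is a root $\mu_c$ with $\mu_c(iX)\neq 0$ and $(\beta_c,\mu_c)\neq 0$, and as $\mu_c\notin C_{t_\beta}\supseteq\mathcal I$ while $\mu_c(iX)\neq 0$, it must satisfy $\mu_c|_{\mathfrak a}=0$, i.e. $\mu_c\in\Delta_m$. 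Replacing $\mu_c$ by $-\mu_c$ if necessary, I may assume $(\beta_c,\mu_c)<0$, so $\beta_c+\mu_c\in\Delta$ by Proposition~\ref{roots}.

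If $\beta_c|_{\mathfrak a}\neq 0$, then $\beta_c+\mu_c\in\mathcal I$, since its restriction is $\beta_c|_{\mathfrak a}\neq 0$ and its value on $iX$ is $\mu_c(iX)\neq 0$; therefore $(\beta_c,\beta_c+\mu_c)=0$, forcing $(\beta_c,\mu_c)=-|\beta_c|^2$. Because $(\beta_c,\beta_c+\mu_c)=0$ the $\beta_c$-string through $\beta_c+\mu_c$ is symmetric, and it already contains $\mu_c=(\beta_c+\mu_c)-\beta_c$, so $2\beta_c+\mu_c=(\beta_c+\mu_c)+\beta_c\in\Delta$. This root again lies in $\mathcal I$, yet $(\beta_c,2\beta_c+\mu_c)=2|\beta_c|^2+(\beta_c,\mu_c)=|\beta_c|^2\neq 0$, contradicting $\mathcal I\subseteq C_{t_\beta}$. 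This disposes of all $\beta_c$ with nonzero restriction.

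The main obstacle is the remaining case $\beta_c|_{\mathfrak a}=0$, i.e. $\beta_c\in\Delta_m$ is a root of the centralizer $\mathfrak m_0$: now $\beta_c+\mu_c$ again has zero restriction, so adding $\beta_c$-multiples keeps us inside $\Delta_m$ and never reaches $\mathcal I$. To handle it I would apply Lemma~\ref{lemma:root-sum} a second time, to a generic $H_0\in\mathfrak a$ (for which $C_{H_0}=\Delta_m$) together with $t_\beta$, producing a root $\rho_c$ with $\rho_c|_{\mathfrak a}\neq 0$, $(\beta_c,\rho_c)\neq 0$, and necessarily $\rho_c(iX)=0$; combining $\rho_c$ with an $iX$-nonvanishing element of $\Delta_m$ along a suitable root string should produce an element of $\mathcal I$ non-orthogonal to $\beta_c$. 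The delicate point, which I expect to be the crux of the lemma, is to coordinate the signs so that such a combination \emph{simultaneously} retains nonzero restriction to $\mathfrak a$ and nonzero value on $iX$; securing this requires a root-string and indecomposability argument that connects $\Delta_m$ to $\Delta\setminus\Delta_m$, iterating Lemma~\ref{lemma:root-sum} if needed.
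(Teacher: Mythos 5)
Your argument for the case $\beta_c|_{\mathfrak{a}}\neq 0$ is correct, and in fact that case \emph{is} the whole lemma: the case you flag as ``the main obstacle'' is vacuous. Although the displayed definition of $\Delta_0$ before Theorem~\ref{thm:root-decomp} literally reads $\{\alpha_c\in\Delta^+ \mid \alpha_c(iX)=0\}$, everywhere it is used $\Delta_0$ means the set of roots in $\Delta_m^+$ (i.e.\ with \emph{nonzero} restriction to $\mathfrak{a}$) vanishing on $iX$: Lemma~\ref{le1} together with Lemma~\ref{sigma} decomposes $\mathfrak{n}^c=\sum_{\alpha_c\in\Delta_m^+}\mathfrak{g}_{\alpha_c}$ as the sum over $\Delta_p\sqcup\Delta_0\sqcup\Delta_n$, which forces $\Delta_0\subset\Delta_m^+$, and the paper's own proof of this lemma states ``$H\neq 0$, since $\beta\in\Delta_0$,'' which is exactly the assertion $\beta_c|_{\mathfrak{a}}\neq 0$. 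So roots of $\mathfrak{m}_0$ never lie in $\Delta_0$, your second case does not arise, and your proof is complete. (Note also that the paper's proof has the same scope as yours: with $H=0$ its covering argument collapses, just as your string argument needs $\beta_c+\mu_c\in\mathcal{I}$.)

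Your route is genuinely different from the paper's. The paper invokes Lemma~3.4.3 of Collingwood--McGovern: the centralizer of the nilpotent root vector $x_\beta$ lies in the non-negative eigenspaces of $\operatorname{ad}H_\beta$, where $H_\beta=iT+H$ is the $B$-dual of $\beta_c$; it then applies Lemma~\ref{lemma:root-sum} to the pair $(X,H)$ to produce $\alpha_c\in\Delta_p\cup\Delta_n\cup\Delta_p^-\cup\Delta_n^-$ with $\alpha_c(H)\neq 0$, and a sign/$\sigma$-conjugation argument upgrades this to a root of that set with $\alpha_c(H_\beta)<0$, i.e.\ a strictly negative eigenvalue inside the centralizer --- a contradiction. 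You instead extract the stronger intermediate fact $(\beta_c,\gamma_c)=0$ for every $\gamma_c\in\mathcal{I}$ purely from the Jacobi identity and the normalization of Lemma~\ref{lemtau} (a small nit: in that basis $[\tilde{x}_\gamma,\tilde{x}_{-\gamma}]=-t_\gamma$ rather than $-h_\gamma$, but any nonzero multiple of the coroot serves), then apply Lemma~\ref{lemma:root-sum} to the pair $(iX,t_\beta)$, and finish with an unbroken-root-string computation yielding $2\beta_c+\mu_c\in\mathcal{I}$ non-orthogonal to $\beta_c$. What you gain is self-containedness: no nilpotent-orbit theory, only root-system combinatorics already present in the paper. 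What the paper's $\mathfrak{sl}_2$-grading buys is that the sign bookkeeping you perform by hand (choosing $-\mu_c$, tracking the string through $\beta_c+\mu_c$) is absorbed automatically by the positivity of the grading on the centralizer.
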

\begin{proof}
Assume there exists $x_{\beta},\beta_c\in\Delta_0$ such that  $[x_{\beta},Z+\tau(Z)]=0$.  Take $H_{\beta}\in i\mathfrak{t}+\mathfrak{a}$, defined, as usual by the condition $B(\tilde{H},H_{\beta})=\beta(\tilde{H})$ for any $\tilde{H}\in i\mathfrak{t}+\mathfrak{a}$.  Denote by $\mathfrak{g}^{x_{\beta}}$ the centralizer of $x_{\beta}$ in $\mathfrak{g}^c$. By \cite{col}, Lemma 3.4.3
$$\mathfrak{g}^{x_{\beta}} = \oplus_{j\geq 0} (\mathfrak{g}^c\cap \mathfrak{g}_{j}),$$
where $\mathfrak{g}_j$ denote the eigenspaces of $\operatorname{ad}\,H_{\beta}$ (which are integers $j\geq 0$). Write $H_{\beta}=iT+H$, where $T\in\mathfrak{t},H\in\mathfrak{a}$. Note that $H\not=0$, since $\beta\in\Delta_0$. We have
$$H=\frac{1}{2}(H_{\beta}+\sigma(H_{\beta})).$$
Define
 $$ C_X=\{\alpha_c\in\Delta\,|\alpha_c(X)=0\},\,C_H=\{\alpha_c\in\Delta\,\,|\,\alpha_c(H)=0\}.$$
Consider the decomposition
$$\Delta=\Delta_{m}\cup\Delta_p\cup\Delta_n\cup\Delta_0\cup\Delta_p^{-}\cup\Delta_n^{-}.$$
 Note that for any $\mu_c\in\Delta_{m}$, $\mu_c(H)=0$. Assume that 
\begin{equation}
\Delta_p\cup\Delta_n\cup\Delta_p^{-}\cup\Delta_n^{-} \subset C_H. \label{eq5}
\end{equation}
Since $\Delta_0\subset C_X$, and all roots from $\Delta_{m}$ vanish on $H,$ equality (\ref{eq5}) would imply  
$$\Delta=C_X\cup C_H,$$
but this contradicts Lemma \ref{lemma:root-sum}. Thus there exists $\alpha_c,-\alpha_{c}\in\Delta_p\cup\Delta_n\cup\Delta_p^{-}\cup\Delta_n^{-}$ such that $\alpha_c(H)\not=0$. We have one of the following possibilities
\begin{enumerate}
	\item $\alpha_{c}(iT)\neq -\alpha_{c}(H).$ In this case (since one can choose between $\alpha_c$ and $-\alpha_c$) we may assume that $\alpha_{c}(H_{\beta})<0.$
	\item $\alpha_{c}(iT)= -\alpha_{c}(H).$ In this case we see that $\sigma (\alpha_{c})(H_{\beta})=\alpha_{c}(-iT+H)=2\alpha_{c}(H)\neq 0$ and by Lemma \ref{sigma}, $\sigma (\alpha_{c})\in \Delta_p\cup\Delta_n\cup\Delta_p^{-}\cup\Delta_n^{-}.$ In this case (since one can choose between $\sigma(\alpha_c)$ and $-\sigma(\alpha_c)$) we may assume that $\sigma(\alpha_{c})(H_{\beta})<0.$
\end{enumerate}
Therefore in any case there exists $\alpha_{c}\in\Delta_p\cup\Delta_n\cup\Delta_p^{-}\cup\Delta_n^{-}$ such that $\alpha_{c}(H_{\beta})<0.$ But for any root vector $x_{\alpha}\in Z+\tau(Z)$ we have $\operatorname{ad}_{H_{\beta}}(x_{\alpha})=\alpha_c(H_{\beta})x_{\alpha}$ and so for $x_{\alpha}\in\mathfrak{g}^{x_{\beta}},$ $j=\alpha_c(H_{\beta})$. This is a contradiction with the inequality $j\geq 0$. The proof of the Lemma is complete.
\end{proof}

\begin{proposition}
Let $0\neq A\in \mathfrak{a}$ and assume that $\mathfrak{h}$ is contained in the centralizer of $A$ in $\mathfrak{g} .$ Then $G/H$ does not admit standard compact Clifford-Klein forms.
\end{proposition}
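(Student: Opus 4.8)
The plan is to argue by contradiction: suppose $G/H$ carries a standard compact Clifford--Klein form, realized by a reductive $\mathfrak{l}$. By Proposition \ref{prop:good} I may assume $\mathfrak{h},\mathfrak{l}$ are well embedded, so Lemma \ref{lemma:a+n} gives $\mathfrak{a}+\sum_{\gamma\in\Sigma}\mathfrak{g}_\gamma\subset\mathfrak{h}+\mathfrak{l}$, while the centralizer hypothesis complexifies to $\mathfrak{h}^c\subset Z_{\mathfrak{g}^c}(A)=\mathfrak{j}^c+\sum_{\alpha_c(A)=0}\mathfrak{g}_{\alpha_c}$. Thus every $\mathfrak{j}^c$-root component of any element of $\mathfrak{h}^c$ is supported on complex roots vanishing on $A$. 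I then split into two cases according to Proposition \ref{propx}, depending on whether there is a nonzero $X\in\mathfrak{t}$ that is $B$-orthogonal to $\mathfrak{h}+\mathfrak{l}$.

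In the principal case such an $X$ exists and Theorem \ref{thm:root-decomp} applies. The key point is that its part (1) produces a basis $S_h^i=x_{\alpha_i}+\sum_l a^i_{k+l}x_{\alpha_{k+l}}$ of $Z_h=\pi(\mathfrak{n}_h^c)$ whose leading roots $\alpha_i$ run over all of $\Delta_p$. Since each $S_h^i=\pi(N)$ for some $N\in\mathfrak{n}_h^c\subset\mathfrak{h}^c\subset Z_{\mathfrak{g}^c}(A)$, the $\mathfrak{g}_{\alpha_i}$-component $x_{\alpha_i}\neq 0$ of $N$ forces $\alpha_i(A)=0$; hence $\Delta_p\subset C_A$, where $C_A=\{\alpha_c\in\Delta:\alpha_c(A)=0\}$. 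I would then spread this to the whole of $Z$ and its $\tau$-conjugate using Lemma \ref{sigma}: as $A\in\mathfrak{g}$ is real and lies in $\mathfrak{p}$, one has $\sigma(A)=A$ and $\tau(A)=-A$, so $(\sigma\alpha_c)(A)=\alpha_c(A)$ and $(\tau\alpha_c)(A)=-\alpha_c(A)$; combined with $\sigma(\Delta_p)=\Delta_n$, $\tau(\Delta_p)=\Delta_p^-$ this gives $\Delta_p\cup\Delta_n\cup\Delta_p^-\cup\Delta_n^-\subset C_A$.

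To close this case I observe that the remaining roots are already controlled: every root of $\Delta_m$ restricts to $0$ on $\mathfrak{a}$, hence vanishes on $A$, so $\Delta_m\subset C_A$, whereas $\Delta_0\subset C_{iX}$ by definition. Since every positive root lies in $\Delta_m$, $\Delta_p$, $\Delta_n$ or $\Delta_0$, and both $C_A$ and $C_{iX}$ are symmetric under $\alpha_c\mapsto-\alpha_c$, this yields $\Delta=C_{iX}\cup C_A$, with $iX$ and $A$ nonzero vectors of the real space $\mathfrak{j}=i\mathfrak{t}+\mathfrak{a}$ on which the roots are real-valued. As $\mathfrak{g}$ is absolutely simple, $\Delta$ is indecomposable, so this contradicts Lemma \ref{lemma:root-sum}.

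The remaining, and genuinely harder, case is the Onishchik one, $\mathfrak{g}=\mathfrak{h}+\mathfrak{l}$, in which Theorem \ref{thm:root-decomp} is unavailable. Here I would first record that $A$ lies in the centre of $\mathfrak{q}=Z_\mathfrak{g}(A)$ and that $\mathfrak{h}$, being semisimple, lies in $[\mathfrak{q},\mathfrak{q}]$, so that $B(A,\mathfrak{h})=0$; moreover, by the modular law, $\mathfrak{q}=\mathfrak{h}+(\mathfrak{l}\cap\mathfrak{q})$ is itself a factorization with compact intersection. The difficulty—and what I expect to be the main obstacle—is that this reduction lands in the reductive, non-simple algebra $\mathfrak{q}$, in which such factorizations genuinely occur (already $\mathbb{R}A\oplus\mathfrak{sl}(2,\mathbb{R})$ admits one), so no contradiction can be extracted at the level of $\mathfrak{q}$ and the simplicity of $\mathfrak{g}$ must be used. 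I would resolve it by appealing to the classical classification of factorizations of simple Lie algebras with compact intersection (Onishchik), from which one reads that the noncompact factor is always an S-subalgebra and is in particular never contained in a proper Levi subalgebra $Z_\mathfrak{g}(A)$; thus the Onishchik case is incompatible with the standing hypothesis $\mathfrak{h}\subset Z_\mathfrak{g}(A)$, completing the proof.
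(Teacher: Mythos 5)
Your two-case structure (via Proposition \ref{propx}: either $\mathfrak{g}=\mathfrak{h}+\mathfrak{l}$, or some nonzero $X\in\mathfrak{t}$ is $B$-orthogonal to $\mathfrak{h}+\mathfrak{l}$) is exactly the paper's, but your treatment of the non-Onishchik case is correct and genuinely different. The paper goes through Lemma \ref{lemz}: it produces a single root $\alpha\in\Delta_p\cup\Delta_n$ with $\alpha(A)\neq 0$ and then contradicts the centralizer hypothesis inside one root space $\mathfrak{h}_\gamma^c$ using parts 2--3 of Theorem \ref{thm:root-decomp}. You run the implication the other way: part 1 of Theorem \ref{thm:root-decomp} (together with $\dim Z_h=k=\lvert\Delta_p\rvert$ from Proposition \ref{prop:zh}, which forces the leading roots of the basis $S_h^i$ to exhaust $\Delta_p$ --- two equal leading roots would yield a nonzero vector of $Z_h$ supported on $\Delta_n$, on which $\omega\neq 0$) combined with $\mathfrak{h}^c\subset Z_{\mathfrak{g}^c}(A)$ gives $\Delta_p\subset C_A$; the symmetries $\sigma(\Delta_p)=\Delta_n$, $\tau(\alpha_c)=-\alpha_c$, with $\sigma(A)=A$, $\tau(A)=-A$ and the reality of root values on $i\mathfrak{t}+\mathfrak{a}$, spread this to $\Delta_p\cup\Delta_n\cup\Delta_p^-\cup\Delta_n^-$; since also $\Delta_m\subset C_A$ and $\pm\Delta_0\subset C_{iX}$, you get $\Delta=C_A\cup C_{iX}$, contradicting Lemma \ref{lemma:root-sum}. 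This is sound, and in fact more economical than the paper: Lemma \ref{lemz} is itself proved from Lemma \ref{lemma:root-sum}, so you apply the root-system lemma directly where the paper routes it through an intermediate centralizer statement.

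The gap is in your Onishchik case. The reduction $\mathfrak{q}=Z_\mathfrak{g}(A)=\mathfrak{h}+(\mathfrak{l}\cap\mathfrak{q})$ is fine, but the classification claim you invoke to finish --- that ``the noncompact factor is always an S-subalgebra'' --- is false. In every triple of Table \ref{tttab1} \emph{both} factors are noncompact, and while the $\mathfrak{h}$-column entries are indeed S-subalgebras, several $\mathfrak{l}$-column entries are R-regular: $\mathfrak{su}(1,2n)\subset\mathfrak{su}(2,2n)$ complexifies to the regular subalgebra $\mathfrak{sl}(2n+1,\mathbb{C})\subset\mathfrak{sl}(2n+2,\mathbb{C})$; $\mathfrak{so}(1,4)\subset\mathfrak{so}(3,4)$ is the vector embedding, contained in the regular subalgebra $\mathfrak{so}(1,4)\oplus\mathfrak{so}(2)$; $\mathfrak{sp}(1,n)\subset\mathfrak{so}(4,4n)$ lies in the regular subalgebra $\mathfrak{u}(2,2n)$. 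Since a decomposition with compact intersection satisfying $d(\mathfrak{g})=d(\mathfrak{h})+d(\mathfrak{l})$ is symmetric in its two factors (each acts properly and cocompactly on the quotient by the other), the $\mathfrak{h}$ of your hypothesis may a priori be an $\mathfrak{l}$-column entry, and for those S-regularity gives you nothing. What you actually need --- and what the paper asserts, also without detailed proof, as ``no proper real R-subalgebra is contained in this table'' --- is that no entry of \emph{either} column is contained in $Z_\mathfrak{g}(A)$ for any $0\neq A\in\mathfrak{a}$. This is true, but for the regular entries it requires separate verification: e.g.\ $\dim\mathfrak{su}(1,2n)=4n^2+4n$ exceeds the dimension of every proper Levi subalgebra of $\mathfrak{su}(2,2n)$, and $\mathfrak{sp}(1,n)$ has no faithful real representation of dimension less than $4n+4$, so it cannot embed in $\mathbb{R}\oplus\mathfrak{so}(3,4n-1)$ or any smaller Levi subalgebra of $\mathfrak{so}(4,4n)$. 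None of this is ``read off'' from Onishchik's list, so your Onishchik case as written rests on an incorrect statement and needs this column-by-column check to close.
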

\begin{proof}
There are two possibilities for the triple $(\mathfrak{g},\mathfrak{h},\mathfrak{l})$: either $\mathfrak{g}=\mathfrak{h}+\mathfrak{l}$ (and then it must belong to  Table \ref{tttab1}), or $\mathfrak{g}\not=\mathfrak{h}+\mathfrak{l}$. However, no proper real R-subalgebra is contained in this table. Thus, we should assume the conclusion of Theorem \ref{thm:root-decomp} holds. First, we claim that that there exists a root in $\alpha\in \Delta_{p}\cup \Delta_{n}$ such that $\alpha (A)\neq 0.$ Indeed, since $\mathfrak{g}$ is simple, then for some $\hat{\gamma} \in \Delta_{p}\cup \Delta_{0}\cup \Delta_{n},$ we have $\hat{\gamma} (A)\neq 0.$ If $\hat{\gamma}\in \Delta_{0}$ then by Lemma \ref{lemz} we have $\hat{\gamma} = \alpha_{1}+\alpha_{2}$ for some $\alpha_{1},\alpha_{2}\in \Delta_{p}\cup \Delta_{n}\cup \Delta_{p}^{-}\cup \Delta_{n}^{-}$ and so $\alpha_{1}(A)\neq 0$ or $\alpha_{2}(A)\neq 0.$ In the latter case put $\alpha=\pm\alpha_1$, or $\alpha=\pm\alpha_2$. Thus, in any case $\alpha(A)\not=0$ for some $\alpha\in\Delta_p\cup\Delta_n$.

If $\mathfrak{h}$ is a proper real R-subalgebra then $\mathfrak{h}$ is contained in the centralizer of some non-zero $A\in\mathfrak{a} .$ But now by Theorem \ref{thm:root-decomp}, part 3, $\alpha|_{\mathfrak{a}_h}=\gamma\in\Sigma_h$ (or $\sigma(\alpha)|_{\mathfrak{a}_h}=\gamma\in\Sigma_h$) and so $\mathfrak{h}_{\gamma}^c$ does not centralize $A$.  The proof of the Proposition and the proof of Theorem \ref{thm:r-sub} are complete.
\end{proof}

\section{Proof of Theorem \ref{thm:split}}
We have the following
\begin{proposition}
Under the assumptions of Theorem \ref{thm:split}, if $\mathfrak{g}$ is split or both $\mathfrak{h}, \mathfrak{l}$ are split then $\mathfrak{g}= \mathfrak{h} + \mathfrak{l}.$
\end{proposition}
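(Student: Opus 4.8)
The plan is to derive the Onishchik identity $\mathfrak{g}=\mathfrak{h}+\mathfrak{l}$ from Proposition \ref{propx}, whose contrapositive reduces everything to controlling $\mathfrak{m}_0$ and $\mathfrak{t}$. Recall from the proof of Proposition \ref{propx} that $(\mathfrak{h}+\mathfrak{l})^{\perp}\subset\mathfrak{m}_0$, that every vector of $\mathfrak{m}_0$ is $M_0$-conjugate into $\mathfrak{t}$, and that $\mathfrak{m}_0$ is $B$-orthogonal to $\mathfrak{a}+\sum_{\alpha\in\Sigma}\mathfrak{g}_\alpha$. Together with Lemma \ref{lemma:a+n} this shows that proving $\mathfrak{g}=\mathfrak{h}+\mathfrak{l}$ is the same as proving that no nonzero $X\in\mathfrak{t}$ is $B$-orthogonal to $\mathfrak{h}+\mathfrak{l}$, equivalently that $\mathfrak{m}_0\subset\mathfrak{h}+\mathfrak{l}$. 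So I would organise the proof around the structure of $\mathfrak{m}_0$. The split-$\mathfrak{g}$ case is then immediate: if $\mathfrak{g}$ is split, then $\mathfrak{a}$ is a Cartan subalgebra of $\mathfrak{g}$, so $\mathfrak{m}_0=0$ and $\mathfrak{t}=0$, whence $(\mathfrak{h}+\mathfrak{l})^{\perp}\subset\mathfrak{m}_0=0$ and $\mathfrak{g}=\mathfrak{h}+\mathfrak{l}$.

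For the case in which both $\mathfrak{h}$ and $\mathfrak{l}$ are split, I would first record the structural consequence of splitness. Since $\mathfrak{h}$ is split, $\mathfrak{m}_{0,h}=\mathfrak{z}_{\mathfrak{k}_h}(\mathfrak{a}_h)=0$, and a direct weight computation shows $\mathfrak{h}_\gamma\subset\sum_{\alpha|_{\mathfrak{a}_h}=\gamma}\mathfrak{g}_\alpha$ for every $0\neq\gamma\in\Sigma_h$; hence $\mathfrak{h}=\mathfrak{a}_h\oplus\sum_{\gamma}\mathfrak{h}_\gamma\subset\mathfrak{a}+\sum_{\alpha\in\Sigma}\mathfrak{g}_\alpha$, and likewise for $\mathfrak{l}$. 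Combined with the reverse inclusion of Lemma \ref{lemma:a+n}, this yields the exact identity $\mathfrak{h}+\mathfrak{l}=\mathfrak{a}+\sum_{\alpha\in\Sigma}\mathfrak{g}_\alpha$. As the right-hand side is exactly $\mathfrak{m}_0^{\perp}$, we get $(\mathfrak{h}+\mathfrak{l})^{\perp}=\mathfrak{m}_0$, so $\mathfrak{g}=\mathfrak{h}+\mathfrak{l}$ is now equivalent to $\mathfrak{m}_0=0$, i.e. to $\mathfrak{g}$ being split. Thus the entire content of the both-split case is the assertion that a standard compact triple with $\mathfrak{h},\mathfrak{l}$ split cannot have $\mathfrak{m}_0\neq0$.

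I expect this assertion to be the main obstacle, and I would prove it by contradiction with Theorem \ref{thm:root-decomp}. Assume $\mathfrak{m}_0\neq0$; being a nonzero algebra of compact type it has nonzero maximal torus, so $\mathfrak{t}\neq0$, and any $0\neq X\in\mathfrak{t}$ is automatically $B$-orthogonal to $\mathfrak{h}+\mathfrak{l}$ by the identity just proved. Hence Theorem \ref{thm:root-decomp} applies and $\Delta_p\neq\emptyset$. Splitness forces $\dim_{\mathbb{C}}\mathfrak{h}_\gamma^c=\dim_{\mathbb{C}}\mathfrak{l}_\gamma^c=1$ for all restricted roots $\gamma$, so part (3) of Theorem \ref{thm:root-decomp} makes both restriction maps $\Delta_p\to\Sigma_h^+$ and $\Delta_p\to\Sigma_l^+$ injective and rigidly prescribes the $\Delta_0$-contributions to $\mathfrak{n}_h^c$ and $\mathfrak{n}_l^c$. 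I would then feed this rigidity into Lemma \ref{lemz}, exactly as in the proof of Theorem \ref{thm:r-sub}: the vanishing eigenvalue of $\operatorname{ad}(iX)$ on $\sum_{\beta_c\in\Delta_0}\mathfrak{g}_{\beta_c}$ must be reconciled with the requirement that $\mathfrak{n}_h^c$ and $\mathfrak{n}_l^c$ be nilradicals of Borel subalgebras of the split algebras $\mathfrak{h}^c,\mathfrak{l}^c$, and this should be impossible unless $\Delta_m=\emptyset$ and $\mathfrak{t}=0$, giving the contradiction.

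The delicate point — and the step I expect to require the most care — is extracting this final contradiction cleanly from the one-dimensionality of the split root spaces together with Lemma \ref{lemz}, because splitness of $\mathfrak{h}$ does \emph{not} in itself make $\mathfrak{h}$ an R-regular subalgebra (the set of restricted roots $\alpha|_{\mathfrak{a}}$ occurring in a fixed $\mathfrak{h}_\gamma$ need not be $\mathfrak{a}$-homogeneous or closed), so Theorem \ref{thm:r-sub} cannot simply be invoked and the argument must be run through the root-space bookkeeping of Theorem \ref{thm:root-decomp} directly.
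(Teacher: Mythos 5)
Your first case ($\mathfrak{g}$ split) is correct and coincides with the paper's argument: $\mathfrak{m}_0=0$, so Lemma \ref{lemma:a+n} gives $\mathfrak{g}=\mathfrak{h}+\mathfrak{l}$. Your reduction of the both-split case is also sound: splitness of $\mathfrak{h}$ and $\mathfrak{l}$ gives $\mathfrak{h}+\mathfrak{l}=\mathfrak{a}+\sum_{\alpha\in\Sigma}\mathfrak{g}_{\alpha}$, hence $(\mathfrak{h}+\mathfrak{l})^{\perp}=\mathfrak{m}_0$, so the claim is equivalent to ruling out $\mathfrak{m}_0\neq 0$, in which case any $0\neq X\in\mathfrak{t}$ satisfies the hypotheses of Theorem \ref{thm:root-decomp}.

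However, there is a genuine gap at exactly the point you flag as delicate: you never produce the contradiction, and the mechanism you gesture at is not the one that works. Feeding the one-dimensionality of split root spaces into Lemma \ref{lemz} and ``reconciling'' the zero eigenvalue of $\operatorname{ad}(iX)$ on $\sum_{\beta_c\in\Delta_0}\mathfrak{g}_{\beta_c}$ with $\mathfrak{n}_h^c,\mathfrak{n}_l^c$ being nilradicals of Borel subalgebras is not an argument; it is also doubtful it can be made into one, since Lemma \ref{lemz} only constrains centralizers of root vectors attached to $\Delta_0$, while what must be excluded is $\mathfrak{t}\neq 0$, which can occur with $\Delta_m=\emptyset$ (e.g.\ $\mathfrak{m}_0$ a nonzero torus) and with no evident conflict of the type you describe; note the paper never invokes Lemma \ref{lemz} outside the proof of Theorem \ref{thm:r-sub}. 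The paper's actual contradiction uses a different, and essential, idea that is absent from your proposal: (i) one-dimensionality of the split root spaces together with Theorem \ref{thm:root-decomp} forces $S_h^i=x_{\alpha_i}+a_i\sigma(x_{\alpha_i})$ and $S_l^i=x_{\alpha_i}+b_i\sigma(x_{\alpha_i})$ with the \emph{same} leading terms; (ii) the vanishing of $\omega$ on $Z_h$ and $Z_l$ forces $|a_i|=|b_i|=1$; (iii) since $X\in\mathfrak{t}\subset\mathfrak{m}_0$, conjugating $\mathfrak{h}$ by $\exp(t_0X)\in M_0$ preserves goodness (Lemma \ref{ge}) while rotating the phases of the coefficients (Lemma \ref{lemma:ad-root}), so a suitable choice of $t_0$ makes $\operatorname{Ad}_{\exp(t_0X)}S_h^1$ proportional to $S_l^1$; consequently $Z_{\operatorname{Ad}_{\exp(t_0X)}\mathfrak{h}}+Z_l\neq Z$, contradicting Proposition \ref{prop:zh} applied to the conjugated (still good) triple. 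This unitarity-plus-phase-rotation step, which is the entire content of the both-split case, is what your proposal is missing.
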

\begin{proof}
If $\mathfrak{g}$ is split then $\mathfrak{m}_{0}= \{ 0 \} $ and the theorem follows from Lemma \ref{lemma:a+n}.

Assume that $\mathfrak{h}, \mathfrak{l}$ are both split. Recall that every root space of a split semisimple Lie algebra is one-dimensional. Therefore, by Theorem \ref{thm:root-decomp} we have for every $i, 1\leq i \leq k$
$$S_{h}^{i}= x_{\alpha_{i}} + a_{i}\sigma (x_{\alpha_{i}}), \ \ S_{l}^{i}= x_{\alpha_{i}} + b_{i}\sigma (x_{\alpha_{i}}),$$
for some $a_{i}, b_{i}\in \mathbb{C}- \{ 0 \}.$ Also for every $i$ we have
$$0=\omega (S_{h}^{i},S_{h}^{i})=B([iX,S_{h}^{i}],\tau (S_{h}^{i}))= \alpha_{i}(iX)(1-\lvert a_{i} \lvert),$$
thus $\lvert a_{i} \lvert=\lvert b_{i} \lvert=1.$ Notice that by  Lemma \ref{lemma:ad-root} below
$$\operatorname{Ad}_{\operatorname{exp}(tX)}S_{h}^{i}=e^{-i\alpha_{i}(itX)}x_{\alpha_{i}}+a_{i}e^{i\alpha_{i}(itX)}\sigma (x_{\alpha_{i}}).$$ 
By Lemma \ref{sigma}, $\alpha_{1}\in\Delta_{p}.$ If $t_{0}$ is such that $\alpha_{1}(it_{0}X)=\frac{1}{2}(\operatorname{Arg}(b_{1})-\operatorname{Arg}(b_{2}))$ then
$$e^{\alpha_{1}(it_{0}X)}\operatorname{Ad}_{\operatorname{exp}(t_{0}X)}S_{h}^{1}=S_{l}^{1}.$$
Since for every $t\in\mathbb{R},$ $\operatorname{Ad}_{\operatorname{exp}(tX)} Z=Z$ we have $Z_{\operatorname{Ad}_{\operatorname{exp}t_{0}X} \mathfrak{h}}+ Z_{l}\neq Z.$ But this contradicts Lemma \ref{ge}.
\end{proof}

\begin{lemma}\label{lemma:ad-root}
For every $x_{\alpha_{s}}\in \mathfrak{g}_{\alpha_{s}},$ $s=1,...,2k$ and every $t\in \mathbb{R}$ we have
$$\operatorname{Ad}_{\operatorname{exp}(tX)}(x_{\alpha_{s}})=e^{-i\alpha_{s}(itX)}x_{\alpha_{s}}.$$
\label{exp}
\end{lemma}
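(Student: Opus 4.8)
The plan is to reduce the statement to the elementary fact that a root vector is an eigenvector of $\operatorname{ad} X$, and then to exponentiate. Throughout, $\alpha_s$ denotes the complex root extended $\mathbb{C}$-linearly to $\mathfrak{j}^c$, and $\operatorname{Ad}_{\operatorname{exp}(tX)}$ is taken on $\mathfrak{g}^c$ (so that $\operatorname{ad} X$ is also extended $\mathbb{C}$-linearly). The only thing requiring care is the bookkeeping of the factors of $i$; there is no genuine obstacle here.

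First I would recall that $X\in\mathfrak{t}\subset\mathfrak{j}^c$ lies in the Cartan subalgebra, so by the defining property of the root space $\mathfrak{g}_{\alpha_s}$ the vector $x_{\alpha_s}$ is an eigenvector of $\operatorname{ad} X$:
$$\operatorname{ad} X (x_{\alpha_s}) = [X, x_{\alpha_s}] = \alpha_s(X)\, x_{\alpha_s}.$$
Consequently $(\operatorname{ad} X)^{n}(x_{\alpha_s}) = \alpha_s(X)^{n} x_{\alpha_s}$ for all $n\geq 0$.

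Next I would apply the standard identity $\operatorname{Ad}_{\operatorname{exp}(tX)} = \operatorname{exp}(t\operatorname{ad} X)$ and sum the exponential series term by term, which is legitimate precisely because $x_{\alpha_s}$ is an eigenvector:
$$\operatorname{Ad}_{\operatorname{exp}(tX)}(x_{\alpha_s}) = \sum_{n\geq 0}\frac{t^{n}}{n!}(\operatorname{ad} X)^{n}(x_{\alpha_s}) = \left(\sum_{n\geq 0}\frac{(t\,\alpha_s(X))^{n}}{n!}\right) x_{\alpha_s} = e^{t\,\alpha_s(X)}\, x_{\alpha_s}.$$

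Finally I would match this with the asserted exponent by a purely formal manipulation. Since $\alpha_s$ is $\mathbb{C}$-linear, $\alpha_s(itX) = it\,\alpha_s(X)$, whence $-i\,\alpha_s(itX) = (-i)(i)\,t\,\alpha_s(X) = t\,\alpha_s(X)$, so that $e^{t\alpha_s(X)} = e^{-i\alpha_s(itX)}$, which is exactly the claimed formula. It is worth remarking, for consistency with the way the lemma is used, that $iX\in i\mathfrak{t}\subset\mathfrak{j}$ forces $\alpha_s(iX)\in\mathbb{R}$, so the eigenvalue $\alpha_s(X) = -i\,\alpha_s(iX)$ is purely imaginary; hence $\operatorname{Ad}_{\operatorname{exp}(tX)}$ acts on each root space $\mathfrak{g}_{\alpha_s}$ by a scalar of unit modulus, which is precisely the feature exploited in the split case of Theorem \ref{thm:split}.
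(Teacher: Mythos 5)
Your proof is correct and follows essentially the same route as the paper's: both expand $\operatorname{Ad}_{\operatorname{exp}(tX)}=\operatorname{exp}(\operatorname{ad}_{tX})$ as a series, use that $x_{\alpha_s}$ is an $\operatorname{ad} X$-eigenvector with eigenvalue $\alpha_s(X)$ to sum it to $e^{t\alpha_s(X)}x_{\alpha_s}$, and then rewrite the exponent as $-i\alpha_s(itX)$. Your added remark that $\alpha_s(iX)\in\mathbb{R}$ makes the eigenvalue purely imaginary (hence the scalar has unit modulus) is a nice touch the paper leaves implicit, and is indeed the feature used in the proof of Theorem \ref{thm:split}.
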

\begin{proof}
For every $A,B\in \mathfrak{g}_{c}$ we have
$$\operatorname{Ad}_{\operatorname{exp}(A)}(B)=\operatorname{exp}(\operatorname{ad}_{A})(B)=B+[A,B]+\frac{1}{2!}[A,[A,B]]+... \ .$$
Therefore
$$\operatorname{Ad}_{\operatorname{exp}(tX)}(x_{\alpha_{s}})=x_{\alpha_{s}}+ [tX,x_{\alpha_{s}}]+ \frac{1}{2!}[tX,[tX,x_{\alpha_{s}}]]+...=$$
$$x_{\alpha_{s}}+\alpha_{s}(tX)x_{\alpha_{s}}+ \alpha^{2}(tX)x_{\alpha_{s}}+... = e^{\alpha_{s}(tX)}x_{\alpha_{s}}=e^{-i\alpha_{s}(itX)}x_{\alpha_{s}}.$$
\end{proof}
We see that necessarily $\mathfrak{g}=\mathfrak{h}+\mathfrak{l}$, but this contradicts the Onishchik table. Theorem \ref{thm:split} is proved.

\section{Comparison with known results}
In recent years the following results concerning standard compact Clifford-Klein forms were obtained.
\begin{theorem}[\cite{tojo}]
Let $G/H$ be a non-compact irreducible simple symmetric space which admits a standard compact Clifford-Klein form. Then either $(\mathfrak{g},\mathfrak{h})$  is contained in Table \ref{tttab1} or $H$ is compact.
\end{theorem}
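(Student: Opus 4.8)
The plan is to treat a noncompact irreducible symmetric pair $(\mathfrak{g},\mathfrak{h})$ by first putting the accompanying triple into the normal form of this paper and then separating the Onishchik and non-Onishchik regimes. Writing $\mathfrak{h}=\mathfrak{g}^{\iota}$ for an involution $\iota$ chosen to commute with the Cartan involution $\theta$, I would first replace $\mathfrak{h}$ by the sum $\mathfrak{h}'$ of its noncompact simple ideals (the discussion following Theorem \ref{koba} shows this loses nothing for the existence question), then invoke Proposition \ref{prop:good} to make both embeddings good, and record the two standing identities $d(\mathfrak{g})=d(\mathfrak{h})+d(\mathfrak{l})$ (Theorem \ref{koba}) and $\mathfrak{n}=\mathfrak{n}_{h}\oplus\mathfrak{n}_{l}$ (Lemma \ref{dobro}). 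Proposition \ref{propx} then supplies the basic dichotomy: either $\mathfrak{g}=\mathfrak{h}+\mathfrak{l}$, or there is a nonzero $X\in\mathfrak{t}$ with $X$ being $B$-orthogonal to $\mathfrak{h}+\mathfrak{l}$, which is exactly the hypothesis feeding Theorem \ref{thm:root-decomp}.

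In the Onishchik regime $\mathfrak{g}=\mathfrak{h}+\mathfrak{l}$, I would appeal to Onishchik's classification of decompositions of a simple real Lie algebra into two reductive summands and intersect that list with the constraint that $\mathfrak{h}$ be a noncompact symmetric subalgebra. This selects precisely the symmetric rows of Table \ref{tttab1} (the pairs already certified by Theorem \ref{thm:table}; note that the $\mathfrak{g}_{2(2)}$ rows, which are not symmetric, are automatically excluded), so in this regime the conclusion is immediate.

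The substance of the argument is the non-Onishchik regime, where Theorem \ref{thm:root-decomp} applies and furnishes the explicit description of the complexified restricted root spaces $\mathfrak{h}_{\gamma}^{c}$ and $\mathfrak{l}_{\gamma}^{c}$. Here I would use the Dynkin dichotomy quoted before Theorem \ref{thm:r-sub}: the semisimple symmetric subalgebra $\mathfrak{h}$ is either real R-regular or an S-subalgebra. If $\mathfrak{h}$ is a \emph{proper} real R-regular subalgebra, Theorem \ref{thm:r-sub} states outright that $G/H$ admits no compact Clifford-Klein form, contradicting the hypothesis, so this sub-case yields nothing. There remain two sub-cases not directly covered by Theorem \ref{thm:r-sub}: R-regular subalgebras of full real rank, and S-subalgebras. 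In the full-real-rank case $\mathfrak{a}_{h}=\mathfrak{a}$, so $\mathfrak{a}\subset\mathfrak{h}$, and I would combine Lemma \ref{lemma:a+n} with the action of $\iota$ on $\mathfrak{m}_{0}$ to try to show that no admissible $X\in\mathfrak{t}\subset\mathfrak{m}_{0}$ can be $B$-orthogonal to $\mathfrak{h}+\mathfrak{l}$, which would collapse the situation back into the Onishchik regime via Proposition \ref{propx}. The S-subalgebra case is genuinely finite: noncompact symmetric S-subalgebras form a short list via Berger's classification, and for each candidate I would test the dimension equality (\ref{eq2}) against the root constraints of Theorem \ref{thm:root-decomp} (equivalently Kobayashi's properness criterion on $\mathfrak{a}_{h},\mathfrak{a}_{l}$), ruling out any reductive complement $\mathfrak{l}$.

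The hard part is exactly this last step. Theorem \ref{thm:r-sub} automates the proper R-regular half completely, but the symmetric S-subalgebras must be eliminated by hand: one has to verify, case by case along Berger's list, that the simultaneous demands of properness and of the cohomological dimension equality (\ref{eq2}) leave no room for a reductive $\mathfrak{l}$ except when $(\mathfrak{g},\mathfrak{h})$ already appears in Table \ref{tttab1} or $H$ is compact. Making the full-real-rank reduction to the Onishchik regime rigorous — showing that an equal-rank symmetric $\mathfrak{h}$ cannot carve out a compact direction $X\in\mathfrak{t}$ orthogonal to $\mathfrak{h}+\mathfrak{l}$ — is the other delicate point, since it is precisely what prevents such an $\mathfrak{h}$ from escaping the classification.
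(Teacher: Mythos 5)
The first thing to note is that the paper itself does \emph{not} prove this statement: it appears in the final section (``Comparison with known results'') as a quotation of Tojo's classification \cite{tojo}, so there is no internal proof to measure your argument against. Tojo's published proof is a direct case-by-case analysis of Berger's list of irreducible symmetric pairs, using Kobayashi's properness criterion together with the dimension equality of Theorem \ref{koba}; it does not use (and, predating them, could not use) the root-space decompositions of Theorem \ref{thm:root-decomp}. So your proposal must be judged on its own merits as an attempted alternative proof built from this paper's tools.

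On those merits it is a reduction scheme, not a proof: the two places where the actual content of Tojo's theorem lives are precisely the ones you leave as stated intentions. The Onishchik regime and the proper R-regular case are fine (with the caveat that you also need completeness of Table \ref{tttab1} among Onishchik decompositions admitting a proper $L$-action --- Theorem \ref{thm:table} only asserts the direction ``every row works,'' so the converse must be extracted explicitly from \cite{on}). But in case (a), the equal-real-rank symmetric subalgebras (where indeed $\mathfrak{a}_h=\mathfrak{a}$ after conjugation, e.g.\ $\mathfrak{so}(p,q)\subset\mathfrak{su}(p,q)$), you give no mechanism for excluding a nonzero $X\in\mathfrak{t}$ orthogonal to $\mathfrak{h}+\mathfrak{l}$: Lemma \ref{lemma:a+n} only yields $(\mathfrak{h}+\mathfrak{l})^{\perp}\subset\mathfrak{m}_0$, nothing in the paper controls $\mathfrak{m}_0\cap(\mathfrak{h}+\mathfrak{l})^{\perp}$, and ``combine with the action of $\iota$ on $\mathfrak{m}_0$'' is a hope, not a step; note also that your trichotomy needs care, since in this paper ``proper'' regular refers to strictly smaller \emph{real rank}, so the complement of the case handled by Theorem \ref{thm:r-sub} is not literally ``full-real-rank R-regular plus S-subalgebras.'' In case (b), the symmetric S-subalgebras --- which constitute most of Berger's list of noncompact irreducible pairs, including the $\mathfrak{so}(n,n+1)\subset\mathfrak{so}(n+1,n+1)$ type families that actually populate Table \ref{tttab1} --- the elimination of every reductive complement $\mathfrak{l}$ is deferred wholesale to an unexecuted case-by-case verification. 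Since that verification \emph{is} Tojo's theorem, what your text actually establishes is nothing beyond the conjunction of Theorem \ref{thm:table} and Theorem \ref{thm:r-sub}: the skeleton is sound and consistent with the paper's machinery, but the proof is missing exactly where it is hard.
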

\begin{theorem}[\cite{bjt}]
Let $G/H$ be a non-compact reductive homogeneous space of a real linear simple exceptional Lie group $G.$ Then $G/H$ admits a standard compact Clifford-Klein form if and only if $H$ is compact.
\end{theorem}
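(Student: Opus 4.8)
The plan is to prove the two implications separately; the forward direction is elementary, and the converse is where the structure theory of the paper enters. If $H$ is compact, take $L=G$: a transitive action is proper precisely when its isotropy is compact, so $G$ acts properly (and trivially cocompactly) on $G/H$, and choosing a cocompact lattice $\Gamma\subset G$ produces an honest compact standard Clifford-Klein form $\Gamma\backslash G/H$. Nothing in this direction is special to exceptional $G$.

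For the converse, assume $G/H$ is non-compact, $H$ is non-compact, and $(\mathfrak{g},\mathfrak{h},\mathfrak{l})$ is a standard triple with $\mathfrak{g}$ simple exceptional. Reducing as in Section \ref{sec:prelim} I may take $\mathfrak{h},\mathfrak{l}$ semisimple and well embedded. Since $G/H$ is non-compact we have $H\neq G$, so $0<d(\mathfrak{h})<d(\mathfrak{g})$, and the identity $d(\mathfrak{g})=d(\mathfrak{h})+d(\mathfrak{l})$ of Theorem \ref{koba} gives $d(\mathfrak{l})>0$; hence $\mathfrak{l}$ is non-compact as well. By Lemma \ref{lemma:nh+nl} I may assume $\mathfrak{a}=\mathfrak{a}_{h}\oplus\mathfrak{a}_{l}$, and comparing dimensions yields
$$\textrm{rank}_{\mathbb{R}}(\mathfrak{h})+\textrm{rank}_{\mathbb{R}}(\mathfrak{l})=\textrm{rank}_{\mathbb{R}}(\mathfrak{g}).$$
As both $\mathfrak{h},\mathfrak{l}$ are non-compact semisimple, each has real rank at least $1$, so each summand is strictly less than $\textrm{rank}_{\mathbb{R}}(\mathfrak{g})$: both subalgebras are proper in the real-rank sense. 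In particular every exceptional real form of real rank one (the only such case being $\mathfrak{f}_{4(-20)}$) is excluded at once, since the rank identity cannot be met by two positive summands. Because the conditions defining a standard triple are symmetric in $\mathfrak{h}$ and $\mathfrak{l}$ (both properness and the $d$-identity are symmetric), Theorem \ref{thm:r-sub} applies to each of them, so neither can be proper real R-regular. Therefore both $\mathfrak{h}$ and $\mathfrak{l}$ must be S-subalgebras of $\mathfrak{g}$.

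The remaining and principal obstacle is then a finite verification. For each exceptional real form of real rank at least two I would enumerate the non-compact S-subalgebras using the real analogue of Dynkin's classification (\cite{FG}, built on \cite{d}), running also through their non-compact semisimple subalgebras, and test the two necessary numerical constraints — the dimension identity $d(\mathfrak{h})+d(\mathfrak{l})=d(\mathfrak{g})$ and the real-rank identity above — against every candidate pair $(\mathfrak{h},\mathfrak{l})$. I expect these two identities to be simultaneously unsolvable for every pair, because S-subalgebras of exceptional algebras are comparatively small: their values of $d$ and of $\textrm{rank}_{\mathbb{R}}$ do not add up to the required totals $d(\mathfrak{g})$ and $\textrm{rank}_{\mathbb{R}}(\mathfrak{g})$. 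Any pair surviving the numerical test would be ruled out by checking that the Kobayashi properness criterion fails, i.e.\ that the Weyl-orbit cones of $\mathfrak{a}_{h}$ and $\mathfrak{a}_{l}$ intersect nontrivially even though $\mathfrak{a}=\mathfrak{a}_{h}\oplus\mathfrak{a}_{l}$. The delicate bookkeeping is to ensure that non-maximal S-subalgebras are covered (it suffices to descend through the chains beneath the maximal S-subalgebras) and to treat each of $E_{6},E_{7},E_{8},F_{4},G_{2}$ separately, since their S-subalgebra structures differ.
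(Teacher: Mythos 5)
Your forward direction and the rank-one exclusion of $\mathfrak{f}_{4(-20)}$ are fine, but the converse has two genuine gaps. First, the reduction ``neither $\mathfrak{h}$ nor $\mathfrak{l}$ is proper real R-regular, therefore both are S-subalgebras'' conflates two different notions of properness. Theorem \ref{thm:r-sub} only excludes subalgebras contained in a regular subalgebra $\mathfrak{r}$ with $\textrm{rank}_{\mathbb{R}}(\mathfrak{r})<\textrm{rank}_{\mathbb{R}}(\mathfrak{g})$, whereas the R/S dichotomy of \cite{d} and \cite{FG} that you invoke splits semisimple subalgebras according to containment in regular subalgebras that are merely proper as subalgebras. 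The complement of ``proper real R-regular'' is therefore strictly larger than the class of S-subalgebras of $\mathfrak{g}$: a semisimple $\mathfrak{h}$ all of whose regular overalgebras have full real rank (for instance an S-subalgebra of a full-real-rank regular subalgebra) is touched neither by Theorem \ref{thm:r-sub} nor by your S-subalgebra enumeration. Your rank identity $\textrm{rank}_{\mathbb{R}}(\mathfrak{h})+\textrm{rank}_{\mathbb{R}}(\mathfrak{l})=\textrm{rank}_{\mathbb{R}}(\mathfrak{g})$ constrains the ranks of $\mathfrak{h}$ and $\mathfrak{l}$ themselves, not the ranks of the regular subalgebras containing them, so it does not close this hole; your case analysis would have to cover this intermediate class as well. (You should also dispose of the Onishchik case $\mathfrak{g}=\mathfrak{h}+\mathfrak{l}$ explicitly --- no exceptional algebra occurs in \cite{on} or Table \ref{tttab1} --- before working in the non-Onishchik regime where Theorem \ref{thm:root-decomp} applies.)

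Second, and decisively: the statement you are proving is quoted in this paper from \cite{bjt} and is not proved here at all, and in \cite{bjt} the content of the theorem is precisely the ``finite verification'' that you defer with ``I expect these two identities to be simultaneously unsolvable.'' That expectation is not an argument, and it is not even accurate as stated: the enumeration in \cite{bjt} is a substantial computer-assisted classification (hence its appearance in Math.\ Comp.) of the noncompact semisimple subalgebras of each exceptional real form up to conjugacy, in which the dimension identity $d(\mathfrak{g})=d(\mathfrak{h})+d(\mathfrak{l})$ of Theorem \ref{koba} alone does not eliminate all candidate pairs; surviving pairs must be killed through Kobayashi's properness criterion (in practice via comparison of the little Weyl-group orbits of $\mathfrak{a}_h$ and $\mathfrak{a}_l$, i.e.\ a-hyperbolic data), and none of this bookkeeping can be waved away by the heuristic that S-subalgebras are ``comparatively small.'' So your proposal is a plausible plan whose principal step is asserted rather than carried out, and even as a plan it searches too small a class of subalgebras; note finally that using Theorem \ref{thm:r-sub} here is anachronistic relative to \cite{bjt}, though not circular, since its proof does not rely on that paper.
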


Applying Theorems \ref{thm:r-sub} and \ref{thm:split} we obtain the following examples of homogeneous spaces without standard compact Clifford-Klein forms. 
\begin{corollary}\label{cor:split}
Let $G$ be a linear connected Lie group whose  Lie algebra is one of the following
$$\mathfrak{sl}(n,\mathbb{R}),n>1,\mathfrak{so}(n,n),n=3,5,7,  n>9,$$ 
$$\mathfrak{so}(n,n+1), n=2, n>3,$$
$$\mathfrak{sp}(n,\mathbb{R}),n>1.$$
 Choose any reductive subgroup $H\subset G$ such that $G/H$ is non-compact. Then $G/H$ admits a standard compact Clifford-Klein form if and only if $H$ is compact. 
\end{corollary}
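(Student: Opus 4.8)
The plan is to deduce the corollary from Theorem \ref{thm:split}, the compactness criterion of Theorem \ref{koba}, and the reduction to the non-compact semisimple part $\mathfrak{h}'$ of $\mathfrak{h}$ recorded in Subsection \ref{subsec:n=n1+n2}. The first step is to record that every algebra in the list is split and simple: $\mathfrak{sl}(n,\mathbb{R})$, $\mathfrak{so}(n,n)$, $\mathfrak{so}(n,n+1)$ and $\mathfrak{sp}(n,\mathbb{R})$ are the split real forms of $A_{n-1}$, $D_n$, $B_n$ and $C_n$ respectively, and they are simple throughout the stated parameter ranges.

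For the necessity direction, suppose $G/H$ carries a standard compact Clifford-Klein form, given by a triple $(\mathfrak{g},\mathfrak{h},\mathfrak{l})$, and assume towards a contradiction that $H$ is non-compact. Since the center of $H$ is compact by \cite{bela}, non-compactness forces the sum $\mathfrak{h}'$ of the non-compact simple ideals of $\mathfrak{h}$ to be non-trivial, and by the reduction in Subsection \ref{subsec:n=n1+n2} the triple $(\mathfrak{g},\mathfrak{h}',\mathfrak{l})$ still induces a standard compact Clifford-Klein form with $\mathfrak{h}'$ semisimple and non-zero. Since $\mathfrak{g}$ is split simple, Theorem \ref{thm:split} then forces $(\mathfrak{g},\mathfrak{h}')$ to appear in Table \ref{tttab1}. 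The heart of the argument is the observation that the only split entries of that table are $\mathfrak{so}(3,4)$, $\mathfrak{so}(4,4)$ and $\mathfrak{so}(8,8)$, of types $B_3$, $D_4$ and $D_8$: indeed $\mathfrak{so}(p,q)$ is split exactly when $|p-q|\le 1$, and no split algebra of type $A$ or $C$ occurs in the table at all. It then remains to check, series by series, that no algebra in the list is of one of these three types; the stated ranges are exactly those that omit $n=4,8$ for $\mathfrak{so}(n,n)$ and $n=3$ for $\mathfrak{so}(n,n+1)$, together with the small cases of $A_{n-1}$ and $C_n$, after accounting for the low-dimensional isomorphisms $\mathfrak{so}(1,2)\cong\mathfrak{sl}(2,\mathbb{R})\cong\mathfrak{sp}(1,\mathbb{R})$, $\mathfrak{so}(2,3)\cong\mathfrak{sp}(2,\mathbb{R})$ and $\mathfrak{so}(3,3)\cong\mathfrak{sl}(4,\mathbb{R})$. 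This contradiction shows that $H$ must be compact.

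For the sufficiency direction, assume $H$ is compact and argue directly. A transitive action is proper precisely when the isotropy subgroup is compact, so $G$ acts properly and (being transitive) cocompactly on $G/H$; taking $\mathfrak{l}=\mathfrak{g}$ one has $d(\mathfrak{g})=d(\mathfrak{h})+d(\mathfrak{l})$ because $d(\mathfrak{h})=0$, and Theorem \ref{koba} shows that the triple $(G,H,G)$ induces a standard compact Clifford-Klein form, realized by $\Gamma\backslash G/H$ for any torsion-free cocompact lattice $\Gamma\subset G$. I expect the type-by-type verification against Table \ref{tttab1} in the necessity direction to be the main obstacle, as it is sensitive to the low-dimensional coincidences among the split classical algebras; for those $\mathfrak{h}'$ that happen to be R-regular one may instead invoke Theorem \ref{thm:r-sub} as an independent cross-check.
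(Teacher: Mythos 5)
Your proof is correct and takes essentially the same route as the paper, whose own proof is the one-line remark that the corollary is ``a direct application of Theorems \ref{thm:r-sub} and \ref{thm:split}'': you apply Theorem \ref{thm:split} after reducing to the non-compact semisimple part $\mathfrak{h}'$, observe that the only split simple $\mathfrak{g}$ appearing in Table \ref{tttab1} are $\mathfrak{so}(3,4)$, $\mathfrak{so}(4,4)$, $\mathfrak{so}(8,8)$, and supply the standard converse ($L=G$ acts properly and cocompactly when $H$ is compact), all of which the paper leaves implicit. One harmless inaccuracy: the stated range for $\mathfrak{so}(n,n)$ omits $n=6,9$ as well as $n=4,8$, so the ranges are not ``exactly'' the complement of the bad types, but since the corollary asserts its conclusion only for the listed algebras this over-restriction does not affect your argument.
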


\begin{corollary}\label{cor:su}
Let $G$ be a linear connected Lie group whose Lie algebra is $\mathfrak{su}(n,m),$ $m\geq n>2.$ Let $H\subset G$ be a reductive subgroup whose Lie algebra  is a subalgebra of a proper regular subalgebra $\mathfrak{su}(n-1,m-1)$ such that $G/H$ is non-compact. Then $G/H$ admits a standard compact Clifford-Klein form if and only if $H$ is compact.
\end{corollary}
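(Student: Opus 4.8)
The plan is to deduce the statement from Theorem \ref{thm:r-sub}, once we verify that $\mathfrak{su}(n-1,m-1)$ is genuinely a proper regular subalgebra of $\mathfrak{su}(n,m)$, and combine this with the standard reduction of a reductive $\mathfrak{h}$ to its non-compact semisimple part. The \emph{if} direction is immediate: when $H$ is compact, the triple $(G,H,G)$ is a standard compact Clifford-Klein form, since $G$ acts transitively (hence co-compactly) on $G/H$ and this action is proper precisely because $H$ is compact; equivalently $d(G)=d(H)+d(G)$ as $d(H)=0$. So the content lies in the \emph{only if} direction.

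First I would set up a concrete realization. Write $\mathfrak{su}(n,m)$ in a basis adapted to a maximal $\mathbb{R}$-split abelian subalgebra $\mathfrak{a}\subset\mathfrak{p}$, so that $\mathfrak{a}$ consists of the real diagonal matrices acting on the $n$ hyperbolic planes of the Hermitian form of signature $(n,m)$; here $\operatorname{rank}_{\mathbb{R}}\mathfrak{su}(n,m)=n$. Embedding $\mathfrak{su}(n-1,m-1)$ as the corner block fixing one hyperbolic pair (equivalently, as the subalgebra attached to the restricted root subsystem of type $BC_{n-1}$, or $C_{n-1}$ when $m=n$, spanned by $e_{2},\dots,e_{n}$), the bracket of any diagonal element of $\mathfrak{a}$ with a matrix supported on this block stays supported on the block. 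Hence $\mathfrak{a}$ normalizes $\mathfrak{su}(n-1,m-1)$, so the latter is regular; and since $\operatorname{rank}_{\mathbb{R}}\mathfrak{su}(n-1,m-1)=\min(n-1,m-1)=n-1<n$, it is a \emph{proper} regular subalgebra. By Proposition \ref{prop:isomorphism} we may assume $\mathfrak{h}$ is embedded inside this copy of $\mathfrak{su}(n-1,m-1)$.

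Next I would run the reduction to the semisimple case. Suppose $G/H$ admits a standard compact Clifford-Klein form. By \cite{bela} the center of $H$ is compact, and by Theorem \ref{koba} the triple $(G,H',L)$ is again a standard compact Clifford-Klein form, where $\mathfrak{h}'$ is the sum of all non-compact simple ideals of $\mathfrak{h}$. Now $\mathfrak{h}'\subseteq\mathfrak{h}\subseteq\mathfrak{su}(n-1,m-1)$, so if $\mathfrak{h}'\neq 0$ it is a semisimple subalgebra contained in a proper regular subalgebra, i.e.\ a proper real R-regular subalgebra, and Theorem \ref{thm:r-sub} forbids a standard compact Clifford-Klein form on $G/H'$, a contradiction. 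Therefore $\mathfrak{h}'=0$, so $\mathfrak{h}$ has only compact simple ideals; combined with the compactness of the center of $H$ this forces $\mathfrak{h}$ to be of compact type and $H$ to be compact.

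The only genuinely non-formal step is the verification that the natural corner embedding of $\mathfrak{su}(n-1,m-1)$ is simultaneously regular (normalized by the chosen $\mathfrak{a}$) and proper (a strict drop of real rank); everything else is bookkeeping with the reduction lemmas. The mild care required is to match restricted-root multiplicities --- the short root multiplicity $2(m-n)$ of $\mathfrak{su}(n,m)$ equals the short root multiplicity $2((m-1)-(n-1))$ of the subalgebra --- confirming that the block really is $\mathfrak{su}(n-1,m-1)$ and not some other real form attached to $BC_{n-1}$.
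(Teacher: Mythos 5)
Your proposal is correct and follows essentially the same route as the paper: the corollary is a direct application of Theorem \ref{thm:r-sub}, after passing from $\mathfrak{h}$ to its sum of non-compact simple ideals $\mathfrak{h}'$ (using Theorem \ref{koba} and the compactness of the center of $H$) and noting that $\mathfrak{h}'\subset\mathfrak{su}(n-1,m-1)$ makes it a proper real R-regular subalgebra. The only difference is cosmetic: you verify regularity directly by exhibiting an adapted $\mathfrak{a}$ that normalizes (indeed centralizes, modulo its intersection with) the corner copy of $\mathfrak{su}(n-1,m-1)$, whereas the paper deduces regularity of the real embedding from regularity of the complexified embedding compatible with the Cartan involution; both verifications are immediate.
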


\begin{corollary}\label{cor:so}
Let $G$ be a linear connected Lie group whose Lie algebra is $\mathfrak{so}(n,m),$ $m+1> n>8.$ Let $H\subset G$ be a reductive subgroup whose Lie algebra is  a subalgebra of a proper regular subalgebra $\mathfrak{so}(n-1,m-1)$ such that $G/H$ is non-compact. Then $G/H$ admits a standard compact Clifford-Klein form if and only if $H$ is compact.
\end{corollary}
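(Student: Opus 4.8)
The plan is to obtain the corollary as a direct consequence of Theorem~\ref{thm:r-sub}, after reducing the reductive group $H$ to its non-compact semisimple part. The one structural input I must secure first is that the prescribed copy of $\mathfrak{so}(n-1,m-1)$ really is a \emph{proper regular} subalgebra of $\mathfrak{g}=\mathfrak{so}(n,m)$ (as the hypothesis already asserts). Since $m+1>n$ gives $m\ge n$, I write the signature-$(n,m)$ space in Witt form $V=H_1\perp\cdots\perp H_n\perp W$ with each $H_i$ a hyperbolic plane and $W$ negative definite of dimension $m-n$, and take for $\mathfrak{a}$ the maximal abelian subspace of $\mathfrak{p}$ acting by the standard boosts on the $H_i$. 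The subalgebra $\mathfrak{so}(n-1,m-1)=\mathfrak{so}(H_1\perp\cdots\perp H_{n-1}\perp W)$ acts as the identity on $H_n$; the boost attached to $H_n$ therefore commutes with it, while the remaining boosts lie inside it, so $\mathfrak{a}$ normalizes $\mathfrak{so}(n-1,m-1)$ and the subalgebra is regular. Its real rank equals $n-1<n=\mathrm{rank}_{\mathbb{R}}\,\mathfrak{g}$, so it is proper. Finally $n>8$ forces $n+m>16$, whence $\mathfrak{g}$ is simple (in particular none of the algebras appearing in Table~\ref{tttab1}), so Theorem~\ref{thm:r-sub} is available.

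For the ``only if'' implication, suppose $G/H$ carries a standard compact Clifford-Klein form given by a triple $(G,H,L)$, and assume toward a contradiction that $H$ is non-compact. By \cite{bela} the center of $H$ is compact, so non-compactness of $H$ forces the sum $\mathfrak{h}'$ of the non-compact simple ideals of $\mathfrak{h}$ to be a non-zero (hence non-compact) semisimple subalgebra. As recorded in the Preliminaries, $L$ still acts properly on $G/H'$ and $d(\mathfrak{g})=d(\mathfrak{h}')+d(\mathfrak{l})$, so by Theorem~\ref{koba} the triple $(G,H',L)$ is again a standard compact Clifford-Klein form; in particular $G/H'$ admits a compact Clifford-Klein form. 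However $\mathfrak{h}'\subseteq\mathfrak{h}\subseteq\mathfrak{so}(n-1,m-1)$ is a semisimple subalgebra of a proper regular subalgebra, hence a proper real R-regular subalgebra of $\mathfrak{g}$ by the first paragraph. Theorem~\ref{thm:r-sub} then asserts that $G/H'$ admits \emph{no} compact Clifford-Klein form, a contradiction. Therefore $H$ must be compact.

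For the ``if'' implication I would simply take $L=G$. When $H$ is compact the transitive action of $G$ on $G/H$ is proper (the $G$-action on $G/H$ is proper precisely when the isotropy $H$ is compact) and trivially cocompact, and the balance condition $d(\mathfrak{g})=d(\mathfrak{h})+d(\mathfrak{g})$ of Theorem~\ref{koba} holds because $d(\mathfrak{h})=0$. Passing to a torsion-free cocompact lattice $\Gamma\subset G$, which exists since $G$ is semisimple, then yields a compact standard Clifford-Klein form $\Gamma\backslash G/H$.

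The only genuine obstacle is the first paragraph: exhibiting a maximal split torus $\mathfrak{a}$ that normalizes $\mathfrak{so}(n-1,m-1)$ and verifying the drop in real rank, which is exactly where the shape of $\mathfrak{h}$ and the inequalities $m+1>n>8$ enter. Once $\mathfrak{so}(n-1,m-1)$ is identified as a proper regular subalgebra, the remainder is formal: the semisimplification $(G,H,L)\rightsquigarrow(G,H',L)$ from the Preliminaries transports the problem to a proper real R-regular subalgebra, and Theorem~\ref{thm:r-sub}---which already packages the root-space analysis of Theorem~\ref{thm:root-decomp}---closes the argument without any further root-system computation.
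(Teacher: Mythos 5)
Your proposal is correct and takes essentially the same route as the paper, whose proof is exactly a direct application of Theorem \ref{thm:r-sub} after the standard reduction (recorded in the Preliminaries) from the reductive $H$ to its non-compact semisimple part $\mathfrak{h}'$, the compact-$H$ direction being classical. The only difference is in the regularity check: you exhibit a maximal split $\mathfrak{a}$ explicitly via a real Witt decomposition and verify that it normalizes $\mathfrak{so}(n-1,m-1)$ and that the real rank drops, whereas the paper deduces regularity of the real embedding from regularity of the complexified embedding $\mathfrak{h}^c\subset\mathfrak{g}^c$ --- both verifications are valid and equally short.
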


\begin{corollary}\label{cor:sp}
Let $G$ be a linear connected Lie group with Lie algebra $\mathfrak{sp}(n,m),$ $m\geq n>1.$ Let $H\subset G$ be a reductive subgroup corresponding to a subalgebra of a proper regular subalgebra $\mathfrak{sp}(n-1,m-1)$ such that $G/H$ is non-compact. Then $G/H$ admits a standard compact Clifford-Klein form if and only if $H$ is compact.
\end{corollary}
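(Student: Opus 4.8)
The plan is to deduce this corollary from Theorem \ref{thm:r-sub}, together with the reduction to the semisimple non-compact part carried out in the Preliminaries. The backward implication is immediate: if $H$ is compact, then the left $G$-action on $G/H$ is proper (a transitive action of $G$ on $G/H$ is proper exactly when $H$ is compact) and trivially cocompact, so taking $L=G$, which is reductive, and any uniform lattice $\Gamma\subset G$ produces a compact standard quotient $\Gamma\backslash G/H$. It therefore remains to rule out a non-compact $H$.

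First I would verify the one algebra-specific assertion hidden in the statement, namely that $\mathfrak{sp}(n-1,m-1)$ really is a proper regular subalgebra of $\mathfrak{g}=\mathfrak{sp}(n,m)$. Realizing $\mathfrak{g}$ on $V=\mathbb{H}^{n+m}$ equipped with a quaternionic Hermitian form of signature $(n,m)$, I would pick a Witt basis exhibiting $n$ hyperbolic planes, so that a maximal $\mathbb{R}$-split torus $\mathfrak{a}$, of dimension $\operatorname{rank}_{\mathbb{R}}\mathfrak{sp}(n,m)=\min(n,m)=n$, acts diagonally on these planes. I would then place $\mathfrak{sp}(n-1,m-1)$ as the subalgebra acting on the orthogonal complement $V'$ of one hyperbolic plane $P$ and acting as zero on $P$. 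This gives $\mathfrak{a}=\mathfrak{a}'\oplus\mathbb{R}H$, where $\mathfrak{a}'\subset\mathfrak{sp}(V')$ is its own split torus and $H$ generates the hyperbolic action on $P$. Since $\mathfrak{a}'$ lies inside $\mathfrak{sp}(V')$ and $H$ annihilates $V'$, hence centralizes $\mathfrak{sp}(V')$, we obtain $[\mathfrak{a},\mathfrak{sp}(n-1,m-1)]\subset\mathfrak{sp}(n-1,m-1)$; thus $\mathfrak{a}$ normalizes the subalgebra and it is regular, while $\operatorname{rank}_{\mathbb{R}}\mathfrak{sp}(n-1,m-1)=n-1<n$ shows it is proper.

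Next I would run the reduction to the semisimple non-compact part. Suppose, for contradiction, that $H$ is non-compact and that $G/H$ admits a standard compact Clifford-Klein form, witnessed by a reductive $L$ with $d(\mathfrak{g})=d(\mathfrak{h})+d(\mathfrak{l})$. Let $\mathfrak{h}'$ be the sum of the non-compact simple ideals of $\mathfrak{h}$; as recorded in the Preliminaries, the triple $(G,H',L)$ again yields a standard compact Clifford-Klein form. Because $H$ is non-compact and its center is compact by \cite{bela}, the subalgebra $\mathfrak{h}'$ is non-zero and semisimple, and $\mathfrak{h}'\subset\mathfrak{h}\subset\mathfrak{sp}(n-1,m-1)$. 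By the previous paragraph $\mathfrak{h}'$ is contained in a proper regular subalgebra, so $\mathfrak{h}'$ is a proper real R-regular subalgebra of $\mathfrak{g}$. Theorem \ref{thm:r-sub} then forbids any compact Clifford-Klein form of $G/H'$, contradicting the standard one just produced. Hence $H$ must be compact.

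I expect the only genuinely delicate point to be the normalization check in the second paragraph: one must choose the realization so that the extra $\mathbb{R}$-split direction $H$ centralizes the entire copy of $\mathfrak{sp}(n-1,m-1)$, which forces the subalgebra to act on the orthogonal complement of a full hyperbolic plane rather than of a single isotropic line. The real-rank bookkeeping $\operatorname{rank}_{\mathbb{R}}\mathfrak{sp}(p,q)=\min(p,q)$ and the passage through $\mathfrak{h}'$ are routine, and the identical argument, using $\operatorname{rank}_{\mathbb{R}}\mathfrak{su}(p,q)=\operatorname{rank}_{\mathbb{R}}\mathfrak{so}(p,q)=\min(p,q)$, simultaneously settles Corollaries \ref{cor:su} and \ref{cor:so}, so no new idea beyond this rank computation is required.
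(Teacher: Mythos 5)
Your proof is correct and takes essentially the paper's route: the corollary is a direct application of Theorem \ref{thm:r-sub}, combined with the reduction to the semisimple non-compact part $\mathfrak{h}'$ already recorded in the Preliminaries, and the trivial backward direction with $L=G$. The only difference is cosmetic: you check regularity of $\mathfrak{sp}(n-1,m-1)\subset\mathfrak{sp}(n,m)$ directly in a quaternionic Witt-basis realization (usefully exhibiting the element of $\mathfrak{a}$ centralizing the subalgebra, which is precisely what the proof of Theorem \ref{thm:r-sub} exploits), whereas the paper verifies it by complexification, observing that regularity of $\mathfrak{h}^c\subset\mathfrak{g}^c$ implies regularity of the compatible real embedding.
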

\noindent The proof of Corollaries \ref{cor:split}-\ref{cor:sp} is a direct application of Theorems \ref{thm:r-sub} and \ref{thm:split}. Table 4 in \cite{ov} yields all split real forms of complex simple Lie algebras. The regularity of subalgebras in these corollaries can be checked directly. In fact, one can easily notice that if $\mathfrak{h}^c\subset\mathfrak{g}^c$ is a regular subalgebra, and $\mathfrak{h}\subset\mathfrak{g}$ is an embedding of the real forms compatible with the Cartm involution, then the regularity of $\mathfrak{h}^c$ in $\mathfrak{g}^c$ implies the regularity of the real embedding. The regularity of the complexified embeddings in Corollaries \ref{cor:su}-\ref{cor:sp} is clear (one can consult \cite{ov} for the root description of regular subalgebras in the complex case).
\vskip6pt
\noindent {\bf Acknowledgment.} The first named author was supported by the National Science Center (Poland), grant NCN 2018/31/D/ST1/00083. The second named author acknowledges the support of the National Science Center (Poland), grant NCN 2018/31/B/ST1/00053.

\vskip6pt
Faculty of Mathematics and Computer Science
\vskip6pt
\noindent University of Warmia and Mazury
\vskip6pt
\noindent S\l\/oneczna 54, 10-710 Olsztyn, 
Poland
\vskip6pt
e-mail adresses:
\vskip6pt
mabo@matman.uwm.edu.pl (MB)
\vskip6pt
tralle@matman.uwm.edu.pl (AT)

\end{document}